\numberwithin{equation}{section}
\renewcommand{\Re}{\operatorname{Re}}
\renewcommand{\Im}{\operatorname{Im}}
\DeclareMathOperator{\sgn}{sgn}
\newcommand{\ci}{\mathrm{i}}
\newcommand{\ce}{\mathrm{e}}
\newcommand{\cd}{\mathrm{d}}
\DeclarePairedDelimiter{\abs}{\lvert}{\rvert} 
\DeclarePairedDelimiter{\norm}{\lVert}{\rVert} 
\DeclarePairedDelimiter{\rbra}{(}{)} 
\DeclarePairedDelimiter{\cbra}{\{}{\}} 
\DeclarePairedDelimiter{\sbra}{[}{]} 
\newcommand{\bN}{\ensuremath{\mathbb{N}}}
\newcommand{\bO}{\ensuremath{\mathbb{O}}}
\newcommand{\bP}{\ensuremath{\mathbb{P}}}
\newcommand{\bQ}{\ensuremath{\mathbb{Q}}}
\newcommand{\bR}{\ensuremath{\mathbb{R}}}
\newcommand{\cD}{\ensuremath{\mathcal{D}}}
\newcommand{\cF}{\ensuremath{\mathcal{F}}}
\newcommand{\cH}{\ensuremath{\mathcal{H}}}
\newcommand{\cL}{\ensuremath{\mathcal{L}}}
\newcommand{\cP}{\ensuremath{\mathcal{P}}}
\theoremstyle{plain}
\newtheorem{Thm}{Theorem}[section]
\newtheorem{Lem}[Thm]{Lemma}
\newtheorem{Prop}[Thm]{Proposition}
\newtheorem{Cor}[Thm]{Corollary}
\theoremstyle{definition}
\newtheorem{Rem}[Thm]{Remark}
\theoremstyle{remark}
\newtheorem{Eg}[Thm]{Example}
\theoremstyle{plain}
\newtheorem*{Thm*}{Theorem}
\newtheorem*{Lem*}{Lemma}
\newtheorem*{Prop*}{Proposition}
\newtheorem*{Cor*}{Corollary}
\newtheorem*{Conj*}{Conjecture}
\theoremstyle{definition}
\newtheorem*{Ass*}{Assumption}
\newtheorem*{Def*}{Definition}
\newtheorem*{Rem*}{Remark}
\theoremstyle{remark}
\newtheorem*{Eg*}{Example}
\renewcommand\section{\@startsection {section}{1}{\z@}%
                                   {-3.5ex \@plus -1ex \@minus -.2ex}%
                                   {2.3ex \@plus.2ex}%
                                   {\normalfont\large\bf}}
\renewcommand\subsection{\@startsection {subsection}{1}{\z@}%
                                   {-3.5ex \@plus -1ex \@minus -.2ex}%
                                   {2.3ex \@plus.2ex}%
                                   {\normalfont\normalsize\bf}}
\title{\textbf{Local time penalizations
with various clocks for L\'{e}vy processes}\footnote{
This research was supported by RIMS and by ISM.}}
\author{Shosei Takeda\footnote{Rakunan High School, Kyoto, Japan.}
\footnote{The research of this author was supported by
JSPS Open Partnership Joint Research Projects grant no. JPJSBP120209921.}
\quad and \quad Kouji Yano\footnote{Graduate School of Science, Kyoto University, Japan.}
\footnotemark[3]
\footnote{The research of this author was supported by
JSPS KAKENHI grant no.'s JP19H01791 and JP19K21834.}}
\date{}
\begin{document}
\maketitle
\begin{abstract}
  Several long-time limit theorems
  of one-dimensional L\'{e}vy processes
  weighted and normalized
  by functions of the local time
  are studied.
  The long-time limits are taken via certain families of random times, called clocks:
  exponential clock, hitting time clock, two-point hitting time clock
  and inverse local time clock.
  The limit measure can be characterized via a certain martingale
  expressed by an invariant function for the process
  killed upon hitting zero.
  The limit processes may differ according to the choice of the clocks
  when the original L\'{e}vy process is recurrent and of finite variance.
\end{abstract}
{\small Keywords and phrases: one-dimensional L\'{e}vy process;
limit theorem; penalization; conditioning \\
MSC 2020 subject classifications: 60F05 (60G44; 60G51)}
\section{Introduction}
Roynette--Vallois--Yor
(\cite{MR2229621,MR2253307} see also~\cite{MR2261065,MR2504013})
have studied the limit distribution for a Brownian motion,
which they called a \textit{penalization problem},
as follows.
Let \(B=(B_t,t\ge 0)\) be a standard Brownian motion and
\(L=(L_t,t\ge 0)\) denote its local time at \(0\). Then, for
any positive integrable function \(f\)
and any bounded adapted functional \(F_t\), it holds that
\begin{align}
  \lim_{s\to\infty} \frac{\bP\sbra{F_t f(L_s)}}{\bP\sbra{f(L_s)}}
  = \bP\sbra*{F_t \frac{M_t}{M_0}} \eqqcolon \bQ\sbra{F_t},
\end{align}
where \(M=(M_t,t\ge 0)\) is the martingale given by
\begin{align}
  M_t = f(L_t)\abs{B_t} + \int_0^\infty f(L_t + u)\, \cd u,
  \quad t\ge 0.
\end{align}
Under the penalized probability
measure \(\bQ\), the total local time \(L_{\infty}\) is finite, and
in fact, a sample path behaves as the concatenation of a Brownian bridge
and a three-dimensional Bessel process; see~\cite{MR2229621}.
In particular, \(\bQ\) is singular to \(\bP\).

This result for a Brownian motion was
generalized to many other processes.
In particular, we refer to
Debs~\cite{MR2599215} for random walks,
Najnudel--Roynette--Yor~\cite{MR2528440} for Markov chains and Bessel processes,
Yano--Yano--Yor~\cite{MR2552915} for symmetric stable processes,
Salminen--Vallois~\cite{MR2540855} and Profeta~\cite{MR2760742,MR2968676} for linear
diffusions.
Most of these results were obtained basically under the assumption of some
regular variation condition.
Profeta--Yano--Yano~\cite{MR3909919} developed a general theory for
one-dimensional diffusions by adopting a random clock approach.
They studied the long-time limit of the form
\begin{align}
  \lim_{\tau\to\infty}\frac{\bP\sbra{F_t f(L_\tau)}}{\bP\sbra{f(L_\tau)}},
\end{align}
where \(\tau=(\tau_\lambda)\) is a certain parametrized
family of random times, which they called a clock.
Such a random clock approach already
appeared in the problem of conditioning to avoid zero,
which is a special case of our penalization with
\(f(u)=1_{\cbra{u=0}}\), or in the problem of conditioning
to stay positive/negative.
For example, we refer to Knight~\cite{MR253424} for Brownian motions,
Chaumont~\cite{MR1419491}, Chaumont--Doney~\cite{MR2164035,MR2375597}
and Doney~\cite[Section 8]{MR2126962,MR3155252}
for L\'{e}vy processes conditioned to stay positive,
Yano--Yano~\cite{MR3444297} for diffusions and
Pant\'{\i}~\cite{MR3689384} for L\'{e}vy processes conditioned to avoid zero.

Let \(X=(X_t,t\ge 0)\) be a one-dimensional L\'{e}vy process and
let \(T_A\) denote the hitting time of a Borel
set \(A \subset \bR\) for \(X\), i.e.,
\begin{align}\label{eq:T_A}
  T_A = \inf\cbra{t>0\colon X_t \in A}
\end{align}
and we write \(T_a = T_{\cbra{a}}\)
simply for the hitting time of a point \(a\in\bR\).
Let \((\eta^a_u)\) denote the right-continuous inverse of
the local time at a point \(a\in \bR\).
We adopt the random clock approach for the following four clocks:
\begin{enumerate}
  \item exponential clock: \(\tau=(\bm{e}_q)\) with \(q\to 0+\);
  \item hitting time clock: \(\tau=(T_a)\) with
        \(a\to\pm\infty\);\label{item:hitting-time-clocks}
  \item two-point hitting time clock: \(\tau=(T_{a}\wedge T_{-b})\) with
        \(a\to\infty\) and \(b\to\infty\);\label{item:two-point-hitting}
  \item inverse local time clock: \(\tau=(\eta^a_u)\)
        with \(a\to\pm\infty\) or with \(u\to\infty\).
\end{enumerate}

\subsection{Main results}\label{Subsec:main-results}
Let \((X, \bP_x)\) denote the canonical representation
of a L\'{e}vy process starting from \(x\)
on the c\`{a}dl\`{a}g path space \(\cD\)
and set \(\bP=\bP_0\).
For \(t\ge 0\), we denote by \(\cF_t^X = \sigma(X_s, 0\le s\le t)\) the natural
filtration of \(X\) and write \(\cF_t = \bigcap_{s>t} \cF_s^X\).
We have
\begin{align}
  \bP\sbra{\ce^{\ci\lambda X_t}} = \ce^{-t \varPsi(\lambda)},
  \quad t \ge 0, \,\lambda \in \bR,
\end{align}
where \(\varPsi(\lambda)\) denotes the characteristic exponent of \(X\)
given by the L\'{e}vy--Khintchine formula
\begin{align}
  \varPsi(\lambda)
  = \ci v \lambda
  + \frac{1}{2} \sigma^2 \lambda^2
  + \int_\bR \rbra*{1 - \ce^{\ci \lambda x} + \ci \lambda x 1_{\cbra{\abs{x}< 1}}}
  \nu(\cd x)
\end{align}
for some constants \(v \in \bR\) and \(\sigma \ge 0\)
and some measure \(\nu\) on \(\bR\)
(called the L\'{e}vy measure)
which satisfies \(\nu(\cbra{0})=0\) and
\begin{align}
  \int_\bR \rbra*{x^2 \wedge 1} \nu(\cd x) < \infty.
\end{align}
We denote the real and imaginary parts of \(\varPsi(\lambda)\) by
\begin{align}
  \theta(\lambda) & = \Re \varPsi(\lambda)
  = \frac{1}{2} \sigma^2 \lambda^2
  + \int_\bR \rbra*{1 - \cos \lambda x} \nu(\cd x),\label{eq:theta} \\
  \omega(\lambda) & = \Im \varPsi(\lambda)
  = v \lambda
  + \int_\bR \rbra*{\lambda x 1_{\cbra{\abs{x}< 1}} - \sin \lambda x} \nu(\cd
  x)\label{eq:omega}.
\end{align}
Note that \(\theta(\lambda) \ge 0\) for \(\lambda \in \bR\),
\(\theta(\lambda)\) is even and \(\omega(\lambda)\) is odd.
For more details of the notation of this section,
see Section~\ref{Sec:preliminaries}.
Throughout this paper except
Sections~\ref{Sec:preliminaries},~\ref{Sec:trans} and~\ref{Sec:mart},
we always assume
\((X,\bP)\) is recurrent, i.e.,
\begin{align}\label{eq:recurrent}
  \bP\sbra*{\int_0^\infty 1_{\cbra{\abs{X_t-a}<\varepsilon}}\, \cd t}
  =\infty, \qquad\text{for all \(a\in\bR\) and \(\varepsilon>0\),}
\end{align}
 and assume the following:
\begin{enumerate}[label=\textbf{(\Alph*)}]
  \item For each \(q > 0\), it holds that
        \begin{align}
          \int_0^\infty
          \abs*{\frac{1}{q+\varPsi(\lambda)}}
          \, \cd \lambda < \infty.
        \end{align}\label{item:assumption}
\end{enumerate}
Note that, we say \((X,\bP)\) is transient if~\eqref{eq:recurrent}
does not hold.
Under the assumption~\ref{item:assumption},
the process
\((X,\bP)\) is recurrent if and only if \((X,\bP)\) is
point recurrent, i.e.,
\begin{align}\label{eq:pt-recurrent}
  \bP(T_a<\infty) = 1,\qquad\text{for all \(a\in\bR\);}
\end{align}
see~Subsection~\ref{Subsec:LT-em}.
The assumption~\ref{item:assumption}
implies that the \(q\)-resolvent density \(r_q\) exists for \(q>0\);
see Subsection~\ref{Subsec:resolvent}.
For \( q > 0\), we define
\begin{align}\label{eq:def-h_q}
  h_q(x) = r_q(0) - r_q(-x)
  = \frac{1}{\pi}
  \int_0^\infty
  \Re\rbra*{\frac{1-\ce^{\ci \lambda x}}{q+\varPsi(\lambda)}}
  \,\cd \lambda
  , \qquad x \in \bR,
\end{align}
where the second identity follows from Proposition~\ref{Prop:r_q-represent}.
It is obvious that \(h_q(0) = 0\), and
by~\eqref{eq:exp-hit-time}, we have \(h_q(x) \ge 0\).
We denote the second moment by
\begin{align}
  m^2 =
  \bP\sbra{{X_1}^2} \in (0, \infty].\label{eq:second-moment}
\end{align}
The following theorem plays a key role
in our penalization results.
Recall that \(X\) is assumed recurrent.

\begin{Thm}\label{Thm:exist-h}
  Suppose that~\ref{item:assumption}
  is satisfied.
  Then the following assertions hold.
  \begin{enumerate}
    \item For any \(x \in \bR\),
          \begin{align}\label{eq:conv-h}
            h(x) \coloneqq \lim_{q \to 0+} h_q(x)
          \end{align}
          exists and is finite,
          which will be called the \emph{renormalized zero resolvent}.
          If \(m^2 < \infty\), then \(h\) has the following representation:
          \begin{align}\label{eq:h-repre}
            h(x)
            = \frac{1}{\pi} \int_0^\infty \Re\rbra*{
              \frac{1-\ce^{\ci \lambda x}}{\varPsi(\lambda)}}
            \, \cd \lambda.
          \end{align}\label{Thm-item:exist-h}
    \item The convergence~\eqref{eq:conv-h} is uniform on compacts, and
          consequently \(h\) is continuous.\label{Thm-item:unif-conv-h}
    \item \(h\) is subadditive on \bR,
          that is, \(h(x+y)\le h(x)+h(y) \) for \(x,y\in
          \bR\).\label{Thm-item:subadditive-h}
  \end{enumerate}
\end{Thm}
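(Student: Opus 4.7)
The plan is to prove the three assertions in order, with the existence statement in part~(\ref{Thm-item:exist-h}) being the main technical step. Write
\[
  g_q(\lambda,x)\coloneqq\Re\rbra*{\frac{1-\ce^{\ci\lambda x}}{q+\varPsi(\lambda)}}
  =\frac{(1-\cos\lambda x)(q+\theta(\lambda))-\omega(\lambda)\sin\lambda x}{(q+\theta(\lambda))^2+\omega(\lambda)^2},
\]
so that $\pi h_q(x)=\int_0^\infty g_q(\lambda,x)\,\cd\lambda$, and split the integral at some fixed $\varepsilon>0$. On the tail $[\varepsilon,\infty)$ the bound $\abs{g_q(\lambda,x)}\le 2/\abs{q+\varPsi(\lambda)}\le 2/\abs{\varPsi(\lambda)}$ together with assumption~\ref{item:assumption} supplies a $q$-uniform integrable dominator, so dominated convergence handles that piece.

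The core of part~(\ref{Thm-item:exist-h}) lies on $(0,\varepsilon]$. Under the extra hypothesis $m^2<\infty$, recurrence in one dimension forces the mean of $X_1$ to vanish, so expanding~\eqref{eq:theta} and~\eqref{eq:omega} gives $\theta(\lambda)\sim\tfrac{1}{2}m^2\lambda^2$ and $\omega(\lambda)=o(\lambda^2)$ as $\lambda\to 0$. Combined with $1-\cos\lambda x=O(\lambda^2)$ and $\sin\lambda x=O(\lambda)$, a short computation shows $\abs{g_q(\lambda,x)}\le C_x$ uniformly in $q\ge 0$ on $(0,\varepsilon]$, so dominated convergence simultaneously delivers the pointwise limit and the representation~\eqref{eq:h-repre}. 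Without finite variance I would instead exploit the probabilistic identity $h_q(x)=r_q(0)(1-u_q(x))$, where $u_q(x)=\bP_x[\ce^{-qT_0}]$ is related to the resolvent by the hitting-time formula announced in the paper, and obtain the limit through a Tauberian/monotonicity analysis of $u_q(x)$ and $r_q(0)$ as $q\to 0+$.

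For part~(\ref{Thm-item:unif-conv-h}), the elementary bound $\abs{\ce^{\ci\lambda x}-\ce^{\ci\lambda y}}\le\abs{\lambda(x-y)}\wedge 2$ yields
\[
  \abs{g_q(\lambda,x)-g_q(\lambda,y)}\le\frac{2(\abs{\lambda(x-y)}\wedge 2)}{\abs{q+\varPsi(\lambda)}},
\]
which combined with the dominating bounds from part~(\ref{Thm-item:exist-h}) shows that $\{h_q\}_{q>0}$ is uniformly equicontinuous on compacts of $\bR$. Arzel\`a--Ascoli then upgrades the pointwise convergence to uniform convergence on compacts, and continuity of $h$ follows at once.

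For part~(\ref{Thm-item:subadditive-h}), I argue at level $q>0$. With $u_q$ as above, the strong Markov property at $T_y$ applied to a path from $x+y$, the elementary inequality $T_0\le T_y+T_0\circ\theta_{T_y}$ on $\{T_y<\infty\}$, and translation invariance of $X$ together yield $u_q(x+y)\ge u_q(x)u_q(y)$. Since $u_q\le 1$,
\[
  h_q(x+y)=r_q(0)(1-u_q(x+y))\le r_q(0)(1-u_q(x))+r_q(0)u_q(x)(1-u_q(y))\le h_q(x)+h_q(y),
\]
and passing to the limit $q\to 0+$ via part~(\ref{Thm-item:exist-h}) gives the subadditivity of $h$.

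The principal obstacle is the existence assertion in part~(\ref{Thm-item:exist-h}) when $m^2=\infty$: near $\lambda=0$ the integrand $g_q(\lambda,x)$ admits no easy $q$-uniform integrable majorant, and one must either exploit the delicate cancellation between its cosine and sine summands or work entirely through the probabilistic representation.
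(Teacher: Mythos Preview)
Your treatment of part~(\ref{Thm-item:subadditive-h}) and of the finite-variance half of part~(\ref{Thm-item:exist-h}) is close to the paper's, but two details need repair and one genuine idea is missing.

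First, the claimed uniform bound $|g_q(\lambda,x)|\le C_x$ on $(0,\varepsilon]$ when $m^2<\infty$ is too optimistic: from $\omega(\lambda)=o(\lambda^2)$ and $|\varPsi(\lambda)|\sim m^2\lambda^2/2$ the sine piece $|\omega(\lambda)\sin\lambda x|/|q+\varPsi(\lambda)|^2$ is only $o(1/\lambda)$, which need not be bounded. The paper instead proves (Lemma~\ref{Lem:fin-omega-integral}) that $\int_0^\infty|\omega(\lambda)/\lambda^3|\,\cd\lambda<\infty$ and uses this to build an \emph{integrable} (not bounded) $q$-free majorant near zero. This is easily fixed once you know the right lemma.

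Second, your equicontinuity bound in part~(\ref{Thm-item:unif-conv-h}) carries only a factor $|\lambda(x-y)|$ near $\lambda=0$, and $\int_0^\delta\lambda/|\varPsi(\lambda)|\,\cd\lambda$ can diverge (already for Brownian motion). The paper avoids this by proving subadditivity of $h_q$ \emph{first} and then using
\[
  |h_q(x+\delta)-h_q(x)|\le h_q(\delta)+h_q(-\delta)\le\int_0^\infty\frac{(\lambda\delta)^2\wedge 2}{|\varPsi(\lambda)|}\,\cd\lambda,
\]
where the symmetrization produces the crucial $\lambda^2$.

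The real gap, which you yourself flag, is the existence of $h(x)$ when $m^2=\infty$. ``Tauberian/monotonicity analysis'' of $r_q(0)(1-u_q(x))$ is not a plan: $r_q(0)\to\infty$ while $1-u_q(x)\to 0$, and neither factor is monotone in a way that resolves the product. The paper's argument is a doubling trick. One first shows by dominated convergence (the integrand here carries the good factor $1-\cos\lambda x$) that both the symmetrized limit $h^S(x)=\lim_{q\to 0}\{h_q(x)+h_q(-x)\}$ and the second difference $h^D(x,y)=\lim_{q\to 0}\{h_q(y+2x)-2h_q(y+x)+h_q(y)\}$ exist. For fixed $a\ne 0$ one extracts subsequences $q_n,q_n'\to 0$ along which $h_{q_n}(2^ja)$ and $h_{q_n'}(2^ja)$ converge for every $j\ge 0$ and realise $\overline h(a)=\limsup h_q(a)$, $\underline h(a)=\liminf h_q(a)$. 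The existence of $h^D$ forces $\overline h(2x)-\underline h(2x)=2(\overline h(x)-\underline h(x))$ along the dyadic orbit, so with $\Delta=\overline h(a)-\underline h(a)$ one gets $\overline h(2^ja)-\underline h(2^ja)=2^j\Delta$. But $h^S(x)/|x|\to 2/m^2=0$, so dividing by $2^ja$ and letting $j\to\infty$ yields $\Delta=0$. This is the missing idea; neither cancellation in $g_q$ nor the probabilistic representation alone will produce it.
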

The proof of Theorem~\ref{Thm:exist-h} will be given in
Section~\ref{Subsec:pf-h}.
The renormalized zero resolvent satisfies the following limit properties.
\begin{Thm}\label{Thm:property-h}
  Suppose that~\ref{item:assumption}
  is satisfied.
  Then the following assertions hold:
  \begin{enumerate}
    \item
          \(\displaystyle
          \lim_{x\to\pm\infty} \frac{h(x)}{\abs{x}} =
          \frac{1}{m^2} \in [0,\infty)
          \);\label{Thm-item:h/x-infinity}
    \item
          \(\displaystyle\label{eq:h-h-limit}
          \lim_{y\to\pm\infty} \cbra*{h(x+y) - h(y)} =
          \pm\frac{x}{m^2}\in\bR,
          \) for all \(x\in\bR\).\label{Thm-item:h-h-limit}
  \end{enumerate}
\end{Thm}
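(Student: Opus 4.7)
The natural route is via the Fourier representation of $h$. The key preliminary is that recurrence of $(X,\bP)$ together with $m^2<\infty$ forces $\bP\sbra{X_1}=0$ — otherwise the strong law of large numbers contradicts recurrence — so $\varPsi'(0)=0$ and
\[
\varPsi(\lambda) = \tfrac{m^2}{2}\lambda^2 + o(\lambda^2)\qquad\text{as }\lambda\to 0,
\]
equivalently $\theta(\lambda)\sim \tfrac{m^2}{2}\lambda^2$ and $\omega(\lambda)=o(\lambda^2)$. Combined with the integrability of $1/\varPsi$ away from $0$ supplied by \ref{item:assumption}, this low-frequency behaviour should drive all four limits. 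My strategy is to prove (ii) first for $m^2<\infty$, deduce (i) from (ii), and handle $m^2=\infty$ separately.

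For part (ii) with $m^2<\infty$, I would start from \eqref{eq:h-repre} in the form
\[
h(x+y)-h(y) = \frac{1}{\pi}\int_0^\infty \Re\rbra*{\frac{\ce^{\ci\lambda y}(1-\ce^{\ci\lambda x})}{\varPsi(\lambda)}}\,\cd\lambda
\]
and split at a fixed cutoff $\Lambda>0$. On $[\Lambda,\infty)$ the integrand is $\ce^{\ci\lambda y}$ times a fixed $L^1$ function (by \ref{item:assumption}), so Riemann--Lebesgue makes this piece $o_{|y|}(1)$. On $[0,\Lambda]$ I would isolate the $1/\lambda$ singularity via
$(1-\ce^{\ci\lambda x})/\varPsi(\lambda) = -2\ci x/(m^2\lambda) + G(\lambda;x)$;
the singular term contributes $\tfrac{2x}{m^2}\int_0^\Lambda \sin(\lambda y)/\lambda\,\cd\lambda$, which by the Dirichlet integral tends to $\pm\pi x/m^2$ as $y\to\pm\infty$. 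After dividing by $\pi$ this yields the claim, provided one also shows $\int_0^\Lambda\Re(\ce^{\ci\lambda y}G(\lambda;x))\,\cd\lambda\to 0$.

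Part (i) for $m^2<\infty$ then follows from (ii) by a telescoping/Ces\`aro argument: taking $x=1$ gives $h(y+1)-h(y)\to 1/m^2$ so $h(n)/n\to 1/m^2$ for integer $n$, and the continuity and subadditivity of $h$ (parts (ii)--(iii) of Theorem~\ref{Thm:exist-h}) interpolate this to give $h(y)/|y|\to 1/m^2$ along $\bR$.

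When $m^2=\infty$, both conclusions reduce to showing the limits vanish. Substituting $\lambda=\mu/x$ in the $q$-regularized formula \eqref{eq:def-h_q} yields
\[
\frac{h_q(x)}{x} = \frac{1}{\pi x^2}\int_0^\infty \Re\rbra*{\frac{1-\ce^{\ci\mu}}{q+\varPsi(\mu/x)}}\,\cd\mu,
\]
and under $m^2=\infty$ monotone convergence in \eqref{eq:theta} gives $\theta(\lambda)/\lambda^2\to\infty$ as $\lambda\to 0$, so $x^2(q+\varPsi(\mu/x))\to\infty$ pointwise in $\mu>0$ and the integrand vanishes. Dominated convergence — with a bound coming from $\theta(\lambda)\ge c\lambda^2$ on a neighborhood of $0$ and \ref{item:assumption} at infinity — should then force $h(x)/x\to 0$; part (ii) at $m^2=\infty$ follows by inserting $\ce^{\ci\lambda y}$ into the same scaling. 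The main obstacle throughout is producing integrable dominating functions that are uniform in the parameters: in particular, the remainder $G(\lambda;x)$ above is only $o(1/\lambda)$ near $0$ because $(\varPsi(\lambda)-(m^2/2)\lambda^2)/\lambda^2$ is only $o(1)$, so a more careful expansion exploiting the evenness of $\theta$ and oddness of $\omega$ is likely required to push the Riemann--Lebesgue step through.
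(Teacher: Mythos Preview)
Your strategy for $m^2<\infty$ is essentially the paper's, and you have correctly located the obstacle: the remainder $G(\lambda;x)$ is only $o(1/\lambda)$, which is not enough for Riemann--Lebesgue. The paper fills exactly this gap with the key estimate
\[
\int_{\bR}\abs*{\frac{\omega(\lambda)}{\lambda^3}}\,\cd\lambda<\infty,
\]
proved by writing $\omega(\lambda)=\int_{\bR}(\lambda x-\sin\lambda x)\,\nu(\cd x)$ (valid since $\bP[X_1]=0$) and applying Fubini with $\int x^2\,\nu(\cd x)<\infty$. With this in hand, one decomposes the near-zero integrand into $\theta(\lambda)\sin\lambda x\sin\lambda y/|\varPsi|^2$ and $\omega(\lambda)\sin\lambda x\cos\lambda y/|\varPsi|^2$; the $\omega$-piece is $L^1$ by the estimate above and dies by Riemann--Lebesgue, while the $\theta$-piece is handled by a Dirichlet-kernel lemma (Jordan's theorem) since $\theta(\lambda)/|\varPsi(\lambda)|^2\cdot\lambda^2\to 2/m^2$. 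Your idea of deducing (i) from (ii) by Ces\`aro plus subadditivity is a pleasant shortcut; the paper instead proves (i) directly by the same decomposition.

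For $m^2=\infty$ there is a genuine gap. The integral representation~\eqref{eq:h-repre} for $h$ is only established when $m^2<\infty$ (the paper remarks that its validity for $m^2=\infty$ is open), so you cannot write $h(x+y)-h(y)$ as an oscillatory integral and run your scaling argument for (ii). Your scaling for $h_q$ also mixes the limits $q\to 0+$ and $x\to\infty$ without a uniform bound. The paper's route is different and avoids any representation of $h$ itself. For (i) one uses $0\le h(x)\le h^S(x):=h(x)+h(-x)$, and $h^S$ \emph{does} have an integral representation (because $1-\cos\lambda x$ supplies the extra factor of $\lambda^2$ near $0$), from which $h^S(x)/|x|\to 2/m^2=0$. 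For (ii) one combines subadditivity with the second difference
\[
h^D(x,y):=h(y+2x)-2h(y+x)+h(y)=\frac{2}{\pi}\int_0^\infty\Re\rbra*{\ce^{\ci\lambda(y+x)}\,\frac{1-\cos\lambda x}{\varPsi(\lambda)}}\,\cd\lambda\xrightarrow[y\to\pm\infty]{}0
\]
(again the $1-\cos$ makes this $L^1$, so Riemann--Lebesgue applies). Since $h^D\to 0$ forces all first differences $h(kx+y)-h((k-1)x+y)$ to share the same $\limsup$, the telescoping bound $\sum_{k=1}^n\{\cdots\}=h(nx+y)-h(y)\le h(nx)$ gives $\limsup_{y}\{h(x+y)-h(y)\}\le h(nx)/n\to 0$, and similarly for the $\liminf$.
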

The proof of Theorem~\ref{Thm:property-h}
will be given in Section~\ref{Subsec:pf-h}.

\begin{Cor}
  Suppose that~\ref{item:assumption}
  is  satisfied.
  For \(-1\le \gamma\le 1\), define
  \begin{align}\label{eq:def-h-gamma}
    h^{(\gamma)}(x)=h(x)+\frac{\gamma}{m^2}x, \quad x\in\bR.
  \end{align}
  Then \(h^{(\gamma)}\) is subadditive and \(h^{(\gamma)}(x)\ge 0\).
\end{Cor}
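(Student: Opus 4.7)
The plan is to observe that this corollary is essentially immediate from the two preceding theorems, with the convention that $\gamma/m^2 = 0$ when $m^2 = \infty$ (so that $h^{(\gamma)} = h$ in that case, and both claims reduce to Theorem~\ref{Thm:exist-h}). Throughout, I may therefore assume $m^2 < \infty$.

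First I would dispose of subadditivity. Since $x \mapsto \gamma x/m^2$ is linear (hence additive), adding it to $h$ preserves subadditivity: for any $x,y \in \bR$,
\begin{align}
h^{(\gamma)}(x+y) = h(x+y) + \frac{\gamma(x+y)}{m^2} \le h(x) + h(y) + \frac{\gamma x}{m^2} + \frac{\gamma y}{m^2} = h^{(\gamma)}(x) + h^{(\gamma)}(y),
\end{align}
by the subadditivity of $h$ from Theorem~\ref{Thm:exist-h}\ref{Thm-item:subadditive-h}. No further work is needed.

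For non-negativity, the key step is to establish the lower bound $h(x) \ge |x|/m^2$ for all $x \in \bR$. Subadditivity of $h$ gives $h(x+y) - h(y) \le h(x)$ for every $y \in \bR$. Letting $y \to +\infty$ and invoking Theorem~\ref{Thm:property-h}\ref{Thm-item:h-h-limit} yields $x/m^2 \le h(x)$, while $y \to -\infty$ yields $-x/m^2 \le h(x)$; together these give $h(x) \ge |x|/m^2$. Consequently,
\begin{align}
h^{(\gamma)}(x) = h(x) + \frac{\gamma x}{m^2} \ge \frac{|x| + \gamma x}{m^2} \ge 0,
\end{align}
where the last inequality is the elementary bound $|x| + \gamma x \ge 0$ valid whenever $|\gamma| \le 1$.

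There is no real obstacle here: the corollary is an easy consequence of Theorems~\ref{Thm:exist-h} and~\ref{Thm:property-h}, the only small point being the sign-cancellation observation $|x| + \gamma x \ge 0$ for $|\gamma| \le 1$, which is precisely the reason the range of admissible $\gamma$ is $[-1,1]$.
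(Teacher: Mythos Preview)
Your proof is correct. Subadditivity is handled identically to the paper. For non-negativity the paper takes a slightly different route: it uses part~(\ref{Thm-item:h/x-infinity}) of Theorem~\ref{Thm:property-h} to note that $\lim_{x\to\pm\infty} h^{(\gamma)}(x)/|x| = (1\pm\gamma)/m^2 \ge 0$, and then argues by contradiction---if $h^{(\gamma)}(x_0)<0$ for some $x_0\ne 0$, subadditivity forces $h^{(\gamma)}(nx_0)/n \le h^{(\gamma)}(x_0)<0$ for all $n$, contradicting that limit. You instead use part~(\ref{Thm-item:h-h-limit}) together with subadditivity of $h$ to obtain the direct pointwise bound $h(x)\ge |x|/m^2$, from which $h^{(\gamma)}\ge 0$ follows immediately. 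Your argument is just as short and has the minor advantage of producing the explicit inequality $h(x)\ge |x|/m^2$ as a by-product, without passing through a contradiction; the paper's argument, on the other hand, only needs the cruder asymptotic~(\ref{Thm-item:h/x-infinity}) rather than the finer limit~(\ref{Thm-item:h-h-limit}).
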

\begin{proof}
  By definition, we have \(h^{(\gamma)}(0)=0\).
  From (\ref{Thm-item:subadditive-h}) of Theorem~\ref{Thm:exist-h},
  the function \(h^{(\gamma)}\) is subadditive. From
  (\ref{Thm-item:h/x-infinity}) of Theorem~\ref{Thm:property-h},
  it holds that \(\lim_{x\to\pm\infty} h^{(\gamma)}(x)/\abs{x} = (1+\gamma)/m^2 \ge
  0\).
  Suppose \(h^{(\gamma)}(x)<0\) for some \(x\in\bR\setminus\cbra{0}\).
  Since \(h^{(\gamma)}\) is subadditive,
  we have \(h(nx)/n\le h(x)<0\) for \(n=1,2,\dots\).
  Letting \(n\to\infty\), we face the contradiction.
  Therefore we have \(h^{(\gamma)}(x)\ge 0\) for all \(x\in\bR\).
\end{proof}
We will prove in Theorem~\ref{Thm:hg-invariant} that
the function \(h^{(\gamma)}\) is invariant for the process
killed upon hitting zero.
Let \(\cL_{+}^1\) denote the set of non-negative functions
on \([0,\infty)\) which satisfy \(\int_0^\infty f(x)\,\cd x<\infty\).
For \(f\in \cL_{+}^1\), define
\begin{align}\label{eq:def-mart}
  M_t^{(\gamma)}= M_t^{(\gamma, f)}= h^{(\gamma)}(X_t)f(L_t)+
  \int_0^\infty
  f(L_t+u)\,\cd u.
\end{align}
Note that, when \(m^2=\infty\),
we have \(h^{(\gamma)}=h^{(0)}=h\) and
\(M^{(\gamma)}=M^{(0)}\) for all \(\gamma\).
\begin{Thm}\label{Thm:martingale}
  Suppose that~\ref{item:assumption}
  is satisfied.
  Let \(f\in \cL^1_{+}\), \(-1\le \gamma\le 1\) and \(x\in \bR\). Then
  \((M_t^{(\gamma)},t\ge 0)\) is a non-negative \(((\cF_t), \bP_x)\)-martingale.
\end{Thm}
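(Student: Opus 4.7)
The plan is to verify the martingale property by reducing it, via the simple Markov property, to a single expected-value identity, and then handling $y=0$ via Itô excursion theory and $y\ne 0$ via splitting at $T_0$. Non-negativity of $M^{(\gamma,f)}$ is immediate from $h^{(\gamma)}\ge 0$ (Corollary) and $f\ge 0$. Applying the simple Markov property at time $s$ and setting $g(u)\coloneqq f(L_s+u)\in\cL^1_+$ gives $\bP_x[M_t^{(\gamma,f)}\mid\cF_s] = \bP_{X_s}[M_{t-s}^{(\gamma,g)}]$, so the martingale property follows at once from the identity
\begin{align*}
\bP_y\!\left[h^{(\gamma)}(X_r) g(L_r) + \int_0^\infty g(L_r+u)\,\cd u\right] = h^{(\gamma)}(y) g(0) + \int_0^\infty g(u)\,\cd u,
\end{align*}
for every $y\in\bR$, $r\ge 0$, and $g\in\cL^1_+$. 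Finiteness of the right-hand side will also supply the integrability of $M^{(\gamma,f)}_t$ a posteriori.

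The case $y=0$ is the main content. Parametrising the Itô excursions $(\epsilon_u)$ by the local time level, an excursion $\epsilon_u$ straddles $r$ iff $\eta_{u-}<r<\eta_u$, in which case $X_r = \epsilon_u(r-\eta_{u-})$ and $L_r = u$. Since $h^{(\gamma)}(0)=0$, the compensation formula gives
\begin{align*}
\bP_0[h^{(\gamma)}(X_r) g(L_r)] = \bP_0\!\left[\int_0^\infty g(u)\, 1_{\eta_{u-}<r}\,\phi(r-\eta_{u-})\,\cd u\right],
\end{align*}
with $\phi(s) \coloneqq n[h^{(\gamma)}(\epsilon(s));\,\zeta>s]$ for the Itô excursion measure $n$. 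The Markov property under $n$, combined with the invariance $P_t^0 h^{(\gamma)}=h^{(\gamma)}$ for the killed semigroup from Theorem \ref{Thm:hg-invariant}, forces $\phi$ to be constant on $(0,\infty)$, and a Laplace-transform calculation identifies this constant as $1$:
\begin{align*}
\int_0^\infty e^{-qs}\phi(s)\,\cd s = \kappa(q)\int r_q(y)\, h^{(\gamma)}(y)\,\cd y = \frac{\kappa(q)\, r_q(0)}{q} = \frac{1}{q},
\end{align*}
using the standard normalisation $\kappa(q)=1/r_q(0)$ for the Laplace exponent of the inverse local time, the partial-fraction identity $\int r_q(y) r_{q'}(-y)\,\cd y = (r_{q'}(0)-r_q(0))/(q-q')$, recurrence (so that $q r_q(0)\to 0$ as $q\to 0$), and, when $m^2<\infty$, $\bP[X_1]=0$ to kill the linear correction $\gamma x/m^2$. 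With $\phi\equiv 1$ on $(0,\infty)$, the time change $u=L_t$ reduces the excursion integral to $\bP_0[\int_0^{L_r} g(v)\,\cd v]$, which combined with $\bP_0[\int_0^\infty g(L_r+u)\,\cd u] = \bP_0[\int_{L_r}^\infty g(v)\,\cd v]$ yields the desired $\int_0^\infty g(v)\,\cd v$.

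For $y\ne 0$, I would split on $\{r<T_0\}\cup\{T_0\le r\}$. On $\{r<T_0\}$, $L_r=0$, and the invariance $\bP_y[h^{(\gamma)}(X_r);\,r<T_0] = h^{(\gamma)}(y)$ (Theorem \ref{Thm:hg-invariant}) produces the $h^{(\gamma)}(y) g(0)$ term; on $\{T_0\le r\}$, the strong Markov property at $T_0$ together with $L_{T_0}=0$ reduces the contribution to $\bP_y(T_0\le r)\int_0^\infty g(u)\,\cd u$ via the just-proved case $y=0$. Summing the two pieces gives the required identity.

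The main obstacle is the identification $\phi\equiv 1$: passing the $q'\to 0$ limit through the convolution in $\int r_q(y)h^{(\gamma)}(y)\,\cd y$ needs the uniform convergence and subadditive growth of $h_q\to h$ from Theorems \ref{Thm:exist-h} and \ref{Thm:property-h}, together with the resolvent convolution formula for $\int r_q(y) r_{q'}(-y)\,\cd y$. The remaining (strong) Markov reductions are standard, as is the excursion-level Markov argument showing that $\phi$ is constant.
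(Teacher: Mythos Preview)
Your approach is correct but takes a genuinely different route from the paper's. The paper does not verify the martingale identity directly; instead it obtains $(M_t^{(0)})$ as an $\cL^1(\bP_x)$-limit of the tautological martingales $M_t^q = r_q(0)\,\bP_x[f(L_{\bm e_q})\mid\cF_t]$ arising from the exponential clock penalization (Theorem~\ref{Thm:exp-L^1-conv}), separately proves that $(X_t f(L_t))$ is a martingale via an excursion argument (Remark~\ref{Rem:martingale} and Theorem~\ref{Thm:mart-XfL}), and then simply writes $M_t^{(\gamma)} = M_t^{(0)} + \frac{\gamma}{m^2} X_t f(L_t)$ as a linear combination of two martingales. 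Your route instead feeds the two identities of Theorem~\ref{Thm:hg-invariant}, namely $\bP_y[h^{(\gamma)}(X_r);T_0>r]=h^{(\gamma)}(y)$ and $n[h^{(\gamma)}(X_r);T_0>r]=1$, into a compensation-formula computation. This is cleaner and self-contained once Theorem~\ref{Thm:hg-invariant} is in hand; note that in the paper that theorem is stated \emph{after} Theorem~\ref{Thm:martingale}, but its proof (via Pant\'{\i}\ for $\gamma=0$ and Theorem~\ref{Thm:mart-XfL} for the linear part) does not use Theorem~\ref{Thm:martingale}, so there is no circularity. Two minor remarks: (i) your Laplace-transform identification of the constant $\phi\equiv 1$ is precisely the $n$-part of Theorem~\ref{Thm:hg-invariant}, so if you cite that theorem the Laplace step is redundant; (ii) in the $y\ne 0$ split, the piece on $\{r<T_0\}$ also contributes $\bP_y(r<T_0)\int_0^\infty g(u)\,\cd u$, which then combines with the $\{T_0\le r\}$ piece to give the full $\int_0^\infty g$ --- you clearly intended this but did not write it. What your approach buys is a direct one-step proof once invariance is known; what the paper's approach buys is that the martingale property falls out of the penalization limits which are the paper's main objects anyway.
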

Theorem~\ref{Thm:martingale} will be proved in
Section~\ref{Subsec:pf-hitting-l1}.
Using this martingale,
we discuss our penalization problems.
Let \(L=(L_t)\) denote the local time at the origin of \(X\);
see Section~\ref{Subsec:LT-em}.

\begin{Thm}[hitting time clock]\label{Thm:hitting-time-result}
  Suppose that the condition~\ref{item:assumption} is satisfied.
  Let \(f \in \cL_{+}^1\) and \(x \in \bR\).
  Define \(h^B(a)=\bP\sbra{L_{T_a}}\) and
  \begin{align}
    N_t^a & = h^B(a) \bP_x\sbra*{f(L_{T_a}); t<T_a|\cF_t}, \\
    M_t^a & = h^B(a) \bP_x\sbra*{f(L_{T_a})|\cF_t}.
  \end{align}
  Then it holds that
  \begin{align}
    \lim_{a\to\pm\infty}N_t^a
    =\lim_{a\to\pm\infty} M_t^a
    = M_t^{(\pm 1)},
    \qquad \text{\(\bP_x\)-a.s.\ and in \(\cL^1(\bP_x)\).}
  \end{align}
  Consequently, if \(M_0^{(\pm 1)}>0\) under \(\bP_x\),
  it holds that
  \begin{align}
    \frac{\bP_x\sbra{F_t f(L_{T_a})}}{\bP_x\sbra{f(L_{T_a})}}
    \longrightarrow \bP_x\sbra*{F_t \frac{M_t^{(\pm 1)}}{M_0^{(\pm 1)}}},
    \qquad \text{as \(a \to \pm\infty \),}
  \end{align}
  for all bounded \(\cF_t\)-measurable functionals \(F_t\).
\end{Thm}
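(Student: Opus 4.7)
The plan is to compute $N_t^a$ by the strong Markov property, identify its pointwise limit as $a\to\pm\infty$, and upgrade to $\cL^1(\bP_x)$ convergence via the martingale of Theorem~\ref{Thm:martingale} and Scheff\'{e}'s lemma, before reading off the penalization formula by the tower property. First, on $\{t<T_a\}$ the strong Markov property at $t$, together with the shift decomposition of the local time, yields
\begin{align}
  N_t^a = 1_{\{t<T_a\}}\, h^B(a)\, \phi_a(X_t,L_t),\qquad \phi_a(y,l)\coloneqq \bP_y\sbra{f(l+L_{T_a})}.
\end{align}
For $y\ne 0$, I would decompose $\phi_a(y,l)$ further at $T_0$ by strong Markov---on $\{T_a<T_0\}$ the new local time vanishes, while on $\{T_0<T_a\}$ the remainder is distributed as under $\bP_0$---and then identify $L_{T_a}$ under $\bP_0$ as exponential with mean $h^B(a)$ via excursion theory, obtaining
\begin{align}
  h^B(a)\,\phi_a(y,l) = f(l)\cdot h^B(a)\bP_y(T_a<T_0) + \bP_y(T_0<T_a)\int_0^\infty f(l+u)\,\ce^{-u/h^B(a)}\,\cd u.
\end{align}

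Next I would pass to the limit $a\to\pm\infty$. Recurrence gives $T_a\to\infty$, whence $h^B(a)=\bP\sbra{L_{T_a}}\to\infty$ by monotone convergence, and dominated convergence (using $f\in\cL_{+}^1$ and $\ce^{-u/h^B(a)}\le 1$) sends the integral to $\int_0^\infty f(l+u)\,\cd u$. Likewise $\bP_y(T_0<T_a)\to 1$ by recurrence, and $1_{\{t<T_a\}}\to 1$ $\bP_x$-a.s.\ since $X$ is c\`{a}dl\`{a}g. The decisive input is
\begin{align}
  h^B(a)\,\bP_y(T_a<T_0)\;\longrightarrow\;h^{(\pm 1)}(y), \qquad a\to\pm\infty,
\end{align}
which I would derive from the invariance of $h^{(\pm 1)}$ for the process killed at $0$ (Theorem~\ref{Thm:hg-invariant}) by means of a two-sided exit formula of the type $\bP_y(T_a<T_0)=(h(y)+h(-a)-h(y-a))/h^{(\pm 1)}(a)$, combined with the refined asymptotic $h(y-a)-h(-a)\to \mp y/m^2$ of Theorem~\ref{Thm:property-h}(\ref{Thm-item:h-h-limit}) and the matching equivalence $h^B(a)\sim h^{(\pm 1)}(a)$. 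Assembling these limits produces $N_t^a\to M_t^{(\pm 1)}$ $\bP_x$-a.s.

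For $M_t^a$, on $\{t\ge T_a\}$ the variable $f(L_{T_a})$ is $\cF_{T_a}\subset\cF_t$-measurable, so
\begin{align}
  M_t^a - N_t^a = h^B(a)\,f(L_{T_a})\,1_{\{T_a\le t\}},
\end{align}
which vanishes $\bP_x$-a.s.\ as $a\to\pm\infty$ because $T_a>t$ eventually; hence $M_t^a\to M_t^{(\pm 1)}$ a.s.\ as well. For $\cL^1$ convergence of $M_t^a$, the identity $\bP_x\sbra{M_t^a}=h^B(a)\phi_a(x,0)$ converges by the same analysis to $M_0^{(\pm 1)}(x)=\bP_x\sbra{M_t^{(\pm 1)}}$, the equality being the martingale property of Theorem~\ref{Thm:martingale}, so Scheff\'{e}'s lemma promotes the a.s.\ limit to $\cL^1$. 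The sandwich $0\le N_t^a\le M_t^a$ then transfers uniform integrability to $\{N_t^a\}_a$ and yields $\cL^1$ convergence of $N_t^a$ too. Finally, the tower property gives $\bP_x\sbra{F_t f(L_{T_a})}/\bP_x\sbra{f(L_{T_a})}=\bP_x\sbra{F_t M_t^a}/\bP_x\sbra{M_t^a}$ for bounded $\cF_t$-measurable $F_t$, which converges to $\bP_x\sbra{F_t M_t^{(\pm 1)}/M_0^{(\pm 1)}}$ when $M_0^{(\pm 1)}(x)>0$.

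The hard part will be the asymptotic $h^B(a)\bP_y(T_a<T_0)\to h^{(\pm 1)}(y)$. The two-sided exit formula must be handled with care since $h^{(\pm 1)}$ is unbounded and a naive optional-stopping application of the harmonic function from Theorem~\ref{Thm:hg-invariant} at $T_0\wedge T_a$ demands a uniform-integrability justification that leans on the sharp behavior of $h$ at $\pm\infty$ furnished by Theorem~\ref{Thm:property-h}. Moreover, the matching normalization $h^B(a)\sim h^{(\pm 1)}(a)$ as $a\to\pm\infty$ is itself a separate nontrivial asymptotic claim that must be verified, e.g.\ by a Wiener--Hopf or excursion-theoretic argument.
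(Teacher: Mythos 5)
Your overall architecture matches the paper's: decompose $N_t^a$ via the strong Markov property and Lemma~\ref{Lem:expect-L_T_a}, pass to the limit in each piece, control $M_t^a - N_t^a$, and upgrade a.s.\ to $\cL^1$ by Scheff\'e. However, the step you flag as ``the hard part'' is misdiagnosed because you have miswritten the key two-sided exit formula. The correct identity (Lemma~\ref{Lem:h^B-hitting-h}(\ref{Lem-item:two-hittig-time-prob}), obtained by elementary resolvent manipulations, not by optional stopping) is
\begin{align}
  \bP_y(T_a<T_0)=\frac{h(-a)+h(y)-h(y-a)}{h^B(a)},
\end{align}
with $h^B(a)=h(a)+h(-a)$ in the denominator, not $h^{(\pm 1)}(a)$. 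The factor $h^B(a)$ therefore cancels \emph{exactly}, leaving $h^B(a)\,\bP_y(T_a<T_0)=h(-a)+h(y)-h(y-a)$, and Theorem~\ref{Thm:property-h}(\ref{Thm-item:h-h-limit}) applied with $x=y$ and $-a\to\mp\infty$ gives $h(-a)-h(y-a)\to \pm y/m^2$, whence the limit $h^{(\pm 1)}(y)$ is immediate. There is no need for the auxiliary equivalence $h^B(a)\sim h^{(\pm 1)}(a)$, no Wiener--Hopf argument, and no uniform-integrability justification for optional stopping of $h^{(\pm 1)}$. Your proposed route via Theorem~\ref{Thm:hg-invariant} is both harder and logically risky, since that theorem is proved later in the paper and its proof leans on material independent of but not prior to this section.

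Two smaller points. First, $h^B(a)\to\infty$ does not follow from monotone convergence as you assert ($T_a$ and hence $L_{T_a}$ is not monotone in $a$); the paper's Lemma~\ref{Thm:h^B-infty} gives a short proof by cutting at an independent exponential time and sending $q\to 0+$. Second, for the $\cL^1$ upgrade you invoke Theorem~\ref{Thm:martingale} as a black box, but in the paper that theorem's proof ultimately depends on the $\cL^1$ convergence established in this very section (for $m^2<\infty$, the martingale property of $M^{(\pm 1)}$ is deduced from $\bP_x\sbra{M_t^a\mid\cF_s}=M_s^a$ by letting $a\to\pm\infty$). You can avoid circularity by first proving $\cL^1$ convergence directly for bounded $f$ (using the subadditivity bound $h(X_t)+h(-a)-h(X_t-a)\le h(X_t)+h(-X_t)$ together with Lemma~\ref{Lem:h-l1-conv}, and optional stopping with the already-established martingale $M^{(0)}$ to kill $M_t^a-N_t^a$), and then truncating $f$; or alternatively by establishing the martingale property of $X_t f(L_t)$ independently via Theorem~\ref{Thm:mart-XfL}. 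Either way, this dependency needs to be made explicit. Finally, note the paper treats the cases $m^2=\infty$ (where $M^{(\pm 1)}=M^{(0)}$ and everything reduces to the exponential-clock result) and $m^2<\infty$ separately; your sketch does not make this distinction.
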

Theorem~\ref{Thm:hitting-time-result} will be proved
in Section~\ref{Sec:hitting-time}.
If we take \(f=1_{\cbra{u=0}}\), we obtain
the conditioning result.
\begin{Cor}\label{Cor:hitting-cond}
Suppose that the condition~\ref{item:assumption} is satisfied.
Let \(x\in \bR\) with \(h^{(\pm 1)}(x)>0\).
Then it holds that
\begin{align}
  \bP_x\sbra{F_t|T_0>T_a}
  \longrightarrow \bP_x\sbra*{F_t\frac{h^{(\pm 1)}(X_t)}{h^{(\pm 1)}(x)};T_0>t},
  \qquad\text{as \(a\to\pm\infty\),}
\end{align}
for all bounded \(\cF_t\)-measurable functionals \(F_t\).
\end{Cor}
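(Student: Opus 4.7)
The plan is to apply Theorem~\ref{Thm:hitting-time-result} with the choice $f=1_{\cbra{u=0}}$, the indicator of the singleton $\cbra{0}\subset[0,\infty)$. This $f$ lies in $\cL_{+}^1$ because it is non-negative and $\int_0^\infty f(u)\,\cd u=0$.

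First I would translate the ratio appearing in Theorem~\ref{Thm:hitting-time-result} into the conditional probability of the corollary. Since $h^{(\pm 1)}(0)=0$, the hypothesis $h^{(\pm 1)}(x)>0$ forces $x\ne 0$. Under~\ref{item:assumption} the point $0$ is regular for itself, so $L$ begins to accumulate immediately after $T_0$; combined with $x\ne 0$, this yields the $\bP_x$-a.s.\ identity $\cbra{L_u=0}=\cbra{u\le T_0}$. In particular $f(L_{T_a})=1_{\cbra{T_a<T_0}}$ $\bP_x$-a.s., whence
\begin{align}
\frac{\bP_x\sbra{F_t f(L_{T_a})}}{\bP_x\sbra{f(L_{T_a})}}
=\frac{\bP_x\sbra{F_t;\,T_a<T_0}}{\bP_x(T_a<T_0)}
=\bP_x\sbra{F_t\mid T_0>T_a}.
\end{align}

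Next I would identify the limiting martingale. Since $\cbra{u\ge 0\colon L_t+u=0}$ has Lebesgue measure zero, the integral term in~\eqref{eq:def-mart} vanishes, and the martingale reduces to
\begin{align}
M_t^{(\pm 1)}=h^{(\pm 1)}(X_t)\,1_{\cbra{L_t=0}}=h^{(\pm 1)}(X_t)\,1_{\cbra{T_0>t}}\qquad\bP_x\text{-a.s.}
\end{align}
Evaluating at $t=0$ gives $M_0^{(\pm 1)}=h^{(\pm 1)}(x)>0$, so the ratio conclusion of Theorem~\ref{Thm:hitting-time-result} is applicable; substituting the two identifications above produces precisely
\begin{align}
\bP_x\sbra{F_t\mid T_0>T_a}\longrightarrow \bP_x\sbra*{F_t\frac{h^{(\pm 1)}(X_t)}{h^{(\pm 1)}(x)};\,T_0>t}\qquad\text{as }a\to\pm\infty,
\end{align}
which is the asserted convergence.

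The only delicate point is that $f=1_{\cbra{u=0}}$ has vanishing $L^1$-norm. Theorem~\ref{Thm:hitting-time-result} imposes only $f\in\cL_{+}^1$, so this is permitted by its statement. If one prefers to avoid the boundary case, one may apply the theorem to the strictly positive approximants $f_n=1_{[0,1/n]}\in\cL_{+}^1$ and then let $n\to\infty$: monotone convergence in both the numerator and denominator of the ratio allows the limits $a\to\pm\infty$ and $n\to\infty$ to be exchanged, and one verifies that $M_t^{(\pm 1,f_n)}$ and $M_0^{(\pm 1,f_n)}$ converge respectively to $h^{(\pm 1)}(X_t)\,1_{\cbra{T_0>t}}$ and $h^{(\pm 1)}(x)$, recovering the same limit.
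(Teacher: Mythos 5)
Your proof is correct and is exactly the paper's intended argument: the paper simply remarks that the corollary follows from Theorem~\ref{Thm:hitting-time-result} by taking \(f=1_{\cbra{u=0}}\), and you have supplied the routine verifications (that \(f\in\cL_+^1\), that \(f(L_{T_a})=1_{\cbra{T_0>T_a}}\) a.s., that the integral term in \(M_t^{(\pm 1)}\) vanishes, and that \(M_0^{(\pm 1)}=h^{(\pm 1)}(x)>0\)) which the paper leaves implicit. The optional approximation via \(f_n=1_{[0,1/n]}\) is unnecessary since the theorem's hypothesis \(f\in\cL_+^1\) does not exclude \(\int_0^\infty f=0\), but it does no harm.
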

See also Corollary~\ref{Cor:avoid-zero}.

Let us state our penalization result with two-point hitting time clock.
For \(a,b\in\bR\), we write
\(T_{a,b} = T_{\cbra{a,b}}= T_{a} \wedge T_{b}\).
For \(-1\le\gamma\le 1\), we say
\begin{align}\label{eq:a-b-inf-notation}
  \text{\((a,b)\xrightarrow[]{\gamma}\infty\)
    when \(a\to\infty\), \(b\to\infty\) and \(\frac{a-b}{a+b}\to\gamma\).}
\end{align}
\begin{Thm}[two-point hitting time clock]\label{Thm:two-point-hitting-time-result}
  Suppose that the condition~\ref{item:assumption} is satisfied.
  Let \(f \in \cL_{+}^1\), \(x \in \bR\), and \(a,b>0\).
  Define \(	h^C(a,-b) = \bP\sbra{L_{T_{a, -b}}}\), and
  \begin{align}
    N_t^{a,b}
     & =h^C(a, -b)
    \bP_x\sbra{f(L_{T_{a, -b}}); t<T_{a,-b}|\cF_t}, \\
    M_t^{a,b}
     & =h^C(a, -b)
    \bP_x\sbra{f(L_{T_{a,-b}})|\cF_t}.
  \end{align}
  Then it holds that
  \begin{align}
    \lim_{(a,b)\xrightarrow[]{\gamma}\infty}N_t^{a,b}
    =\lim_{(a,b)\xrightarrow[]{\gamma}\infty} M_t^{a,b}
    =M_t^{(\gamma)},\quad
    \text{\(\bP_x\)-a.s.\ and
      in \(\cL^1(\bP_x)\).}
  \end{align}
  Consequently, if \(M_0^{(\gamma)}>0\) under \(\bP_x\),
  it holds that
  \begin{align}
    \frac{\bP_x\sbra{F_t f(L_{T_{a, -b}})}}{\bP_x\sbra{f(L_{T_{a,-b}})}}
    \longrightarrow \bP_x\sbra*{F_t \frac{M_t^{(\gamma)}}{M_0^{(\gamma)}}},
    \qquad
    \text{as \((a,b)\xrightarrow[]{\gamma}\infty\),}
  \end{align}
  for all bounded \(\cF_t\)-measurable functionals \(F_t\).
\end{Thm}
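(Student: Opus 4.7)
The plan is to mirror the proof of Theorem~\ref{Thm:hitting-time-result} but to track the ratio $(a-b)/(a+b)\to\gamma$ throughout, since it is this ratio that selects the invariant function $h^{(\gamma)}$. First I apply the strong Markov property at time $t$: on $\{t<T_{a,-b}\}$ the shifted process starts afresh from $X_t$, and the local time at~$0$ decomposes as $L_{T_{a,-b}}=L_t+(L_{T_{a,-b}}-L_t)$ with the increment distributed as $L_{T_{a,-b}}$ under $\bP_{X_t}$. Multiplying by $h^C(a,-b)$ yields
\begin{align}
M_t^{a,b}=N_t^{a,b}+h^C(a,-b)\,1_{\{t\ge T_{a,-b}\}}f(L_{T_{a,-b}}),
\end{align}
and since $T_{a,-b}\to\infty$ $\bP_x$-a.s.\ by recurrence, the residual term eventually equals zero pathwise; it therefore suffices to study $N_t^{a,b}$.

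The core asymptotic to establish is, for each fixed $y\in\bR$ and $\ell\ge 0$,
\begin{align}\label{eq:plan-key}
h^C(a,-b)\,\bP_y\sbra{f(\ell+L_{T_{a,-b}})}
\longrightarrow h^{(\gamma)}(y)\,f(\ell)+\int_0^\infty f(\ell+u)\,\cd u
\end{align}
as $(a,b)\xrightarrow{\gamma}\infty$. I obtain this via excursion theory at~$0$: under $\bP_0$, the variable $L_{T_{a,-b}}$ is exponentially distributed with mean $h^C(a,-b)$, so conditioning on whether $T_0$ precedes $T_{a,-b}$ gives
\begin{align}
\bP_y\sbra{f(\ell+L_{T_{a,-b}})}
=\bP_y(T_0>T_{a,-b})\,f(\ell)
+\frac{\bP_y(T_0\le T_{a,-b})}{h^C(a,-b)}\int_0^\infty f(\ell+s)\,\ce^{-s/h^C(a,-b)}\,\cd s.
\end{align}
Recurrence forces $\bP_y(T_0\le T_{a,-b})\to 1$ and $h^C(a,-b)\to\infty$, so monotone convergence handles the integral term, and~\eqref{eq:plan-key} reduces to the crucial identity
\begin{align}\label{eq:plan-crucial}
h^C(a,-b)\,\bP_y(T_0>T_{a,-b})\longrightarrow h^{(\gamma)}(y).
\end{align}
This is the main obstacle: the direction of escape has to be extracted precisely, and the combination $h^{(\gamma)}(y)=h(y)+\gamma y/m^2$ must emerge from the one-sided limits $\lim_{z\to\pm\infty}\cbra{h(y+z)-h(z)}=\pm y/m^2$ supplied by (\ref{Thm-item:h-h-limit}) of Theorem~\ref{Thm:property-h}, with weights reflecting the proportions $a/(a+b)\to(1+\gamma)/2$ and $b/(a+b)\to(1-\gamma)/2$. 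Concretely, I would express $\bP_y(T_0>T_{a,-b})$ as an explicit combination of translated $h$-values arising from the invariance of $h^{(\gamma)}$ for the process killed at~$0$ (Theorem~\ref{Thm:hg-invariant}) and optional stopping at $T_{a,-b}$.

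Given~\eqref{eq:plan-key}, the $\bP_x$-a.s.\ convergence $N_t^{a,b}\to M_t^{(\gamma)}$ follows by plugging $y=X_t$ and $\ell=L_t$. To upgrade to $\cL^1(\bP_x)$, I use that both $M_t^{a,b}$ (a conditional expectation scaled by the constant $h^C(a,-b)$) and $M_t^{(\gamma)}$ (by Theorem~\ref{Thm:martingale}) are non-negative $(\cF_t,\bP_x)$-martingales, and that~\eqref{eq:plan-key} at $t=0$, $y=x$, $\ell=0$ gives $\bP_x\sbra{M_t^{a,b}}=h^C(a,-b)\bP_x\sbra{f(L_{T_{a,-b}})}\to\bP_x\sbra{M_t^{(\gamma)}}$; Scheff\'e's lemma then promotes a.s.\ convergence to $\cL^1(\bP_x)$ convergence. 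The penalization ratio finally follows from the tower property,
\begin{align}
\frac{\bP_x\sbra{F_t f(L_{T_{a,-b}})}}{\bP_x\sbra{f(L_{T_{a,-b}})}}
=\frac{\bP_x\sbra{F_t M_t^{a,b}}}{\bP_x\sbra{M_t^{a,b}}}
\longrightarrow\bP_x\sbra*{F_t\frac{M_t^{(\gamma)}}{M_0^{(\gamma)}}},
\end{align}
valid whenever $M_0^{(\gamma)}>0$.
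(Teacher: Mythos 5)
Your overall architecture is the same as the paper's: decompose $M_t^{a,b}=N_t^{a,b}+A_t^{a,b}$ via the strong Markov property at time $t$, show the residual $A_t^{a,b}$ vanishes a.s.\ because $T_{a,-b}\to\infty$, reduce the a.s.\ limit of $N_t^{a,b}$ to the pointwise statement $h^C(a,-b)\bP_y(T_0>T_{a,-b})\to h^{(\gamma)}(y)$, and finally promote to $\cL^1$ by Scheff\'e using the martingale property of $M_t^{(\gamma)}$ (Theorem~\ref{Thm:martingale}). All of this is sound, and the Fatou sandwich you implicitly invoke to transfer $\cL^1$ convergence from $M_t^{a,b}$ to $N_t^{a,b}$ works exactly as in the proof of Theorem~\ref{Thm:hitting-time-result}.

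The gap is at the step you yourself flag as ``the main obstacle'': the limit~\eqref{eq:plan-crucial}. You propose to obtain $h^C(a,-b)\bP_y(T_0>T_{a,-b})$ from the invariance of $h^{(\gamma)}$ (Theorem~\ref{Thm:hg-invariant}) plus optional stopping at $T_{a,-b}$, but that route is not carried out and is problematic as stated. Optional stopping the $h^{(\gamma)}$-martingale at $T_0\wedge T_{a,-b}$ (granting uniform integrability, itself unaddressed) yields
\begin{align}
h^{(\gamma)}(y)=h^{(\gamma)}(a)\,\bP_y(T_a<T_0\wedge T_{-b})+h^{(\gamma)}(-b)\,\bP_y(T_{-b}<T_0\wedge T_a),
\end{align}
which introduces two unknowns and still does not produce $\bP_y(T_0>T_{a,-b})$ without further relations; moreover, the exit quantities here are $\gamma$-independent, so using the $\gamma$-dependent harmonic function is conceptually mismatched. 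The paper avoids this entirely: Lemma~\ref{Lem:LT-two-hitting} gives an explicit formula for $h^C(a,b)$ in terms of $h$, Lemma~\ref{Lem:three-hitting-time} solves the strong-Markov linear system to get $\bP_x(T_a<T_b\wedge T_c)$ explicitly, and combining these with~\eqref{eq:two-hittig-time-prob} yields a closed form for $h^C(a,-b)\bP_x(T_0>T_{a,-b})$ that one then passes to the limit term-by-term using Theorem~\ref{Thm:property-h}(\ref{Thm-item:h/x-infinity}) and (\ref{Thm-item:h-h-limit}). You need that explicit algebra (or an equivalent) to actually close the argument; the invariance-plus-optional-stopping sketch does not substitute for it.
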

The proof of Theorem~\ref{Thm:two-point-hitting-time-result}
will be given in Section~\ref{Subsec:pf-two-hitting}.
\begin{Cor}\label{Cor:two-point-hitting-cond}
Suppose that the condition~\ref{item:assumption} is satisfied.
Let \(-1\le\gamma\le 1\) and
\(x\in \bR\) with \(h^{(\gamma)}(x)>0\).
Then it holds that
\begin{align}
  \bP_x\sbra{F_t|T_0>T_{a,-b}}
  \longrightarrow \bP_x\sbra*{F_t\frac{h^{(\gamma)}(X_t)}{h^{(\gamma)}(x)};T_0>t},
  \qquad\text{as \((a,b)\xrightarrow[]{\gamma}\infty\),}
\end{align}
for all bounded \(\cF_t\)-measurable functionals \(F_t\).
\end{Cor}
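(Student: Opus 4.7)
The plan is to derive Corollary~\ref{Cor:two-point-hitting-cond} as a direct specialization of Theorem~\ref{Thm:two-point-hitting-time-result} to the degenerate indicator $f = 1_{\cbra{0}}$, which trivially sits in $\cL^1_+$ (non-negative with integral zero). First, note that the hypothesis $h^{(\gamma)}(x)>0$ forces $x\ne 0$, since $h^{(\gamma)}(0)=0$; in particular $T_0>0$ under $\bP_x$ and the local time $L_t$ at the origin vanishes precisely on $\cbra{t\le T_0}$. Thus, up to a $\bP_x$-null set, $\cbra{L_{T_{a,-b}}=0} = \cbra{T_0>T_{a,-b}}$, so that $f(L_{T_{a,-b}}) = 1_{\cbra{T_0>T_{a,-b}}}$, which gives
\begin{align}
\bP_x\sbra{F_t \mid T_0 > T_{a,-b}}
= \frac{\bP_x\sbra{F_t\, f(L_{T_{a,-b}})}}{\bP_x\sbra{f(L_{T_{a,-b}})}}.
\end{align}
This is exactly the ratio controlled by Theorem~\ref{Thm:two-point-hitting-time-result}.

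Second, I would evaluate the limiting martingale $M_t^{(\gamma,f)}$ of~\eqref{eq:def-mart} for this choice of $f$. Since $1_{\cbra{0}}(L_t+u)=0$ for Lebesgue-a.e.\ $u\in(0,\infty)$, the integral term vanishes identically, leaving
\begin{align}
M_t^{(\gamma,f)} = h^{(\gamma)}(X_t)\,1_{\cbra{L_t=0}} = h^{(\gamma)}(X_t)\,1_{\cbra{T_0>t}}.
\end{align}
In particular $M_0^{(\gamma,f)} = h^{(\gamma)}(x)>0$ under $\bP_x$, so the hypothesis of the ``consequently''-clause of Theorem~\ref{Thm:two-point-hitting-time-result} is met, and substituting the two expressions above into its conclusion yields precisely the right-hand side of the corollary.

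The only point where I anticipate any real care is in justifying the use of the Lebesgue-a.e.\ vanishing $f = 1_{\cbra{0}}$ inside Theorem~\ref{Thm:two-point-hitting-time-result}. If this direct substitution is regarded as borderline, I would instead regularize by $f_\varepsilon(u)=\varepsilon^{-1}1_{[0,\varepsilon]}(u)\in\cL^1_+$, apply the theorem to each $f_\varepsilon$, and pass to $\varepsilon\to 0+$. On the left side, $\bP_x\sbra{F_t\,f_\varepsilon(L_{T_{a,-b}})}/\bP_x\sbra{f_\varepsilon(L_{T_{a,-b}})}$ converges to the conditional expectation $\bP_x\sbra{F_t\mid T_0>T_{a,-b}}$ by dominated convergence and the fact that $\cbra{L_{T_{a,-b}}\le\varepsilon}\downarrow \cbra{L_{T_{a,-b}}=0}$; on the right side, $M_t^{(\gamma,f_\varepsilon)}\to M_t^{(\gamma,f)}$ pointwise, and the exchange of limits is secured by the $\cL^1(\bP_x)$-convergence supplied by the theorem together with the continuity of $h^{(\gamma)}$ (Theorem~\ref{Thm:exist-h}~(\ref{Thm-item:unif-conv-h})). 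This limit-exchange bookkeeping, rather than the algebraic identifications above, is the only step expected to require attention.
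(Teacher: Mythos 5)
Your proposal follows the paper's own route exactly: the paper obtains Corollaries~\ref{Cor:hitting-cond}, \ref{Cor:two-point-hitting-cond} and~\ref{Cor:avoid-zero} from the corresponding penalization theorems precisely by taking \(f=1_{\cbra{u=0}}\in\cL^1_+\), which is your main argument, and your identifications \(\cbra{L_{T_{a,-b}}=0}=\cbra{T_0>T_{a,-b}}\), \(\cbra{L_t=0}=\cbra{T_0>t}\) (both via regularity of \(0\)) and \(M_t^{(\gamma,1_{\cbra{0}})}=h^{(\gamma)}(X_t)1_{\cbra{T_0>t}}\) with \(M_0^{(\gamma,1_{\cbra{0}})}=h^{(\gamma)}(x)>0\) are all correct. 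The fallback regularization via \(f_\varepsilon\) you sketch is unnecessary since \(1_{\cbra{0}}\) already lies in \(\cL^1_+\), but it does no harm.
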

See also Corollary~\ref{Cor:avoid-zero}.

Note that Theorems~\ref{Thm:hitting-time-result}
and~\ref{Thm:two-point-hitting-time-result} show that the limit
law varies according to the chosen clock
when \(m^2<\infty\).

\subsection{Backgrounds of the renormalized zero resolvent}\label{Subsec:back-h}
The existence of \(h\) for symmetric L\'{e}vy processes
was proved by Salminen--Yor~\cite{MR2409011}
under the assumption~\ref{item:assumption}; see also
Yano~\cite{MR2603019}.
We shall review early studies of the existence of \(h\) and
its limit properties
for asymmetric processes.

Similar results were obtained for random walks
by Spitzer~\cite[Chapter VII]{MR0171290},
Port--Stone~\cite{MR0215375,MR0226740,MR261706}
and Stone~\cite{MR238398}.
For L\'{e}vy processes, Port--Stone~\cite[Section 17]{MR346919} obtained some results
which were similar to but different from
Theorems~\ref{Thm:exist-h} and~\ref{Thm:property-h},
reducing them to the random walk case.
(For the proofs
of Theorems~\ref{Thm:exist-h} and~\ref{Thm:property-h},
we are inspired
by Spitzer~\cite[Chapter VII]{MR0171290},
Port--Stone~\cite[Section 17]{MR0215375,MR0226740,MR261706,MR346919}
and Stone~\cite{MR238398}.)

Yano~\cite{MR3072331} showed
the existence of the renormalized zero resolvent \(h\)
under the following two conditions:
\begin{enumerate}[label=\textbf{(Y\arabic*)}]
  \item \(\displaystyle \int_0^\infty \frac{1}{q+\theta(\lambda)}
        \, \cd \lambda < \infty\) for all \(q>0\);\label{item:cond-Yano1}
  \item \(\theta\) and \(\omega\) have measurable derivatives
        on \((0, \infty)\) which satisfy
        \begin{align}
          \int_0^\infty \frac{\rbra{
            \abs{\theta^\prime(\lambda)}+
            \abs{\omega^\prime(\lambda)}}(\lambda^2\wedge 1)
          }{{\theta(\lambda)}^2+{\omega(\lambda)}^2}
          \, \cd \lambda < \infty.
        \end{align}\label{item:cond-Yano2}
\end{enumerate}
Pant\'{\i}~\cite{MR3689384} proved the existence of \(h\)
under the condition
\begin{enumerate}[label=\textbf{(P)}]
  \item \ref{item:cond-Yano1} and \( \displaystyle
        \int_\bR \abs*{\Re\rbra*{\frac{1-\ce^{\ci \lambda}}{\varPsi(\lambda)}}}
        \, \cd \lambda < \infty\),\label{item:Panti-cond}
\end{enumerate}
which is weaker than~\ref{item:cond-Yano1} and~\ref{item:cond-Yano2},
and applied it to the conditioning to avoid zero
with exponential clock.
Tsukada~\cite{MR3838874} also proved the existence
of \(h\)
under the assumption
\begin{enumerate}[label=\textbf{(T)}]
  \item \ref{item:assumption} and
        \(\displaystyle
        \int_0^1 \abs*{\Im\rbra*{\frac{\lambda}{\varPsi(\lambda)}}}
        \, \cd \lambda <
        \infty,
        \)\label{item:Tsukada-b}
\end{enumerate}
which is weaker than~\ref{item:Panti-cond};
see~\cite[Proposition 15.3]{MR3838874}.

\begin{Rem}
  We do not know whether the integral
  representation~\eqref{eq:h-repre} also holds
  in the case \(m^2=\infty\).
  Yano~\cite{MR3072331} showed that, if \(X\) is symmetric,
  then~\eqref{eq:h-repre} holds.
  Tsukada~\cite{MR3838874} showed that~\ref{item:Tsukada-b} implies
  \eqref{eq:h-repre}.
\end{Rem}

\subsection{Organization}
The remainder of this paper is organized as follows.
In Section~\ref{Sec:preliminaries}, we prepare
certain general properties and
preliminary facts of L\'{e}vy processes.
In Section~\ref{Sec:h}, we
study the renormalized zero resolvent.
In Sections~\ref{Sec:exp-clock},~\ref{Sec:hitting-time},~\ref{Sec:two-hitting-time}
and~\ref{Sec:inv-time}, we discuss the penalization results
with exponential clock, hitting time clock, two-point hitting time
clock and inverse local time clock, respectively.
In Section~\ref{Sec:universal-measure}, we introduce certain universal
\(\sigma\)-finite measures to study long time
behaviors of sample paths of the penalized measure.
In Section~\ref{Sec:trans}, we study penalization in the transient case.
In Section~\ref{Sec:mart} as an appendix, we study martingale property
of \((X_t f(L_t),t\ge 0)\).

\subsection*{Acknowledgments}
The authors would like to thank Hiroshi Tsukada
for his helpful comments.

\section{Preliminaries}\label{Sec:preliminaries}
\subsection{Absolutely continuous resolvent}\label{Subsec:resolvent}
We now consider the following two conditions:
\begin{enumerate}[label=\textbf{(A\arabic*)},series=cond]
  \item\label{item:cond-not-CPP}
        The process \(X\) is not a compound Poisson process;
  \item\label{item:cond-regular}
        \(0\) is regular for itself, i.e.,
        \(\bP(T_0 = 0) = 1\).
\end{enumerate}

The next lemma is due to Kesten~\cite{MR0272059} and
Bretagnolle~\cite{MR0368175}.
\begin{Lem}[\cite{MR0272059,MR0368175}]\label{Lem:iff-cond-notCPP-regular}
  The conditions~\ref{item:cond-not-CPP} and~\ref{item:cond-regular}
  hold if and only if the following two assertions hold:
  \begin{enumerate}[resume*=cond]
    \item For each \(q>0\),
          the characteristic exponent \(\varPsi\) satisfies
          \begin{align}
            \int_\bR \Re\rbra*{\frac{1}{q+\varPsi(\lambda)}}\, \cd \lambda < \infty;
          \end{align}\label{item:cond-Reinte}
    \item We have either \(\sigma>0\) or
          \(\int_{(-1, 1)} \abs{x} \nu(\cd x) = \infty\).\label{item:cond-nu}
  \end{enumerate}
  Furthermore, under the condition~\ref{item:cond-Reinte},
  the condition~\ref{item:cond-regular} holds if and only
  if the condition~\ref{item:cond-nu} holds.
\end{Lem}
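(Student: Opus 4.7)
The plan is to split the biconditional into two separate assertions and invoke the classical Kesten--Bretagnolle analysis. The first step is to note that~\ref{item:cond-Reinte} already forces~\ref{item:cond-not-CPP}: for a compound Poisson process with rate \(c\) and jump law \(\mu\), one has \(\varPsi(\lambda)=c(1-\widehat{\mu}(\lambda))\), hence \(\abs{q+\varPsi(\lambda)}\le q+2c\) uniformly in \(\lambda\), so the integral in~\ref{item:cond-Reinte} diverges. Thus, in the presence of~\ref{item:cond-Reinte}, the assumption~\ref{item:cond-not-CPP} is automatic, and the full equivalence reduces to the second assertion, that~\ref{item:cond-regular} is equivalent to~\ref{item:cond-nu} under~\ref{item:cond-Reinte}.

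Under~\ref{item:cond-Reinte}, Fourier inversion produces a bounded continuous \(q\)-resolvent density \(r_q\) on \(\bR\), and a standard strong Markov argument at \(T_0\) combined with the resolvent equation gives the identity
\begin{align}
  \bP_x\sbra{\ce^{-qT_0}}=\frac{r_q(-x)}{r_q(0)},\qquad x\in\bR.
\end{align}
The regularity condition~\ref{item:cond-regular}, which is \(\bP_0(T_0=0)=1\), is equivalent via the Blumenthal zero-one law to \(\bP_x\sbra{\ce^{-qT_0}}\to 1\) as \(x\to 0\), and the above identity translates this into the statement that \(r_q\) is continuous at the origin with \(r_q(0+)=r_q(0-)=r_q(0)\).

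The remaining core step is the analytic characterization of when \(r_q\) is continuous at \(0\). When \(\sigma>0\) or \(\int_{(-1,1)}\abs{x}\,\nu(\cd x)=\infty\), the symbol \(\varPsi\) grows fast enough that the inverse Fourier integral for \(r_q\) converges symmetrically at \(0\); when~\ref{item:cond-nu} fails, \(X\) has bounded variation with a well-defined drift, and the pathwise picture---the process locally moves monotonically in the direction of its drift, modulated only by absolutely summable small jumps---forces \(r_q(0+)\neq r_q(0-)\) and makes \(0\) irregular for itself.

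The principal obstacle is precisely this bounded variation analysis, which is the delicate part of Bretagnolle's refinement of Kesten's original argument. Since the statement is standard preliminary material used purely as a black box in what follows, the cleanest route is to quote the result directly from~\cite{MR0272059,MR0368175} rather than reproduce the full analytic proof, which would add substantial length without contributing new content to the paper.
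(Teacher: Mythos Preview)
Your bottom line---defer to \cite{MR0272059,MR0368175}---is exactly what the paper does: the lemma is stated with attribution and no proof is given. In that sense your approach matches.

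However, the heuristic sketch you add contains an error worth flagging. You write that under~\ref{item:cond-Reinte} ``Fourier inversion produces a bounded \emph{continuous} \(q\)-resolvent density \(r_q\) on \(\bR\)''. This is not right: \ref{item:cond-Reinte} alone gives a bounded density (cf.\ Lemma~\ref{Lem:resolvent-density}), but continuity of \(r_q\) is precisely equivalent to~\ref{item:cond-regular}. If \(r_q\) were automatically continuous under~\ref{item:cond-Reinte}, your subsequent argument that regularity is equivalent to continuity of \(r_q\) at \(0\) would make~\ref{item:cond-regular} hold whenever~\ref{item:cond-Reinte} does, collapsing the whole dichotomy. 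The point is that~\ref{item:cond-Reinte} only controls \(\int \Re(1/(q+\varPsi))\,\cd\lambda\), not \(\int \abs{1/(q+\varPsi)}\,\cd\lambda\), so the inverse Fourier transform need not be continuous. Since you ultimately cite the result anyway, the simplest fix is to drop the word ``continuous'' there, or drop the heuristic paragraph entirely and just cite.
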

If the above conditions hold, it is known that
\(X\) has a bounded continuous resolvent density. See,
e.g., Theorem II.16 and Theorem II.19 of Bertoin~\cite{MR1406564}
\begin{Lem}[\cite{MR1406564}]\label{Lem:resolvent-density}
  The condition~\ref{item:cond-Reinte} holds if and only if
  \(X\) has the bounded \(q\)-resolvent density \(r_q\), for \( q > 0\),
  which satisfies
  \begin{align}
    \int_\bR f(x) r_q(x) \, \cd x
    = \bP\sbra*{\int_0^\infty \ce^{-qt}f(X_t) \, \cd t}
  \end{align}
  for all non-negative measurable functions \(f\).
  Moreover, under the condition~\ref{item:cond-Reinte},
  the condition~\ref{item:cond-regular} holds if and only if
  \(x \mapsto r_q(x)\) is continuous.
\end{Lem}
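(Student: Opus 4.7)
The proof has two halves. Throughout, the Fourier transform of the $q$-resolvent measure $U^q(0,\cd y)$ is $\phi_q(\lambda) = 1/(q + \varPsi(\lambda))$, as one checks from Fubini and the L\'{e}vy--Khintchine formula for $\bP\sbra{\ce^{\ci\lambda X_t}}$. The first equivalence is delicate because~\ref{item:cond-Reinte} controls only $\Re\phi_q \ge 0$, not $\abs{\phi_q}$; accordingly the object to Fourier-invert is the symmetrized measure $\tilde U^q(\cd y)\coloneqq U^q(0,\cd y) + U^q(0,-\cd y)$, whose Fourier transform is the non-negative integrable function $2\Re\phi_q$. If~\ref{item:cond-Reinte} holds, Fourier inversion produces a bounded continuous density $g(x) = \frac{1}{\pi}\int_0^\infty \Re\phi_q(\lambda)\cos(\lambda x)\,\cd\lambda$ for $\tilde U^q$; since $U^q(0,\cdot)$ and its reflection are positive measures whose sum is absolutely continuous, the Lebesgue decomposition forces each of them to be absolutely continuous (singular parts cannot cancel). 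This yields a density $r_q$ with $0 \le r_q(x) \le g(0) < \infty$, and the stated integral identity is immediate from the definition of a density.

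Conversely, if a bounded density $r_q$ exists, I pair $\tilde U^q$ with the Fej\'{e}r kernel $f_n$ on scale $1/n$: the bound $\int f_n\,\cd\tilde U^q \le 2\norm{r_q}_\infty$ combined with Parseval's identity and monotone convergence on $\hat f_n \nearrow 1$ gives $\frac{1}{\pi}\int_0^\infty 2\Re\phi_q(\lambda)\,\cd\lambda < \infty$, proving~\ref{item:cond-Reinte}. For the second equivalence the key tool is the Hunt-type identity $r_q(-x) = r_q(0)\,\bP_x\sbra{\ce^{-qT_0}}$, proved by the strong Markov property at $T_{-x}$ and translation invariance. Under~\ref{item:cond-regular}, right-continuity of paths together with a Feller-type argument for the process killed at $T_0$ shows that $x \mapsto \bP_x\sbra{\ce^{-qT_0}}$ is continuous at the origin (where it equals $1$), hence continuous on $\bR$, so $r_q$ is continuous. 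Conversely, continuity of $r_q$ at $0$ gives $\bP_x\sbra{\ce^{-qT_0}}\to 1$ as $x \to 0$; a short strong-Markov argument (split the path at a small time $\eps > 0$, apply right-continuity of $t \mapsto X_t$ to the inner Laplace transform $\bP_{X_\eps}\sbra{\ce^{-qT_0}}$, and send $\eps \to 0$) combined with Blumenthal's $0$--$1$ law then forces $\bP_0(T_0 = 0) = 1$, i.e.,~\ref{item:cond-regular}.

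The main obstacle is the passage from absolute continuity of $\tilde U^q$ back to that of $U^q(0,\cdot)$ in the forward half of Part 1: because only the real part of $\phi_q$ is integrable, Fourier inversion of the resolvent itself is unavailable, and the argument must rely on the positivity of both $U^q(0,\cdot)$ and its reflection to rule out cancellation of singular parts. Once this is in hand, the rest---Fourier inversion, Parseval, and the strong Markov computations---is routine.
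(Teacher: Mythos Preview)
The paper does not give its own proof of this lemma; it is quoted from Bertoin~\cite{MR1406564} (Theorems~II.16 and~II.19), so there is no in-paper argument to compare against directly. Your Part~1 argument---symmetrizing the resolvent so that its Fourier transform $2\Re\phi_q$ is non-negative and integrable, inverting to get a bounded continuous density for $\tilde U^q$, then using positivity of $U^q(0,\cdot)$ and its reflection to rule out singular parts, and conversely pairing with a Fej\'er kernel---is essentially Bertoin's route and is correct.

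Your Part~2 argument, however, contains a genuine circularity. The Hunt-type identity $r_q(-x) = r_q(0)\,\bP_x\sbra{\ce^{-qT_0}}$ does \emph{not} follow from the strong Markov property and translation invariance alone. What strong Markov actually yields is
\[
r_q(-x) \;=\; u^q_0(x,0) \;+\; r_q(0)\,\bP_x\sbra{\ce^{-qT_0}},
\]
where $u^q_0(x,\cdot)$ is the (continuous, if $r_q$ is continuous) density of the resolvent killed upon hitting $0$. The vanishing of $u^q_0(x,0)$ is exactly what encodes regularity of $0$: if $0$ is irregular, the process can accumulate occupation time near $0$ at a rate comparable to Lebesgue measure without ever hitting it, so $u^q_0(x,0)>0$ is possible. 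Indeed, setting $x=0$ in your putative identity forces $\bP_0\sbra{\ce^{-qT_0}}=1$, i.e.\ \ref{item:cond-regular} itself, so in the direction ``continuity $\Rightarrow$ \ref{item:cond-regular}'' the identity already presupposes the conclusion. Bertoin's proof of Theorem~II.19 avoids this by passing through condition~\ref{item:cond-nu} (showing that under~\ref{item:cond-Reinte} the three conditions \ref{item:cond-regular}, continuity of $r_q$, and~\ref{item:cond-nu} are equivalent), or equivalently by using that the co-excessive regularization of the density is automatically lower semicontinuous and that continuity at its maximum point forces regularity via standard excessive-function arguments. Your direction ``\ref{item:cond-regular} $\Rightarrow$ continuity'' is salvageable, since under~\ref{item:cond-regular} the Hunt identity does hold (Bertoin, Corollary~II.18); but the ``Feller-type argument'' for continuity of $x\mapsto\bP_x\sbra{\ce^{-qT_0}}$ still needs to be spelled out, as weak continuity of $\bP_x$ in $x$ interacts badly with the lower-semicontinuous functional $T_0$.
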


If \(r_q(x)\) is bounded in \(x\in\bR\),~\cite[Corollary II.18]{MR1406564}
implies that
the Laplace transform of \(T_0\)
can be represented as
\begin{align}\label{eq:exp-hit-time}
  \bP_x\sbra{\ce^{-qT_0}} = \frac{r_q(-x)}{r_q(0)}, \quad q>0, \; x \in \bR.
\end{align}

\begin{Prop}\label{Prop:r_q-represent}
  Suppose that the condition~\ref{item:assumption} holds.
  Then the bounded continuous resolvent density can be expressed as
  \begin{align}
    r_q(x) =
    \frac{1}{2\pi}\int_{-\infty}^\infty
    \frac{\ce^{-\ci\lambda x}}{q+\varPsi(\lambda)}
    \, \cd \lambda
    =\frac{1}{\pi}\int_0^\infty
    \Re\rbra*{\frac{\ce^{-\ci\lambda x}}{q+\varPsi(\lambda)}}
    \, \cd \lambda
  \end{align}
  for all \(q > 0\) and \(x\in \bR\).
\end{Prop}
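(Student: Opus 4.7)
The plan is a direct Fourier inversion, using assumption~\ref{item:assumption} both to guarantee existence of a bounded density and to license the inversion. First I would verify that~\ref{item:assumption} implies condition~\ref{item:cond-Reinte}: since
\[
  \Re\rbra*{\frac{1}{q+\varPsi(\lambda)}}
  = \frac{q+\theta(\lambda)}{\abs{q+\varPsi(\lambda)}^2} \ge 0,
\]
the real part is dominated by the modulus, so \(\int_\bR \Re(1/(q+\varPsi))\,\cd\lambda < \infty\). Lemma~\ref{Lem:resolvent-density} then produces a bounded density \(r_q\), and taking \(f\equiv 1\) in the defining identity yields \(\int_\bR r_q(x)\,\cd x = 1/q\), so \(r_q\in L^1(\bR)\).

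Next I would compute the Fourier transform of \(r_q\) explicitly. By Fubini (valid because \(\abs{\ce^{-qt}\ce^{\ci\lambda X_t}}\le \ce^{-qt}\)),
\[
  \int_\bR \ce^{\ci\lambda x}\, r_q(x)\,\cd x
  = \bP\sbra*{\int_0^\infty \ce^{-qt}\ce^{\ci\lambda X_t}\,\cd t}
  = \int_0^\infty \ce^{-(q+\varPsi(\lambda))t}\,\cd t
  = \frac{1}{q+\varPsi(\lambda)}.
\]
Since both \(r_q\) and \(\lambda\mapsto 1/(q+\varPsi(\lambda))\) lie in \(L^1(\bR)\), the Fourier inversion theorem gives
\[
  r_q(x)
  = \frac{1}{2\pi}\int_\bR \frac{\ce^{-\ci\lambda x}}{q+\varPsi(\lambda)}\,\cd\lambda
  \quad \text{for a.e.\ } x\in\bR.
\]
The right-hand side is bounded and uniformly continuous in \(x\) by dominated convergence, so it coincides with the bounded continuous version of \(r_q\); this establishes the first equality.

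For the second equality, I would exploit the symmetry \(\varPsi(-\lambda)=\theta(\lambda)-\ci\omega(\lambda)=\overline{\varPsi(\lambda)}\), which follows from \(\theta\) being even and \(\omega\) odd (see~\eqref{eq:theta}--\eqref{eq:omega}). Substituting \(\lambda\mapsto -\lambda\) on \((-\infty,0)\) yields
\[
  \int_{-\infty}^0 \frac{\ce^{-\ci\lambda x}}{q+\varPsi(\lambda)}\,\cd\lambda
  = \overline{\int_0^\infty \frac{\ce^{-\ci\lambda x}}{q+\varPsi(\lambda)}\,\cd\lambda},
\]
so summing the two halves of \(\bR\) produces \(2\int_0^\infty \Re\bigl(\ce^{-\ci\lambda x}/(q+\varPsi(\lambda))\bigr)\,\cd\lambda\), and dividing by \(2\pi\) gives the claimed representation. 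No step is substantially delicate; the only two points needing care are the Fubini justification in the Fourier transform computation and the upgrade from a.e.\ equality to pointwise equality, which is automatic since both sides are continuous.
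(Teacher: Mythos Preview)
Your argument is correct and follows exactly the route the paper indicates: the paper does not spell out a proof but simply remarks that the proposition ``can be proved using Fourier inversion formula'' and cites \cite[Lemma~2]{MR1894112} and \cite[Corollary~15.1]{MR3838874}, which is precisely the computation you carry out. Your handling of the a.e.-to-everywhere upgrade is also the right way to read the statement, since the existence of a \emph{continuous} version of \(r_q\) under~\ref{item:assumption} is in fact a by-product of this very proposition (the paper explicitly derives \ref{item:cond-not-CPP}--\ref{item:cond-nu} from~\ref{item:assumption} \emph{using} Proposition~\ref{Prop:r_q-represent}).
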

Proposition~\ref{Prop:r_q-represent} can be proved using Fourier
inversion formula; see, e.g.,~\cite[Lemma 2]{MR1894112}
and~\cite[Corollary 15.1]{MR3838874}.
By Lemmas~\ref{Lem:iff-cond-notCPP-regular}
and~\ref{Lem:resolvent-density} and Proposition~\ref{Prop:r_q-represent},
the condition
~\ref{item:assumption}
implies~\ref{item:cond-not-CPP}--\ref{item:cond-nu}.
\begin{Lem}[Tsukada {\cite[Lemma 15.5]{MR3838874}}]\label{Lem:inte-of-varPsi}
  Suppose that the condition~\ref{item:assumption} holds.
  Then the following assertions hold:
  \begin{enumerate}
    \item \(\abs{\varPsi(\lambda)} \to \infty\)
          as \(\lambda\to\pm\infty\);\label{item:varpsi-to-infty}
    \item \(\displaystyle \int_\delta^\infty \abs*{\frac{1}{\varPsi(\lambda)}}
          \, \cd \lambda < \infty\)
          for all \(\delta>0\);\label{Lem-item:infty-inte}
    \item \(\displaystyle \int_0^\delta \abs*{\frac{\lambda^2}{\varPsi(\lambda)}}
          \, \cd \lambda < \infty\)
          for all \(\delta>0\);\label{Lem-item:zero-inte}
    \item \(\displaystyle \lim_{q\to 0+}
          \int_0^\infty \abs*{\frac{q}{q+\varPsi(\lambda)}}
          \, \cd \lambda
          = 0\).
          In particular, \(qr_q(x) \to 0\) as \(q \to 0+\).\label{Lem-item:qr_q}
  \end{enumerate}
\end{Lem}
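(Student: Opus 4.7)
The plan is to derive the four assertions in order; the common thread is the Fourier-analytic identification $1/(q+\varPsi(\lambda))=\widehat{r_q}(\lambda)$ supplied by Proposition~\ref{Prop:r_q-represent}, together with elementary estimates on $\theta$ near $0$.

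For~(\ref{item:varpsi-to-infty}), I would first observe that by Lemma~\ref{Lem:resolvent-density} the density $r_q$ is bounded and continuous, and the resolvent identity with $f\equiv 1$ gives $\int_\bR r_q(x)\,\cd x=1/q$, so $r_q\in L^1(\bR)$. Proposition~\ref{Prop:r_q-represent} identifies $\lambda\mapsto 1/(q+\varPsi(\lambda))$ as the Fourier transform of $r_q$ (up to the sign in the exponent), so the Riemann--Lebesgue lemma yields $\abs{1/(q+\varPsi(\lambda))}\to 0$ as $\abs{\lambda}\to\infty$, which is equivalent to~(\ref{item:varpsi-to-infty}).

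For~(\ref{Lem-item:infty-inte}), I would first rule out zeros of $\varPsi$ in $\bR\setminus\cbra{0}$: if $\varPsi(\lambda_0)=0$ with $\lambda_0\ne 0$, then $\theta(\lambda_0)=0$ forces $\sigma=0$ and $\Supp\nu\subset (2\pi/\lambda_0)\bZ$, so $\nu(\cbra{0})=0$ and $\int(x^2\wedge 1)\,\nu(\cd x)<\infty$ would imply that $\nu$ has finite total mass and $X$ is compound Poisson with drift, contradicting~\ref{item:cond-not-CPP} (a consequence of~\ref{item:assumption}). By continuity and~(\ref{item:varpsi-to-infty}), $c_\delta:=\inf_{\lambda\ge\delta}\abs{\varPsi(\lambda)}>0$; for $q\le c_\delta/2$ the reverse triangle inequality gives $\abs{q+\varPsi(\lambda)}\ge\abs{\varPsi(\lambda)}/2$ on $[\delta,\infty)$, hence $1/\abs{\varPsi}\le 2/\abs{q+\varPsi}$, and~(\ref{Lem-item:infty-inte}) follows from~\ref{item:assumption}. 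For~(\ref{Lem-item:zero-inte}), I would apply Jordan's inequality $1-\cos u\ge 2u^2/\pi^2$ for $\abs{u}\le\pi$ to~\eqref{eq:theta} to obtain
\begin{align}
\theta(\lambda)\ge \frac{\sigma^2}{2}\lambda^2+\frac{2\lambda^2}{\pi^2}\int_{\abs{x}\le\pi/\abs{\lambda}}x^2\,\nu(\cd x).
\end{align}
Under~\ref{item:assumption} the process is neither compound Poisson nor pure drift (either would violate the integrability at infinity), so $\sigma>0$ or $\nu\ne 0$; in either case the right-hand side is at least $c\lambda^2$ for some $c>0$ and all sufficiently small $\abs{\lambda}$. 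Since $\abs{\varPsi}\ge\theta$, the quotient $\lambda^2/\abs{\varPsi(\lambda)}$ is bounded near $0$, yielding~(\ref{Lem-item:zero-inte}).

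For~(\ref{Lem-item:qr_q}), given $\eps>0$ choose $\delta<\eps/2$; the trivial bound $q/\abs{q+\varPsi(\lambda)}\le 1$ gives $\int_0^\delta q/\abs{q+\varPsi(\lambda)}\,\cd\lambda\le\delta<\eps/2$. On $[\delta,\infty)$, the estimate from~(\ref{Lem-item:infty-inte}) shows $q/\abs{q+\varPsi(\lambda)}\le 2q/\abs{\varPsi(\lambda)}$ for $q<c_\delta/2$, which tends to $0$ pointwise and is dominated by $c_\delta/\abs{\varPsi(\cdot)}\in L^1([\delta,\infty))$, so dominated convergence gives $\int_\delta^\infty q/\abs{q+\varPsi(\lambda)}\,\cd\lambda\to 0$ as $q\to 0+$. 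The ``in particular'' statement then follows by substituting this bound into the Fourier representation of $r_q$ in Proposition~\ref{Prop:r_q-represent}. I expect the main subtlety to lie in excluding nonzero real zeros of $\varPsi$ in the proof of~(\ref{Lem-item:infty-inte}); once this short lattice-type argument is in place, the remainder is essentially bookkeeping with~\ref{item:assumption} and the L\'{e}vy--Khintchine structure.
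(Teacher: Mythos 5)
The paper itself supplies no proof of this lemma; it is cited from Tsukada \cite[Lemma 15.5]{MR3838874}, so I assess your argument on its own terms rather than by comparison. Your overall plan is sound and your treatment of (\ref{item:varpsi-to-infty}), (\ref{Lem-item:zero-inte}) and (\ref{Lem-item:qr_q}) is correct: identifying \(1/(q+\varPsi)\) as the Fourier transform of the integrable function \(r_q\) and invoking Riemann--Lebesgue for (\ref{item:varpsi-to-infty}) is clean, and the Jordan lower bound \(\theta(\lambda)\ge c\lambda^2\) near the origin for (\ref{Lem-item:zero-inte}) is exactly the right tool.

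There is, however, an inequality pointing the wrong way in (\ref{Lem-item:infty-inte}). You invoke the reverse triangle inequality to get \(\abs{q+\varPsi(\lambda)}\ge\abs{\varPsi(\lambda)}/2\) on \([\delta,\infty)\) for small \(q\), and then claim ``hence \(1/\abs{\varPsi}\le 2/\abs{q+\varPsi}\)'' — but the lower bound on \(\abs{q+\varPsi}\) gives \(1/\abs{q+\varPsi}\le 2/\abs{\varPsi}\), which is the opposite of what you need (indeed, since \(\theta\ge 0\), one even has \(\abs{q+\varPsi}\ge\abs{\varPsi}\) unconditionally, which would never yield an upper bound on \(1/\abs{\varPsi}\) from \(\int 1/\abs{q+\varPsi}<\infty\)). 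The correct step is the \emph{forward} triangle inequality: on \([\delta,\infty)\), if \(q\le c_\delta\coloneqq\inf_{\lambda\ge\delta}\abs{\varPsi(\lambda)}\), then \(\abs{q+\varPsi(\lambda)}\le q+\abs{\varPsi(\lambda)}\le 2\abs{\varPsi(\lambda)}\), whence \(1/\abs{\varPsi}\le 2/\abs{q+\varPsi}\) and (\ref{Lem-item:infty-inte}) follows from~\ref{item:assumption}. (In (\ref{Lem-item:qr_q}) the direction you use is the needed one, but there \(\abs{q+\varPsi}\ge\abs{\varPsi}\) already suffices without the factor of two.) A second, smaller point: in ruling out real zeros of \(\varPsi\) you conclude ``\(X\) is compound Poisson with drift, contradicting (A1).'' But a compound Poisson process \emph{with nonzero drift} is not a compound Poisson process, so (A1) alone is not contradicted; the clean way to close this is to observe that \(\sigma=0\) and \(\nu\) finite force \(\int_{(-1,1)}\abs{x}\,\nu(\cd x)<\infty\), contradicting~\ref{item:cond-nu} (equivalently, \(0\) is not regular for itself, contradicting~\ref{item:cond-regular}). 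With these two fixes the proof is complete.
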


\subsection{Local time and its excursion measure}\label{Subsec:LT-em}
Assume the conditions~\ref{item:cond-not-CPP} and~\ref{item:cond-regular} hold.
Then we can define local time at \(0\), which we denote by \(L=(L_t,t\ge 0)\).
Note that \(L\) is continuous in \(t\) and satisfies
\begin{align}\label{eq:regularity-of-L}
  \bP_x\sbra*{\int_0^\infty \ce^{-qt} \cd L_t} = r_q(-x),
  \quad q>0,\;x \in \bR.
\end{align}
See, e.g.,~\cite[Section V]{MR1406564}.
In particular, \(r_q(x)\) is non-decreasing as \(q \to 0+\).
Let \(\eta=\rbra{\eta_l, l \ge 0}\) denote
the right-continuous inverse of \(L\)
which is given as
\(\eta_l = \inf\cbra{t > 0\colon L_t > l}\).
Then the process \((\eta, \bP)\) is a possibly killed
subordinator, and its Laplace transform is
\(\bP\sbra{\ce^{-q\eta_l}} = \ce^{-l/r_q(0)}\), for
\(l,q > 0\),
see, e.g.,~\cite[Proposition V.4]{MR1406564}.

Now we can apply It\^{o}'s excursion theory.
Let \(n\) denote the characteristic measure of excursions away from the origin.
We denote \(e_l\) for excursion which starts at local time \(l\).
Then we see that the subordinator \(\eta\) has no drift and its L\'{e}vy measure is
\(n(T_0\in \cd x)\).
In particular, we have
\begin{align}\label{eq:exp-nt0}
  \ce^{-l/r_q(0)} = \bP\sbra{\ce^{-q\eta_l}}
  = \exp\rbra{-l n\sbra{1-\ce^{-qT_0}}},
  \quad l \ge 0.
\end{align}
This implies that
\begin{align}\label{eq:rel-n-r_q}
  n\sbra{1 - \ce^{-qT_0}} = \frac{1}{r_q(0)},
\end{align}
which is also obtained from~\cite[(3.16)]{MR2552915}.
Now set
\begin{align}
  \kappa = \lim_{q\to 0+}\frac{1}{r_q(0)} = n(T_0 =\infty).\label{eq:kappa}
\end{align}
It is known that \(\kappa = 0\) (resp.\ \(\kappa > 0\))
if and only if \(X\) is recurrent (resp.\ transient);
see, e.g.,~\cite[Theorem I.17]{MR1406564}
and~\cite[Theorem 37.5]{MR1739520}.
It is also known that
\(X\) is recurrent if and only if \(X\) is point recurrent;
see, e.g.,~\cite[Remark 43.12]{MR1739520}
(see also~\cite[Excercise II.6.4]{MR1406564}).
Under the assumption~\ref{item:assumption},
we can prove this fact by using Theorem~\ref{Thm:exist-h};
in fact,
by equations~\eqref{eq:exp-hit-time} and~\eqref{eq:kappa},
it holds that, for \(x\in\bR\),
\begin{align}
  \bP_x(T_0<\infty) &= \lim_{q\to 0+}\bP_x\sbra{\ce^{-qT_0}}
  = \lim_{q\to 0+}\frac{r_q(-x)}{r_q(0)}
  = 1+\lim_{q\to 0+}\frac{h_q(x)}{r_q(0)}
  =1.
\end{align}

We define \(D = \cbra{l\ge 0\colon \eta_{l-} < \eta_l}\).
Then the following formula is well-known in the excursion theory.
\begin{Lem}[{Compensation formula; see e.g.,
  Bertoin~\cite[Corollary IV.11]{MR1406564}}]\label{Lem:compensation-formula}
  Let \(F(t,\omega,e)\) be a measurable functional
  on \([0, \infty) \times \cD \times \cD\) such that,
  for every fixed \(e\in \cD\), the process
  \((F(t,\cdot,e), t\ge 0)\) is \((\cF_t)\)-predictable. Then
  \begin{align}
    \bP\sbra*{\sum_{l\in D} F(\eta_{l-}, X, e_l)}
    = \bP\otimes \widetilde{n}
    \sbra*{\int_0^\infty \cd L_t\, F(t, X, \widetilde{X})},
  \end{align}
  where the symbol \(\widetilde{\hspace{12pt}}\) means independence.
\end{Lem}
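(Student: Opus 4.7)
The plan is to derive the compensation formula from It\^o's description of excursions as a Poisson point process, combined with a change of variable between local time and real time.

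First, I would recall It\^o's excursion theorem: under the standing conditions~\ref{item:cond-not-CPP} and~\ref{item:cond-regular}, the random point measure
\begin{align}
  N(\cd l, \cd e) = \sum_{l \in D} \delta_{(l, e_l)}(\cd l, \cd e)
\end{align}
on \((0,\infty)\times \cD\) is a Poisson point measure with intensity \(\cd l \otimes n(\cd e)\). As a consequence, the classical Campbell / compensation identity gives, for any non-negative Borel functional \(G(l,e)\),
\begin{align}
  \bP\sbra*{\sum_{l\in D} G(l, e_l)} = \int_0^\infty \cd l \int G(l, e)\, n(\cd e).
\end{align}

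Next, I would handle the joint dependence on \(X\) by a monotone class argument. It suffices to establish the identity on simple predictable functionals of the product form \(F(t,\omega,e) = 1_A(\omega)\, 1_{(s,\infty)}(t)\, H(e)\) with \(s \ge 0\), \(A \in \cF_s\) and \(H\ge 0\) Borel on \(\cD\). The strong Markov property at the \((\cF_t)\)-stopping time \(s\) and the regenerative structure of excursions at the local time level \(L_s\) imply that the restricted point process \(\sum_{l \in D,\, l>L_s} \delta_{(l-L_s,e_l)}\) is independent of \(\cF_s\) and again a Poisson point measure with intensity \(\cd l\otimes n(\cd e)\). Applying the Campbell identity above to this shifted point process and using the substitution
\begin{align}
  \int_0^\infty g(\eta_{l-})\, \cd l = \int_0^\infty g(t)\, \cd L_t,
\end{align}
which holds because \(\cd L_t\) is the image of Lebesgue measure on the local time axis under \(l \mapsto \eta_{l-}\) (equivalently, \(L\) is constant on each excursion interval), I would obtain the desired identity on simple predictable \(F\). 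Note that excursions indexed by \(l \le L_s\) contribute nothing since their starting times \(\eta_{l-}\) lie in \([0,s]\), where \(F\) vanishes by construction.

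Finally, a monotone class / \(\pi\)-\(\lambda\) argument extends the identity from the generating class of simple product predictable functionals to the general class of non-negative jointly measurable \(F(t,\omega,e)\) that are \((\cF_t)\)-predictable in \(t\) for every fixed \(e\). The main technical obstacle I expect is the joint measurability: one must verify that \((l,\omega,e)\mapsto F(\eta_{l-}(\omega),\omega,e)\) is measurable with respect to the product of the predictable \(\sigma\)-field and the Borel \(\sigma\)-field on \(\cD\), and that this measurability is preserved through the monotone class extension, so that the Fubini-type exchange of the sum (integral against \(N\)) and the expectation is legitimate. Once this measurability bookkeeping is in place, the formula follows by combining the Campbell identity for \(N\) with the change of variable \(\cd l\leftrightarrow \cd L_t\).
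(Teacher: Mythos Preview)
The paper does not give its own proof of this lemma; it is stated as a known result with a citation to Bertoin~\cite[Corollary IV.11]{MR1406564}. Your sketch is the standard derivation: It\^o's excursion theorem realizes \((l,e_l)_{l\in D}\) as a Poisson point process with intensity \(\cd l\otimes n(\cd e)\), the Campbell identity gives the formula for functionals depending only on \((l,e)\), and the change of variable \(\int_0^\infty g(\eta_{l-})\,\cd l=\int_0^\infty g(t)\,\cd L_t\) together with a monotone class argument extends it to predictable dependence on \(X\).

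One small technical point: in your treatment of the simple predictable case \(F(t,\omega,e)=1_A(\omega)1_{(s,\infty)}(t)H(e)\), the independence you invoke is not quite the Markov property at the deterministic time \(s\), since \(s\) may lie inside an excursion interval. What is actually used is the strong Markov property at \(\eta_{L_s}\), the first return to \(0\) after \(s\); equivalently, one works with the filtration \((\cF_{\eta_l})_{l\ge 0}\) in the local-time scale and applies the compensation formula for Poisson point processes relative to that filtration. Your restriction to \(l>L_s\) and shift by \(L_s\) effectively implement this, but the justification should cite the strong Markov property at \(\eta_{L_s}\) rather than at \(s\). With that adjustment, the argument is correct and matches the standard textbook proof.
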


Let \(L^a =\rbra{L_t^a, t\ge 0}\) denote the local time at \(a\in \bR\)
which is normalized by
\begin{align}\label{eq:regularity-of-L^a}
  \bP_x\sbra*{\int_0^\infty \ce^{-qt} \cd L^a_t} = r_q(a-x),
  \quad  q>0,\;x \in \bR.
\end{align}
We denote by \(\eta^a=(\eta^a_u, u\ge 0)\)
the right-continuous inverse of \(L^a\) given by
\(\eta^a_u=\inf\cbra{t>0\colon L_t^a>u}\).
We denote by \(n^a\) the characteristic measure of excursions away from \(a\).

\section{The renormalized zero resolvent}\label{Sec:h}
Let us consider the existence and properties of
the renormalized zero resolvent in Theorems~\ref{Thm:exist-h} and~\ref{Thm:property-h}.
Recall that we assume \(X\) is recurrent, i.e., \(\kappa=0\),
and assume the condition~\ref{item:assumption}.
\subsection{Key lemmas for the renormalized zero resolvent}
To show Theorems~\ref{Thm:exist-h} and~\ref{Thm:property-h},
we prepare some lemmas.
Recall that \(m^2\) has been introduced in~\eqref{eq:second-moment};
\(m^2=\bP\sbra{X_1^2}\).
\begin{Lem}\label{Lem:psi-conv}
  The following assertions hold.
  \begin{enumerate}
    \item If \(m^2<\infty\), then
          \begin{align}
            \varPsi(\lambda)
            = \frac{1}{2}\sigma^2\lambda^2
            + \int_\bR \rbra{1-\ce^{\ci \lambda x}+\ci \lambda x}\nu(\cd x),
          \end{align}\label{Lem-item:psi-repre}
          and
          \begin{align}\label{eq:psi/lambda2}
            \lim_{\lambda \to 0}\frac{\varPsi(\lambda)}{\lambda^2}
            =\lim_{\lambda \to 0}\frac{\theta(\lambda)}{\lambda^2}
            = \frac{m^2}{2}
            =\frac{1}{2}\rbra*{\sigma^2 + \int_\bR x^2 \nu(\cd x)}.
          \end{align}\label{Lem-item:psi-conv}
    \item If \(m^2=\infty\), then
          \begin{align}
            \lim_{\lambda \to 0}\frac{\lambda^2}{\varPsi(\lambda)}
            =\lim_{\lambda \to 0}\frac{\lambda^2}{\theta(\lambda)}=0.
          \end{align}
  \end{enumerate}
\end{Lem}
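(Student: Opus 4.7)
The plan is to split the proof into the two cases and rely on the standard Taylor bounds for $1-e^{i\lambda x}+i\lambda x$ and $1-\cos\lambda x$, combined with dominated convergence (case $m^2<\infty$) and Fatou's lemma (case $m^2=\infty$).

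For part~(i), I would first reduce the Lévy--Khintchine formula to the centered form. Since $m^2<\infty$, we have $\int_{|x|\ge 1}x^2\nu(\cd x)<\infty$, hence $\int_{|x|\ge 1}|x|\nu(\cd x)<\infty$, so $\bP[|X_1|]<\infty$ and I can differentiate $\bP[\ce^{\ci\lambda X_t}]=\ce^{-t\varPsi(\lambda)}$ at $\lambda=0$ to obtain $\bP[X_1]=-v+\int_{|x|\ge 1}x\,\nu(\cd x)$. Recurrence forces $\bP[X_1]=0$: otherwise the strong law of large numbers gives $X_t/t\to\bP[X_1]\ne 0$ a.s., hence $|X_t|\to\infty$ a.s., contradicting point recurrence as recalled in Subsection~\ref{Subsec:LT-em}. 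Thus $v=\int_{|x|\ge 1}x\,\nu(\cd x)$; substituting this back into the Lévy--Khintchine formula and recombining the $\ci\lambda x$ terms yields the claimed representation. Absolute convergence of the integral is secured by the bound $|1-\ce^{\ci\lambda x}+\ci\lambda x|\le \tfrac12(\lambda x)^2$ on $\{|x|<1\}$ and by $|1-\ce^{\ci\lambda x}+\ci\lambda x|\le 2+|\lambda x|$ together with $\int_{|x|\ge 1}|x|\nu(\cd x)<\infty$ on $\{|x|\ge 1\}$.

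The limit $\varPsi(\lambda)/\lambda^2\to m^2/2$ then follows from the centered representation: writing
\begin{align}
\frac{\varPsi(\lambda)}{\lambda^2}=\frac{\sigma^2}{2}+\int_\bR\frac{1-\ce^{\ci\lambda x}+\ci\lambda x}{\lambda^2}\,\nu(\cd x),
\end{align}
the integrand converges pointwise to $x^2/2$ by Taylor expansion and is dominated by $x^2/2$ uniformly in $\lambda$ (using the inequality $|\ce^{\ci t}-1-\ci t|\le t^2/2$), and $\int x^2\,\nu(\cd x)<\infty$; so dominated convergence gives the limit $\tfrac12(\sigma^2+\int x^2\nu(\cd x))=m^2/2$. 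For $\theta(\lambda)/\lambda^2$, either take real parts of the above, or apply the same argument directly with $(1-\cos\lambda x)/\lambda^2\to x^2/2$ and the bound $|1-\cos\lambda x|\le(\lambda x)^2/2$.

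For part~(ii), the key is the nonnegative representation $\theta(\lambda)/\lambda^2=\sigma^2/2+\int(1-\cos\lambda x)/\lambda^2\,\nu(\cd x)$, for which Fatou's lemma applies without any integrability assumption:
\begin{align}
\liminf_{\lambda\to 0}\frac{\theta(\lambda)}{\lambda^2}\ge\frac{\sigma^2}{2}+\int_\bR\frac{x^2}{2}\,\nu(\cd x)=\frac{m^2}{2}=\infty,
\end{align}
giving $\lambda^2/\theta(\lambda)\to 0$. Since $|\varPsi(\lambda)|\ge\theta(\lambda)$ (as $\theta=\Re\varPsi\ge 0$), we also get $|\lambda^2/\varPsi(\lambda)|\le \lambda^2/\theta(\lambda)\to 0$.

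The only genuinely nontrivial step is the reduction to the centered representation in part~(i): one must invoke recurrence together with the strong law to eliminate the drift correction $\int_{|x|\ge 1}x\,\nu(\cd x)-v$. Everything else is routine domination/Fatou applied to the standard Taylor bounds.
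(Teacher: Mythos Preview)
Your proof is correct and follows essentially the same route as the paper's: reduce to the centered L\'{e}vy--Khintchine form via recurrence (forcing $\bP[X_1]=0$), then pass to the limit, with Fatou's lemma handling part~(ii). The only variation is that for $\varPsi(\lambda)/\lambda^2\to m^2/2$ the paper applies l'H\^{o}pital's rule (using $\varPsi''(0)=m^2$) rather than your dominated convergence argument with the Taylor bound $|\ce^{\ci t}-1-\ci t|\le t^2/2$; both are standard and equally valid.
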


\begin{proof}
  It is well-known
  (see, e.g.,~\cite[Theorem 2.7]{MR3155252})
  that \(X\) has finite variance
  if and only if \(\int_\bR x^2 \nu(\cd x) < \infty\).
  We first assume that
  \(\int_\bR x^2 \nu(\cd x) < \infty\). Then we know
  \(\bP\sbra{X_1}\) and \(\bP\sbra{X_1^2}\) are finite and
  \begin{gather}
    \bP\sbra{X_1}
    =\ci\varPsi^\prime(0)
    = - v + \int_{\bR\setminus (-1, 1)} x \nu(\cd x), \\
    \bP\sbra{X_1^2}
    = \varPsi^{\prime\prime}(0)
    = \sigma^2 + \int_\bR x^2 \nu(\cd x) +
    {\bP\sbra{X_1}}^2.
  \end{gather}
  Since \(X\) is recurrent, we have \(\bP[X_1] = 0\);
  see, e.g.,~\cite[Problem 7.2]{MR3155252}.
  This implies that
  \begin{align}
    \varPsi(\lambda)
    = \frac{1}{2} \sigma^2 \lambda
    + \int_\bR \rbra{1-\ce^{\ci\lambda x} + \ci \lambda x} \nu(\cd x).
  \end{align}
  By l'Hôpital's rule, we obtain
  \begin{align}
    \lim_{\lambda\to 0} \frac{\varPsi(\lambda)}{\lambda^2}
    = \lim_{\lambda\to 0} \frac{\varPsi^\prime(\lambda)}{2\lambda}
    = \frac{\varPsi^{\prime\prime}(0)}{2} = \frac{m^2}{2}.
  \end{align}
  Taking real parts on both sides, we also have
  \begin{align}
    \lim_{\lambda\to 0}\frac{\theta(\lambda)}{\lambda^2} = \frac{m^2}{2}.
  \end{align}
  We next assume that \(\int_\bR x^2 \nu(\cd x) = \infty\).
  Then we know \(m^2 = \infty\).
  By~\eqref{eq:theta}, we have
  \begin{align}
    \abs*{\frac{\varPsi(\lambda)}{\lambda^2}}
    \ge \abs*{\frac{\theta(\lambda)}{\lambda^2}}
    \ge \int_\bR \frac{1-\cos \lambda x}{\lambda^2}\nu(\cd x).
  \end{align}
  Using Fatou's lemma and l'Hôpital's rule, we obtain
  \begin{align}
    \liminf_{\lambda\to 0}\abs*{\frac{\varPsi(\lambda)}{\lambda^2}}
    \ge \liminf_{\lambda\to 0}\abs*{\frac{\theta(\lambda)}{\lambda^2}}
     & \ge \int_\bR \liminf_{\lambda\to 0}
    \frac{1-\cos \lambda x}{\lambda^2}
    \nu(\cd x)                                       \\
     & = \int_\bR \frac{x^2}{2} \nu(\cd x) = \infty.
  \end{align}
  Therefore the proof is complete.
\end{proof}

When \(m^2<\infty\), the next lemma
is essential for the renormalized zero resolvent.
\begin{Lem}\label{Lem:fin-omega-integral}
  Assume \(m^2 < \infty\).
  Then it holds that
  \begin{align}\label{eq:fin-omega-integral}
    \int_\bR \abs*{\frac{\omega(\lambda)}{\lambda^3}} \, \cd \lambda \
    < \infty.
  \end{align}
  Consequently,
  Tsukada's condition~\ref{item:Tsukada-b} holds.
\end{Lem}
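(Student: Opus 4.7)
The plan is to express $\omega$ in a form whose decay near $\lambda=0$ is cubic (thanks to recurrence), and then bound $\int|\omega(\lambda)|/|\lambda|^3\,\cd\lambda$ by swapping the orders of integration. Condition~\ref{item:Tsukada-b} will then follow by controlling $|\varPsi|^{2}$ from below near the origin via the quadratic asymptotics of $\theta$.

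First, I would use that \((X,\bP)\) is recurrent and \(m^2<\infty\). By (the argument in) Lemma~\ref{Lem:psi-conv}, \(\bP[X_1]=0\) and \(\varPsi\) admits the representation without a cutoff:
\begin{align}
  \varPsi(\lambda)=\tfrac{1}{2}\sigma^2\lambda^2+\int_\bR\rbra{1-\ce^{\ci\lambda x}+\ci\lambda x}\nu(\cd x).
\end{align}
Taking imaginary parts yields the centered form
\begin{align}
  \omega(\lambda)=\int_\bR\rbra{\lambda x-\sin\lambda x}\,\nu(\cd x),
\end{align}
so that \(|\omega(\lambda)|\le\int_\bR|\lambda x-\sin\lambda x|\,\nu(\cd x)\). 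The point of this rewriting is that \(y\mapsto y-\sin y\) vanishes to third order at \(y=0\), which is what produces the integrability of \(|\omega(\lambda)|/|\lambda|^3\).

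Next I would apply Fubini. Using the elementary bounds \(|y-\sin y|\le |y|^3/6\) for \(|y|\le 1\) and \(|y-\sin y|\le|y|+1\le 2|y|\) for \(|y|\ge 1\), one computes for each fixed \(x\neq 0\)
\begin{align}
  \int_\bR\frac{|\lambda x-\sin\lambda x|}{|\lambda|^3}\,\cd\lambda
  \le\int_{|\lambda|\le 1/|x|}\frac{|x|^3}{6}\,\cd\lambda
  +\int_{|\lambda|>1/|x|}\frac{2|x|}{|\lambda|^2}\,\cd\lambda
  \le C x^2,
\end{align}
for an absolute constant \(C\). Swapping the order of integration (by non-negativity) therefore gives
\begin{align}
  \int_\bR\frac{|\omega(\lambda)|}{|\lambda|^3}\,\cd\lambda
  \le C\int_\bR x^2\,\nu(\cd x)<\infty,
\end{align}
using that \(m^2<\infty\) is equivalent to \(\int_\bR x^2\,\nu(\cd x)<\infty\). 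This establishes~\eqref{eq:fin-omega-integral}.

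Finally, to deduce Tsukada's condition~\ref{item:Tsukada-b}, I would write \(\Im(\lambda/\varPsi(\lambda))=-\lambda\omega(\lambda)/|\varPsi(\lambda)|^2\). By Lemma~\ref{Lem:psi-conv}, \(\theta(\lambda)/\lambda^2\to m^2/2\) as \(\lambda\to 0\), so there exist \(\delta\in(0,1]\) and \(c>0\) with \(|\varPsi(\lambda)|^2\ge\theta(\lambda)^2\ge c\lambda^4\) on \((0,\delta]\); on \([\delta,1]\), \(|\varPsi|\) is continuous and bounded below (this uses, via~\ref{item:assumption} together with~\ref{item:cond-not-CPP}, that \(\varPsi\) has no nonzero real zeros). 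Combining these yields
\begin{align}
  \int_0^1\abs*{\Im\rbra*{\frac{\lambda}{\varPsi(\lambda)}}}\cd\lambda
  \le\frac{1}{c}\int_0^\delta\frac{|\omega(\lambda)|}{|\lambda|^3}\,\cd\lambda
  +C'\int_\delta^1|\omega(\lambda)|\,\cd\lambda<\infty,
\end{align}
which is~\ref{item:Tsukada-b}. The main technical point is the Fubini computation with the split bound; the only mildly delicate subtlety is the lower bound on \(|\varPsi|\) on the compact interval \([\delta,1]\), which is however harmless since it is handled by continuity and the absence of nonzero zeros under the standing hypotheses.
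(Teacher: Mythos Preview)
Your proof is correct and follows essentially the same route as the paper: both rewrite \(\omega(\lambda)=\int_\bR(\lambda x-\sin\lambda x)\,\nu(\cd x)\) via Lemma~\ref{Lem:psi-conv}, then apply Fubini to reduce \(\int_\bR|\omega(\lambda)|/|\lambda|^3\,\cd\lambda\) to \(\int_\bR x^2\,\nu(\cd x)<\infty\), and finally deduce~\ref{item:Tsukada-b} by bounding \(|\lambda^2/\varPsi(\lambda)|\) near the origin. The only cosmetic difference is that the paper evaluates \(\int_0^\infty(\xi-\sin\xi)/\xi^3\,\cd\xi\) exactly via a change of variables, whereas you use the split estimate \(|y-\sin y|\le\min(|y|^3/6,2|y|)\); and the paper writes \(|\Im(\lambda/\varPsi)|=|\lambda^2/\varPsi|^2|\omega/\lambda^3|\) in one line over \((0,1)\) rather than splitting at \(\delta\), but this is the same computation.
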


\begin{proof}[Proof of Lemma~\ref{Lem:fin-omega-integral}]
  By Lemma~\ref{Lem:psi-conv},
  we have
  \begin{align}
    \omega(\lambda)
    = \int_{\bR} \rbra{\lambda x - \sin \lambda x}
    \nu(\cd x).
  \end{align}
  Hence we have
  \begin{align}
    \int_\bR \abs*{\frac{\omega(\lambda)}{\lambda^3}} \, \cd \lambda
     & = \int_\bR \abs*{\int_\bR
      \frac{\lambda x - \sin \lambda x}{\lambda^3}
      \nu(\cd x)}
    \, \cd \lambda                                  \\
     & \le \rbra*{\int_{-\infty}^0 + \int_0^\infty}
    \rbra*{\int_{-\infty}^0 + \int_0^\infty}
    \abs*{\frac{\lambda x - \sin \lambda x}{\lambda^3}}
    \nu(\cd x)
    \, \cd \lambda.
  \end{align}
  Since \(\lambda x - \sin \lambda x \ge 0\)
  for \((x, \lambda) \in {(0, \infty)}^2\),
  it holds that
  \begin{align}
    \int_0^\infty \int_0^\infty
    \abs*{\frac{\lambda x - \sin \lambda x}{\lambda^3}}
    \nu(\cd x)
    \, \cd \lambda
     & =
    \int_0^\infty \int_0^\infty
    \frac{\lambda x - \sin \lambda x}{\lambda^3}
    \, \cd \lambda \,
    \nu(\cd x)                                                \\
     & =
    \int_0^\infty x^2 \nu(\cd x)
    \int_0^\infty
    \frac{\xi - \sin \xi}{\xi^3} \, \cd\xi                    \\
     & = \frac{4}{\pi} \int_0^\infty x^2 \nu(\cd x) < \infty.
  \end{align}
  Other integrals are also proved to be finite by the same discussions,
  and we obtain~\eqref{eq:fin-omega-integral}.
  By~(\ref{Lem-item:psi-repre}) of Lemma~\ref{Lem:psi-conv},
  we see that \(\lambda^2/\varPsi(\lambda)\) is bounded near \(\lambda=0\).
  Thus we have
  \begin{align}
    \int_0^1 \abs*{\Im\rbra*{\frac{\lambda}{\varPsi(\lambda)}}}\,\cd\lambda
    = \int_0^1 \abs*{\frac{\lambda^2}{\varPsi(\lambda)}}^2
    \abs*{\frac{\omega(\lambda)}{\lambda^3}}\, \cd \lambda <\infty.
  \end{align}
  Since~\ref{item:assumption} is assumed in this section,
  this implies that Tsukada's condition~\ref{item:Tsukada-b} holds.
\end{proof}

\begin{Lem}\label{Lem:exist-hheq}
  The following assertions hold.
  \begin{enumerate}
    \item \(\displaystyle
          h^S(x)
          \coloneqq \lim_{q\to 0+}(h_q(x) + h_q(-x))
          = \frac{2}{\pi}
          \int_0^\infty
          \Re\rbra*{\frac{1-\cos \lambda x}{\varPsi(\lambda)}} \, \cd \lambda,
          \quad\text{for \(x\in\bR\).}
          \)\label{Lem-item:exist-h^S}
    \item \(\displaystyle
          \lim_{x\to\pm\infty} \frac{h^S(x)}{\abs{x}} =
          \frac{2}{m^2} \in [0,\infty).
          \)\label{Lem-item:h^S/x}
    \item For \(x, y \in \bR\),
          \begin{align}
            h^D(x, y)
             & \coloneqq \lim_{q\to 0+}\cbra{h_q(y+2x) - 2h_q(y+x) + h_q(y)} \\
             & = \frac{2}{\pi} \int_0^\infty
            \Re\rbra*{\ce^{\ci \lambda(y+x)}
              \frac{1-\cos \lambda x}{\varPsi(\lambda)}} \, \cd \lambda.
          \end{align}
          Moreover, it holds that
          \(\lim_{y\to\pm\infty} h^D(x, y) = 0\).\label{Lem-item:h^D}
  \end{enumerate}
\end{Lem}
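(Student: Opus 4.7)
All three parts rest on the Fourier representation~\eqref{eq:def-h_q} of \(h_q\) together with a dominated convergence argument as \(q\to 0+\). The key uniform-in-\(q\) bound is
\begin{equation*}
  \abs*{\frac{1}{q+\varPsi(\lambda)}} \le \frac{1}{\abs{\varPsi(\lambda)}},
\end{equation*}
which follows from \(\abs{q+\varPsi(\lambda)}^2 = (q+\theta(\lambda))^2 + \omega(\lambda)^2 \ge \theta(\lambda)^2+\omega(\lambda)^2 = \abs{\varPsi(\lambda)}^2\). The prefactor \(1-\cos\lambda x\) is controlled by \(x^2\lambda^2/2\) near zero and by \(2\) at infinity, and Lemma~\ref{Lem:inte-of-varPsi}(ii)--(iii) then make the dominating function \((1-\cos\lambda x)/\abs{\varPsi(\lambda)}\) integrable on \((0,\infty)\).

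\textbf{Parts (i) and (iii).} For (i), adding the representations of \(h_q(x)\) and \(h_q(-x)\) gives
\begin{equation*}
  h_q(x) + h_q(-x) = \frac{2}{\pi}\int_0^\infty(1-\cos\lambda x)\Re\rbra*{\frac{1}{q+\varPsi(\lambda)}}\,\cd\lambda,
\end{equation*}
and dominated convergence with the bound above yields the stated formula for \(h^S\). For (iii), the algebraic identity \(-\ce^{\ci\lambda y}(\ce^{\ci\lambda x}-1)^2 = 2\ce^{\ci\lambda(y+x)}(1-\cos\lambda x)\) transforms the second difference into
\begin{equation*}
  h_q(y+2x) - 2h_q(y+x) + h_q(y) = \frac{2}{\pi}\int_0^\infty(1-\cos\lambda x)\Re\rbra*{\frac{\ce^{\ci\lambda(y+x)}}{q+\varPsi(\lambda)}}\,\cd\lambda,
\end{equation*}
so the same domination yields the integral representation of \(h^D(x,y)\). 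For the decay \(h^D(x,y)\to 0\) as \(y\to\pm\infty\), I would expand \(\Re(\ce^{\ci\lambda(y+x)}/\varPsi)=\cos\lambda(y+x)\Re(1/\varPsi)-\sin\lambda(y+x)\Im(1/\varPsi)\); both amplitude functions \((1-\cos\lambda x)\Re(1/\varPsi)\) and \((1-\cos\lambda x)\Im(1/\varPsi)\) lie in \(L^1(0,\infty)\) by the same dominating bound, so the Riemann--Lebesgue lemma closes the argument.

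\textbf{Part (ii) and main obstacle.} For \(x>0\), substituting \(\mu=\lambda x\) in the representation of \(h^S\) yields
\begin{equation*}
  \frac{h^S(x)}{x} = \frac{2}{\pi}\int_0^\infty\frac{1-\cos\mu}{\mu^2}\,\Phi_x(\mu)\,\cd\mu,
  \qquad \Phi_x(\mu) \coloneqq \rbra*{\frac{\mu}{x}}^2\Re\rbra*{\frac{1}{\varPsi(\mu/x)}}.
\end{equation*}
By Lemma~\ref{Lem:psi-conv}, \(\Phi_x(\mu)\to 2/m^2\) pointwise (with the convention \(2/\infty=0\)) as \(x\to\infty\), so in view of the classical identity \(\int_0^\infty(1-\cos\mu)/\mu^2\,\cd\mu=\pi/2\), the claim \(h^S(x)/x\to 2/m^2\) follows once dominated convergence is justified; the case \(x\to-\infty\) is then immediate from \(h^S(-x)=h^S(x)\). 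The main obstacle is producing an \(x\)-uniform dominating function for \(\Phi_x\), since \(\lambda^2/\abs{\varPsi(\lambda)}\) need not be globally bounded. I would deal with this by splitting the \(\mu\)-integral at \(\mu=\Lambda_0 x\) for a fixed \(\Lambda_0>0\): on \(\cbra*{\mu\le\Lambda_0 x}\) the variable \(\lambda=\mu/x\) lies in \((0,\Lambda_0]\), where \(\lambda^2/\abs{\varPsi(\lambda)}\) is bounded by some constant \(C\) by continuity and Lemma~\ref{Lem:psi-conv}, enabling dominated convergence with majorant \(C(1-\cos\mu)/\mu^2\); on \(\cbra*{\mu>\Lambda_0 x}\), reversing the substitution bounds the contribution by \((4/(\pi x))\int_{\Lambda_0}^\infty \abs{1/\varPsi(\nu)}\,\cd\nu=O(1/x)\) via Lemma~\ref{Lem:inte-of-varPsi}(ii).
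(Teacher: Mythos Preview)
Your argument is correct and essentially matches the paper's. Parts~(i) and~(iii) are identical: dominated convergence via the bound $|1/(q+\varPsi)|\le 1/|\varPsi|$ and the majorant $((\lambda x)^2\wedge 2)/|\varPsi(\lambda)|$, then Riemann--Lebesgue for the decay of $h^D$.

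For part~(ii) the paper also splits the $\lambda$-integral at a threshold and discards the tail as $O(1/|x|)$ via Lemma~\ref{Lem:inte-of-varPsi}; on the head it runs a direct $\varepsilon$--$\delta$ limsup/liminf estimate, choosing $\delta$ so small that $\abs{\lambda^2/\varPsi(\lambda)-2/m^2}<\varepsilon$ on $(0,\delta)$ and then sandwiching $\Re((1-\cos\lambda x)/\varPsi)$ between $(2/m^2\pm\varepsilon)(1-\cos\lambda x)/\lambda^2$. Your change-of-variables-plus-DCT repackaging is equivalent, but note that your appeal to ``continuity and Lemma~\ref{Lem:psi-conv}'' for the boundedness of $\lambda^2/|\varPsi(\lambda)|$ on a \emph{fixed} compact $(0,\Lambda_0]$ tacitly uses that $\varPsi$ does not vanish there; this is true under~\ref{item:assumption} (since $\theta(\lambda_0)=0$ would force $\sigma=0$ and $\nu$ lattice, contradicting~\ref{item:cond-nu}) but is not literally a consequence of continuity alone. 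The paper's version sidesteps this by only invoking the limit behaviour at $0$, and you could do the same by simply taking $\Lambda_0$ small.
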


\begin{proof}
  \noindent (\ref{Lem-item:exist-h^S})
  By~\eqref{eq:def-h_q}, it holds that
  \begin{align}\label{eq:h_q^s}
    h_q(x)+h_q(-x)
    = \frac{2}{\pi}\int_0^\infty
    \Re\rbra*{\frac{1-\cos \lambda x}{q+\varPsi(\lambda)}}
    \,\cd\lambda.
  \end{align}
  Since \(\theta(\lambda) \ge 0\), we have
  \(\abs{q+\varPsi(\lambda)}\ge \abs{\varPsi(\lambda)}\).
  Hence it holds that
  \begin{align}\label{eq:hs-conv-DCT}
    \abs*{\Re\rbra*{\frac{1-\cos \lambda x}{q+\varPsi(\lambda)}}}
    \le \abs*{\frac{1-\cos \lambda x}{q+\varPsi(\lambda)}}
    \le \frac{1-\cos \lambda x}{\abs{\varPsi(\lambda)}}
    \le \frac{{(\lambda x)}^2 \wedge 2}{\abs{\varPsi(\lambda)}},
  \end{align}
  which is integrable in \(\lambda>0\) by Lemma~\ref{Lem:inte-of-varPsi}.
  Then we may apply the dominated convergence theorem
  to deduce that
  \begin{align}
    h_q(x)+h_q(-x) \longrightarrow
    \frac{2}{\pi}\int_0^\infty
    \Re\rbra*{\frac{1-\cos \lambda x}{\varPsi(\lambda)}}\,\cd\lambda,
    \quad \text{as \(q \to 0+\).}
  \end{align}

  \noindent  (\ref{Lem-item:h^S/x})
  We only consider the case \(x \to \infty\) since the case
  \(x \to -\infty\) can be proved in the same way.
  For any \(\delta>0\), we have
  \begin{align}
    \abs*{\frac{2}{\pi x}
    \int_\delta^\infty
    \Re\rbra*{\frac{1-\cos \lambda x}{\varPsi(\lambda)}}
    \, \cd \lambda}
    \le \frac{2}{\pi\abs{x}}
    \int_\delta^\infty \abs*{\frac{2}{\varPsi(\lambda)}}
    \, \cd \lambda
    \longrightarrow 0, \quad \text{as \(x\to\infty\).}\label{eq:h^S/x-big}
  \end{align}
  Fix \(\varepsilon > 0\).
  By Lemma~\ref{Lem:psi-conv},
  we can choose \(\delta>0\) such that
  \begin{align}
    \abs*{\frac{\lambda^2}{\varPsi(\lambda)}
      - \frac{2}{m^2}} < \varepsilon,
    \quad \text{for } \abs{\lambda} < \delta.
  \end{align}
  Then it holds that
  \begin{align}
    \frac{2}{\pi x}
    \int_0^\delta
    \Re\rbra*{\frac{1-\cos \lambda x}{\varPsi(\lambda)}}
    \, \cd \lambda
     & \le \rbra*{\frac{2}{m^2} + \varepsilon}
    \frac{2}{\pi x}\int_0^\delta
    \frac{1-\cos \lambda x}{\lambda^2}
    \, \cd \lambda                             \\
     & \to \rbra*{\frac{2}{m^2} + \varepsilon}
    \frac{2}{\pi}
    \int_0^\infty \frac{1-\cos \xi}{\xi^2}
    \, \cd \xi                                 \\
     & = \frac{2}{m^2} + \varepsilon,
    \quad \text{as \(x\to\infty\).}
  \end{align}
  Thus we obtain
  \begin{align}
    \limsup_{x\to\infty}
    \frac{2}{\pi x}
    \int_0^\delta
    \Re\rbra*{\frac{1-\cos \lambda x}{\varPsi(\lambda)}}
    \, \cd \lambda
    \le
    \frac{2}{m^2} + \varepsilon.\label{eq:h^S/x-upper}
  \end{align}
  In the same way, we can show that
  \begin{align}
    \liminf_{x\to\infty}
    \frac{2}{\pi x}
    \int_0^\delta
    \Re\rbra*{\frac{1-\cos \lambda x}{\varPsi(\lambda)}}
    \, \cd \lambda
    \ge
    \frac{2}{m^2} - \varepsilon.\label{eq:h^S/x-lower}
  \end{align}
  By~\eqref{eq:h^S/x-big},~\eqref{eq:h^S/x-upper}
  and~\eqref{eq:h^S/x-lower},
  the result follows.

  \noindent (\ref{Lem-item:h^D})
  By~\eqref{eq:def-h_q}, we have
  \begin{align}
    h_q(y+2x) - 2h_q(y+x) + h_q(y)
    =  \frac{2}{\pi}
    \int_0^\infty
    \Re\rbra*{ \ce^{\ci\lambda (y+x)}
      \frac{1-\cos \lambda x}{q+\varPsi(\lambda)}}
    \, \cd \lambda.
  \end{align}
  By the same way as~\eqref{eq:hs-conv-DCT},
  we may apply the dominated convergence theorem
  to obtain
  \begin{align}
    h^D(x, y)
     & = \lim_{q\to 0+} \cbra{h_q(y+2x) - 2h_q(y+x) + h_q(y)} \\
     & = \frac{2}{\pi} \int_0^\infty
    \Re\rbra*{\ce^{\ci \lambda(y+x)}
      \frac{1-\cos \lambda x}{\varPsi(\lambda)}} \, \cd \lambda.
  \end{align}
  Furthermore, by the Riemann--Lebesgue lemma, we obtain
  \(h^D(x, y) \longrightarrow 0\)
  as \(y \to \pm \infty\).
\end{proof}

\subsection{Proofs of Theorems~\ref{Thm:exist-h}
  and~\ref{Thm:property-h}}\label{Subsec:pf-h}
We separate the proof into the two cases:
\(m^2<\infty\) and \(m^2=\infty\).
We first show the existence and properties of \(h\) in the case \(m^2<\infty\).
In this case, we can use the dominated convergence theorem.
\begin{proof}[Proof of~(\ref{Thm-item:exist-h}) of Theorem~\ref{Thm:exist-h}
    in the case \(m^2<\infty\)]
  For each \(x, \lambda \in \bR\),
  we observe that
  \begin{align}
    \Re\rbra*{\frac{1-\ce^{\ci \lambda x}}{q+\varPsi(\lambda)}}
    = \frac{(q+\theta(\lambda)) (1-\cos\lambda x)
      + \omega(\lambda) \sin\lambda x}
    {\abs{q+\varPsi(\lambda)}^2}.
  \end{align}
  Hence it follows from \(\theta(\lambda) \ge 0\) that
  \begin{align}
     & \abs*{\Re\rbra*{
    \frac{1-\ce^{\ci \lambda x}}{q+\varPsi(\lambda)}}}    \\
     & \le
    \rbra*{\frac{1-\cos\lambda x}{\abs{\varPsi(\lambda)}}
      + \frac{\lambda^4}
      {\abs{\varPsi(\lambda)}^2}
      \abs*{\frac{\omega(\lambda)}{\lambda^3}}
      \abs*{\frac{\sin\lambda x}{\lambda}}}
    \wedge
    \abs*{\frac{1-\ce^{\ci \lambda x}}{\varPsi(\lambda)}} \\
     & \le
    \rbra*{\abs*{\frac{\lambda^2 x^2}{\varPsi(\lambda)}}
      + \abs*{\frac{\lambda^2}{\varPsi(\lambda)}}^2
      \abs*{\frac{\omega(\lambda)}{\lambda^3}}
      \abs{x}}
    \wedge
    \abs*{\frac{2}{\varPsi(\lambda)}}.
  \end{align}
  By Lemma~\ref{Lem:inte-of-varPsi},
  (\ref{Lem-item:psi-conv}) of Lemma~\ref{Lem:psi-conv}
  and Lemma~\ref{Lem:fin-omega-integral},
  the last quantity is integrable in \(\lambda>0\).
  Therefore, we may apply the dominated convergence theorem
  to conclude that
  \begin{align}
    h_q(x)
     & =
    \frac{1}{\pi}
    \int_0^\infty
    \Re\rbra*{\frac{1-\ce^{\ci \lambda x}}{q+\varPsi(\lambda)}}
    \,\cd \lambda \\
     & \to
    \frac{1}{\pi}
    \int_0^\infty
    \Re\rbra*{\frac{1-\ce^{\ci \lambda x}}{\varPsi(\lambda)}}
    \,\cd \lambda,
    \quad \text{as \(q\to 0+\).}
  \end{align}
  Hence the proof is complete.
\end{proof}

\begin{proof}[Proof of Theorem~\ref{Thm:property-h}
    in the case \(m^2<\infty\)]
  \noindent (\ref{Thm-item:h/x-infinity})
  We take \(\delta>0\) sufficiently small.
  By (\ref{Thm-item:exist-h}) of Theorem~\ref{Thm:exist-h},
  we have
  \begin{align}
    \hspace{-20pt}\frac{h(x)}{x}
     & =
    \frac{1}{\pi x}
    \int_0^\infty
    \Re\rbra*{\frac{1-\ce^{\ci\lambda x}}{\varPsi(\lambda)}}
    \, \cd \lambda \\
     & =
    \frac{1}{\pi x}\cbra*{
      \int_\delta^\infty
      \Re\rbra*{\frac{1-\ce^{\ci\lambda x}}{\varPsi(\lambda)}}
      \, \cd \lambda
      +
      \int_0^\delta
      \frac{\omega(\lambda)\sin\lambda x}
      {\abs{\varPsi(\lambda)}^2}
      \, \cd \lambda
      +
      \int_0^\delta
      \frac{\theta(\lambda)(1-\cos\lambda x)}
      {\abs{\varPsi(\lambda)}^2}
      \, \cd \lambda}.\label{eq:three-inte}
  \end{align}
  For the first integral in~\eqref{eq:three-inte}, we have
  \begin{align}
    \abs*{
      \frac{1}{\pi x}
      \int_\delta^\infty
      \Re\rbra*{\frac{1-\ce^{\ci\lambda x}}{\varPsi(\lambda)}}
      \, \cd \lambda
    }
    \le
    \frac{1}{\pi\abs{x}}
    \int_\delta^\infty
    \abs*{\frac{2}{\varPsi(\lambda)}}
    \, \cd \lambda
    \longrightarrow 0,
    \quad \text{as \(x \to \pm\infty\).}
  \end{align}
  For the second integral in~\eqref{eq:three-inte}, since we have
  \begin{align}\label{eq:sin-omega-DCT}
    \frac{\abs{\omega(\lambda)\sin\lambda x}}
    {\abs{\varPsi(\lambda)}^2\abs{x}}
    \le
    \abs*{\frac{\lambda^2}{\varPsi(\lambda)}}^2
    \abs*{\frac{\omega(\lambda)}{\lambda^3}},
  \end{align}
  which is integrable in \(\lambda\in (0, \delta)\)
  by (\ref{Lem-item:psi-conv}) of Lemma~\ref{Lem:psi-conv} and
  Lemma~\ref{Lem:fin-omega-integral},
  we can apply the dominated convergence theorem to obtain
  \begin{align}
    \frac{1}{\pi x}
    \int_0^\delta
    \frac{\omega(\lambda)\sin\lambda x}
    {\abs{\varPsi(\lambda)}^2}
    \, \cd \lambda
    \longrightarrow 0,
    \quad \text{as \( x \to \pm\infty\).}
  \end{align}
  For the third integral in~\eqref{eq:three-inte},
  we can apply the similar discussion as
  the proof of (\ref{Lem-item:h^S/x}) of Lemma~\ref{Lem:exist-hheq}.
  Therefore we obtain
  \begin{align}
    \lim_{x\to\pm\infty} \frac{h(x)}{\abs{x}} = \frac{1}{m^2}.
  \end{align}

  \noindent  (\ref{Thm-item:h-h-limit})
  Take \(\delta>0\) sufficiently small.
  Then we have
  \begin{align}
     & h(y+x) - h(y) \\
     & =
    \frac{1}{\pi}
    \int_0^\infty
    \Re\rbra*{\ce^{\ci\lambda y}
      \frac{1-\ce^{\ci\lambda x}}{\varPsi(\lambda)}}
    \, \cd \lambda   \\
     & =
    \frac{1}{\pi}
    \int_0^\infty
    \Re\rbra*{\ce^{\ci\lambda y}
      \frac{1-\cos\lambda x}{\varPsi(\lambda)}}
    \, \cd \lambda
    + \frac{1}{\pi}
    \rbra*{\int_0^\delta + \int_\delta^\infty}
    \Im\rbra*{\ce^{\ci\lambda y}\frac{\sin\lambda x}{\varPsi(\lambda)}}
    \, \cd \lambda.
  \end{align}
  By the Riemann--Lebesgue lemma, we obtain
  \begin{align}
    \lim_{y\to\pm\infty}\int_0^\infty
    \Re\rbra*{\ce^{\ci\lambda y}
      \frac{1-\cos\lambda x}{\varPsi(\lambda)}}
    \, \cd \lambda
    =\lim_{y\to\pm\infty}\int_\delta^\infty
    \Im\rbra*{\ce^{\ci\lambda y}\frac{\sin\lambda x}{\varPsi(\lambda)}}
    \, \cd \lambda
    =0.
  \end{align}
  On the other hand, we have
  \begin{align}
   \hspace{-25pt}\frac{1}{\pi}
    \int_0^\delta
    \Im\rbra*{\ce^{\ci\lambda y}\frac{\sin\lambda x}{\varPsi(\lambda)}}
    \, \cd \lambda
     & =
    \frac{1}{\pi}
    \int_0^\delta
    \frac{\theta(\lambda)\sin\lambda x \sin\lambda y}
    {\abs{\varPsi(\lambda)}^2}
    \, \cd \lambda
    - \frac{1}{\pi}
    \int_0^\delta
    \frac{\omega(\lambda)\sin\lambda x \cos\lambda y}
    {\abs{\varPsi(\lambda)}^2}
    \, \cd \lambda.\label{eq:h-property-finvar}
  \end{align}
  By~\eqref{eq:sin-omega-DCT},
  we may apply the Riemann--Lebesgue lemma
  to the second integral and we have
  \begin{align}
    \frac{1}{\pi}
    \int_0^\delta
    \frac{\omega(\lambda)\sin\lambda x \cos\lambda y}
    {\abs{\varPsi(\lambda)}^2}
    \, \cd \lambda
    \longrightarrow 0,
    \quad \text{as \(y\to\pm\infty\).}
  \end{align}
  For the first integral of~\eqref{eq:h-property-finvar}, we use
  Lemma~\ref{Lem:psi-conv} and
  Lemma~\ref{Lem:JT-DI} below
  to show that
  \begin{align}
    \frac{1}{\pi}
    \int_0^\delta
    \frac{\theta(\lambda)\sin\lambda x \sin\lambda y}
    {\abs{\varPsi(\lambda)}^2}
    \, \cd \lambda
     & =
    \frac{1}{\pi}
    \int_0^\delta
    \abs*{\frac{\lambda^2}{\varPsi(\lambda)}}^2
    \frac{\theta(\lambda)}{\lambda^2}
    \frac{\sin \lambda x}{\lambda}
    \frac{\sin \lambda y}{\lambda}
    \, \cd \lambda            \\
     & \to \pm \frac{x}{m^2},
    \quad \text{as \(y\to\pm\infty\).}
  \end{align}
  This ends the proof.
\end{proof}
The following lemma is an elementary calculus.
\begin{Lem}[Jordan's theorem for the Dirichlet integral]\label{Lem:JT-DI}
  Let \(\delta>0\) and let
  \(f\colon (0,\delta)\to \bR\) be continuous and be of bounded variation.
  Then it holds that
  \begin{align}
    \lim_{x\to \pm\infty}\frac{2}{\pi} \int_0^\delta
    f(\lambda)\frac{\sin \lambda x}{\lambda} \, \cd \lambda=\pm f(0+),
  \end{align}
  where \(f(0+)=\lim_{\lambda\to 0+}f(\lambda)\).
\end{Lem}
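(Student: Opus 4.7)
The plan is to follow the classical Dirichlet--Jordan argument. First, since $\sin(\lambda x)/\lambda$ is odd in $x$, it suffices to treat $x \to +\infty$, the case $x \to -\infty$ following by a sign flip. Second, the standard Dirichlet integral $\int_0^\infty \frac{\sin t}{t}\,\cd t = \frac{\pi}{2}$ yields, via the substitution $t = \lambda x$, that $\frac{2}{\pi}\int_0^\delta \frac{\sin\lambda x}{\lambda}\,\cd \lambda \to 1$ as $x \to +\infty$. Subtracting off $f(0+)$ therefore reduces the problem to showing that, for $g := f - f(0+)$, which is continuous and of bounded variation on $(0,\delta)$ with $g(0+) = 0$, we have $\int_0^\delta g(\lambda) \frac{\sin(\lambda x)}{\lambda}\,\cd \lambda \to 0$ as $x\to +\infty$.

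By Jordan's decomposition, I write $g = g_1 - g_2$ with $g_1,g_2$ monotone non-decreasing on $(0,\delta)$; the continuity of $g$ together with $g(0+) = 0$ lets me arrange $g_i(0+) = 0$, so without loss of generality $g$ itself is monotone non-decreasing with $g(0+) = 0$ (hence non-negative and bounded). For a small parameter $\eta \in (0,\delta)$ I split the integral as $\int_0^\delta = \int_0^\eta + \int_\eta^\delta$. On $[\eta,\delta]$ the function $\lambda \mapsto g(\lambda)/\lambda$ lies in $L^1$, so the Riemann--Lebesgue lemma gives $\int_\eta^\delta g(\lambda) \frac{\sin(\lambda x)}{\lambda}\,\cd \lambda \to 0$ as $x\to +\infty$. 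On $[0,\eta]$, because $g$ is monotone with $g(0+) = 0$, Bonnet's second mean value theorem yields some $c = c(x,\eta) \in [0,\eta]$ with
\begin{align}
\int_0^\eta g(\lambda) \frac{\sin(\lambda x)}{\lambda}\,\cd \lambda = g(\eta-) \int_c^\eta \frac{\sin(\lambda x)}{\lambda}\,\cd \lambda.
\end{align}
A change of variable $t = \lambda x$, combined with the boundedness of $t \mapsto \int_0^t \frac{\sin s}{s}\,\cd s$ on $[0,\infty)$, shows $\abs{\int_\alpha^\beta \frac{\sin(\lambda x)}{\lambda}\,\cd \lambda} \le C$ for some universal constant $C$, uniformly in $0 \le \alpha \le \beta$ and $x > 0$. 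Hence the near part is bounded by $C\,g(\eta-)$ for every $x > 0$.

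Given $\varepsilon > 0$, I first choose $\eta$ small enough that $C\,g(\eta-) < \varepsilon/2$ (possible since $g(0+) = 0$), and then pick $X$ large enough that the far part has absolute value less than $\varepsilon/2$ for $x \ge X$. This yields $\abs{\int_0^\delta g(\lambda) \frac{\sin(\lambda x)}{\lambda}\,\cd \lambda} < \varepsilon$, completing the proof. This is a classical result and no step is genuinely difficult; the only points requiring care are the correct one-sided application of Bonnet's second mean value theorem (for a monotone $g$ vanishing at $0+$) and the uniform boundedness of the truncated sine integral that follows from the convergence of $\int_0^\infty (\sin t)/t\,\cd t$.
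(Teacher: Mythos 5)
Your proof is correct, but it takes a genuinely different technical route from the paper. You follow the classical Dirichlet--Jordan argument: reduce to $x\to+\infty$ by oddness, subtract $f(0+)$ so the residual $g=f-f(0+)$ vanishes at $0+$, decompose $g$ into monotone pieces by Jordan's theorem, split the integral at a small $\eta$, kill the far part by Riemann--Lebesgue, and control the near part via Bonnet's second mean value theorem together with the uniform bound $\sup_{0\le\alpha\le\beta,\,x>0}\abs{\int_\alpha^\beta \sin(\lambda x)/\lambda\,\cd\lambda}<\infty$. The paper instead writes $f(\lambda)=f(0+)+\int_0^\lambda \cd f(\xi)$ (using that a continuous BV function is the distribution function of a finite signed Stieltjes measure $\cd f$), applies Fubini to exchange the order of integration, and obtains
\begin{align}
\frac{2}{\pi}\int_0^\delta f(\lambda)\frac{\sin\lambda x}{\lambda}\,\cd\lambda
=\frac{2}{\pi}\int_0^\delta\rbra*{\int_\xi^\delta\frac{\sin\lambda x}{\lambda}\,\cd\lambda}\cd f(\xi)
+\frac{2}{\pi}f(0+)\int_0^\delta\frac{\sin\lambda x}{\lambda}\,\cd\lambda;
\end{align}
the first term tends to $0$ by dominated convergence with respect to $\abs{\cd f}$ (the inner integral is uniformly bounded and tends to $0$ pointwise for $\xi>0$), and the second tends to $\pm f(0+)$. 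Both proofs rest on the same two ingredients (the Dirichlet integral $\int_0^\infty\sin t/t\,\cd t=\pi/2$ and uniform boundedness of its truncations), but the paper's Fubini/DCT approach avoids the Jordan decomposition, the $\eta$--$\varepsilon$ split, and Bonnet's theorem entirely, giving a shorter argument; your version is the textbook one and somewhat more elementary in the tools it invokes. Both are valid, and the uniform bound on the truncated sine integral you highlight is indeed the crux in either presentation.
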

\begin{proof}
  By integration by parts, we have
  \begin{align}
    \frac{2}{\pi} \int_0^\delta
    f(\lambda)\frac{\sin \lambda x}{\lambda} \, \cd \lambda
    &= \frac{2}{\pi}  \int_0^\delta \rbra*{\int_0^\lambda
    \cd f( \xi)}\frac{\sin \lambda x}{\lambda}\, \cd \lambda
    +\frac{2}{\pi} f(0+) \int_0^\delta \frac{\sin \lambda x}{\lambda}\, \cd \lambda\\
    &=\frac{2}{\pi}\int_0^\delta \rbra*{\int_\xi^\delta
    \frac{\sin \lambda x}{\lambda}\, \cd \lambda} \cd f(\xi)
    +\frac{2}{\pi} f(0+) \int_0^\delta \frac{\sin \lambda x}{\lambda}\, \cd \lambda\\
    &\to 0 \pm f(0+), \qquad\text{as \(x\to\pm \infty\).}
  \end{align}
  Thus we obtain the desired result.
\end{proof}

Then let us prove the existence of \(h\)
in the case \(X\) is recurrent
and \(m^2=\infty\). Its proof
is quite different from that in the case \(m^2<\infty\).

\begin{proof}[Proof of~(\ref{Thm-item:exist-h}) of Theorem~\ref{Thm:exist-h}
    in the case \(m^2=\infty\)]
  Since \(h_q(0) = 0\),
  we have the limit \(h(0)=\lim_{q\to 0+}h_q(0)=0\).

  Fix \(a \ne 0\) and set
  \begin{align}
    \overline{h}(a) = \limsup_{q\to 0+}h_q(a), \quad
    \underline{h}(a) = \liminf_{q\to 0+}h_q(a).
  \end{align}
  We also define
  \(\varDelta = \overline{h}(a) - \underline{h}(a) \ge 0\) and
  \(A = \{2^j a\colon j=0,1,2,3,\ldots\}\).

  It follows from
  \(h_q(x) \le h_q(x)+h_q(-x)\)
  and~\eqref{eq:h_q^s}--\eqref{eq:hs-conv-DCT}
  that \({\{h_q(x)\}}_{q>0}\) is bounded for each \(x\in \bR\).
  Hence, by the diagonal argument,
  we can take two
  sequences \(\{q_n\}, \{q^\prime_n\}\),
  which satisfies the following three conditions:
  \begin{itemize}
    \item \( q_n, q^\prime_n \to 0+\) as \(n\to\infty\);
    \item \(\lim_{n\to\infty}h_{q_n}(x)\) and
          \(\lim_{n\to\infty}h_{q^\prime_n}(x)\) exist and are finite for
          each \(x \in A\);
    \item \( \overline{h}(a) = \lim_{n\to\infty} h_{q_n}(a)\) and
          \(\underline{h}(a) = \lim_{n\to\infty} h_{q^\prime_n}(a)\).
  \end{itemize}
  Then we define, for \(x \in A\),
  \begin{align}
    \overline{h}(x) = \lim_{n\to\infty}h_{q_n}(x),
    \quad
    \underline{h}(x) = \lim_{n\to\infty} h_{q^\prime_n}(x).
  \end{align}
  By (\ref{Lem-item:h^D}) of Lemma~\ref{Lem:exist-hheq}, we have
  \begin{align}
    h^D(x,0)=\overline{h}(2x) - 2 \overline{h}(x)
    =\underline{h}(2x) - 2 \underline{h}(x),
    \quad x \in A.
  \end{align}
  This implies that
  \(\overline{h}(2x) - \underline{h}(2x)
  =2(\overline{h}(x) - \underline{h}(x))\).
  Hence it holds that
  \(\overline{h}(2^j a) - \underline{h}(2^j a)
  = 2^j \varDelta\), i.e.,
  \begin{align}\label{eq:diff-overunder-h}
    \frac{\overline{h}(2^j a)}{2^j a} -
    \frac{\underline{h}(2^j a)}{2^j a}
    = \frac{\varDelta}{a}, \quad j = 0,1,2,3,\ldots.
  \end{align}
  It follows from (\ref{Lem-item:h^S/x}) of Lemma~\ref{Lem:exist-hheq}
  that
  \begin{align}
    \max\cbra*{\frac{\overline{h}(2^j a)}{2^j a},
      \frac{\underline{h}(2^j a)}{2^j a}}
    \le
    \frac{h^S(2^j a)}{2^j a} \longrightarrow 0,
    \quad \text{as \(j\to\infty\).}
  \end{align}
  Thus, by letting \(j\to\infty\) in~\eqref{eq:diff-overunder-h},
  we obtain \(\varDelta = 0\).
  Therefore, we conclude that \(h(a)=\lim_{q\to 0+}h_q(a)\) exists.
\end{proof}

Next, we prove (\ref{Thm-item:unif-conv-h}) and (\ref{Thm-item:subadditive-h})
of Theorem~\ref{Thm:exist-h}
in both cases
\(m^2=\infty\) and \(m^2<\infty\).
\begin{proof}[Proof of (\ref{Thm-item:unif-conv-h}) and (\ref{Thm-item:subadditive-h})
    of Theorem~\ref{Thm:exist-h}]
  By the Markov property, we have, for \(x,y\in\bR\),
  \begin{align}
    \bP_{x+y}\sbra{\ce^{-qT_0}} =\bP_0\sbra{\ce^{-qT_{-x-y}}}
    \ge \bP_0\sbra{\ce^{-qT_{-x}} \bP_{-x}\sbra{\ce^{-qT_{-x-y}}}}
    = \bP_x\sbra{\ce^{-qT_{0}}} \bP_y\sbra{\ce^{-qT_0}}.
  \end{align}
  Since \(h_q(x) =r_q(0)\rbra{1-\bP_x\sbra{e^{-qT_0}}}\)
  and \(\rbra{1-\bP_x\sbra{e^{-qT_0}}}\rbra{1-\bP_y\sbra{e^{-qT_0}}}\ge 0\),
  it holds that
  \begin{align}
    h_q(x+y) \le h_q(x) +h_q(y).
  \end{align}
  Hence \(h_q\) is subadditive.
  (This proof can also be found in~\cite[Lemma 3.3]{MR3689384}.)
  Since \(h_q\) is non-negative and subadditive,
  and by~\eqref{eq:h_q^s} and~\eqref{eq:hs-conv-DCT},
  it holds that
  \begin{align}
    \abs{h_q(x+\delta) - h_q(x)}
    \le h_q(\delta) + h_q(-\delta)
    \le \int_0^\infty
    \abs*{\frac{\rbra{\lambda \delta}^2 \wedge 2}{\varPsi(\lambda)}}
    \, \cd \lambda.
  \end{align}
  Hence \({\{h_q\}}_{q>0}\) is equi-continuous.
  Equi-continuity and pointwise-convergence imply
  the uniform convergence on compact subset of \(\bR\).
  The subadditivity of \(h\) follows directly from that of \(h_q\).
\end{proof}
Finally, we show the properties of \(h\) in Theorem~\ref{Thm:property-h}
in the case \(m^2=\infty\).
\begin{proof}[Proof of Theorem~\ref{Thm:property-h}
    in the case \(m^2=\infty\)]
  \noindent (\ref{Thm-item:h/x-infinity})
  This is directly from
  (\ref{Lem-item:h^S/x}) of Lemma~\ref{Lem:exist-hheq}.

  \noindent (\ref{Thm-item:h-h-limit})
  Since \(h\) is subadditive, we have
  \begin{align}\label{eq:h-diff-sum}
    \sum_{k=1}^n \cbra{h(kx+y)-h((k-1)x+y)}
    = h(nx+y) - h(y) \le h(nx).
  \end{align}
  By (\ref{Lem-item:h^D}) of Lemma~\ref{Lem:exist-hheq},
  it holds that
  \begin{align}
    \cbra{h(2x+y) - h(x+y)} - \cbra{h(x+y) - h(y)}
    \longrightarrow 0,
    \quad \text{as \(y\to\pm\infty\).}
  \end{align}
  Thus we have
  \begin{align}\label{eq:h-diff-limsup}
    \limsup_{y\to\pm\infty}
    \sum_{k=1}^n \cbra{h(kx+y)-h((k-1)x+y)}
    = n \limsup_{y\to\pm\infty} \cbra{h(x+y) - h(y)}.
  \end{align}
  Combining~\eqref{eq:h-diff-sum} and~\eqref{eq:h-diff-limsup}, we obtain
  \begin{align}
    \limsup_{y\to\pm\infty} \cbra{h(x+y) - h(y)}
    \le \frac{h(nx)}{n}.
  \end{align}
  Since we have \(\lim_{n\to\infty}\frac{h(nx)}{n}=0\)
  by~(\ref{Lem-item:h^D}) of Lemma~\ref{Lem:exist-hheq}, we have
  \begin{align}
    \limsup_{y\to\pm\infty} \cbra{h(x+y) - h(y)} \le 0.
  \end{align}
  Replacing \(x\) with \(-x\), we also have
  \begin{align}
    \liminf_{y\to\pm\infty} \cbra{h(y) - h(y-x)} \ge 0.
  \end{align}
  Therefore we obtain
  \(
  \lim_{y\to\pm\infty} \cbra{h(x+y) - h(y)} = 0.
  \)
\end{proof}

\subsection{The function \(h^B\)}
Let us compute \(\bP\sbra{L_{T_a}}\) and \(\bP_x(T_a<T_b)\).
\begin{Lem}\label{Lem:h^B-hitting-h}
  \begin{enumerate}
    \item For \(a \in \bR\),
          \begin{align}
            h^B_q(a) \coloneqq
            \bP\sbra*{\int_0^{T_a} \ce^{-qt}\, \cd L_t}
            = h_q(a)+h_q(-a)-\frac{h_q(a)h_q(-a)}{r_q(0)}.
            \label{eq:LT-hitting-expect-q}
          \end{align}\label{Lem-item:LT-hitting-expect}
          Consequently, it holds that
          \begin{align}
            h^B(a) \coloneqq \lim_{q\to 0+} h_q^B(a)
            =\bP\sbra{L_{T_a}}= h(a) + h(-a).
            \label{eq:h^B}
          \end{align}\label{Lem-item:hq-two-hitting-time}
    \item For \(x, a, b \in \bR\), \(a\ne b\) and
          for \(q>0\), it holds that
          \begin{align}
            \begin{aligned}
               & \bP_x\sbra{\ce^{-qT_a}; T_a < T_b}
              \\
               & =
              \frac{h_q(b-a)+h_q(x-b)-h_q(x-a)-h_q(x-b)h_q(b-a)/r_q(0)}{h^B_q(a-b)}.
            \end{aligned}
            \label{eq:qT_a;T_a<T_b}
          \end{align}
          Consequently, it holds that
          \begin{align}
            \bP_x(T_a < T_b)
            = \frac{h(b-a)+h(x-b)-h(x-a)}{h^B(a-b)}.
            \label{eq:two-hittig-time-prob}
          \end{align}\label{Lem-item:two-hittig-time-prob}
  \end{enumerate}
\end{Lem}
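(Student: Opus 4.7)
My plan is to derive each identity by splitting an integral or a probability via the strong Markov property, and then to translate the resulting $r_q$-formula into $h_q$-form using $r_q(\pm a)=r_q(0)-h_q(\mp a)$ from~\eqref{eq:def-h_q}.

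For part~(\ref{Lem-item:LT-hitting-expect}) the plan is to split the integral at $T_a$:
\begin{align}
r_q(0)=\bP\sbra*{\int_0^\infty \ce^{-qt}\,\cd L_t}
=h_q^B(a)+\bP\sbra*{\int_{T_a}^\infty \ce^{-qt}\,\cd L_t},
\end{align}
where the first equality is~\eqref{eq:regularity-of-L} at $x=0$. The strong Markov property at $T_a$, combined with a second application of~\eqref{eq:regularity-of-L} now under $\bP_a$, identifies the last expectation as $\bP\sbra{\ce^{-qT_a}}\,r_q(-a)$. Using $\bP\sbra{\ce^{-qT_a}}=r_q(a)/r_q(0)$ (which comes from~\eqref{eq:exp-hit-time} after translating the starting point to $-a$ by spatial homogeneity) and substituting $r_q(\pm a)=r_q(0)-h_q(\mp a)$, routine algebra yields~\eqref{eq:LT-hitting-expect-q}. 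The consequence~\eqref{eq:h^B} then follows by letting $q\to 0+$: monotone convergence gives $h_q^B(a)\to\bP\sbra{L_{T_a}}$, Theorem~\ref{Thm:exist-h} gives $h_q(\pm a)\to h(\pm a)$, and recurrence together with~\eqref{eq:kappa} forces $1/r_q(0)\to\kappa=0$, so the cross term drops out.

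For part~(\ref{Lem-item:two-hittig-time-prob}) I would apply the strong Markov property at $T_a\wedge T_b$, which is $\bP_x$-a.s.\ finite by recurrence and (since $a\neq b$ and $X$ is quasi-left-continuous) a.s.\ separates the events $\{T_a<T_b\}$ and $\{T_b<T_a\}$. This yields the linear system
\begin{align}
\bP_x\sbra{\ce^{-qT_a}} &= \bP_x\sbra{\ce^{-qT_a};T_a<T_b}+\bP_x\sbra{\ce^{-qT_b};T_b<T_a}\,\bP_b\sbra{\ce^{-qT_a}},\\
\bP_x\sbra{\ce^{-qT_b}} &= \bP_x\sbra{\ce^{-qT_b};T_b<T_a}+\bP_x\sbra{\ce^{-qT_a};T_a<T_b}\,\bP_a\sbra{\ce^{-qT_b}}
\end{align}
in the two unknowns $\bP_x\sbra{\ce^{-qT_a};T_a<T_b}$ and $\bP_x\sbra{\ce^{-qT_b};T_b<T_a}$; solving gives
\begin{align}
\bP_x\sbra{\ce^{-qT_a};T_a<T_b}=\frac{\bP_x\sbra{\ce^{-qT_a}}-\bP_x\sbra{\ce^{-qT_b}}\bP_b\sbra{\ce^{-qT_a}}}{1-\bP_a\sbra{\ce^{-qT_b}}\bP_b\sbra{\ce^{-qT_a}}}.
\end{align}
Each factor has the form $\bP_y\sbra{\ce^{-qT_z}}=1-h_q(y-z)/r_q(0)$; expanding, a factor $r_q(0)^{-1}$ appears in both numerator and denominator. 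The denominator becomes exactly $h_q^B(a-b)/r_q(0)$ by part~(\ref{Lem-item:LT-hitting-expect}), while the numerator becomes $r_q(0)^{-1}$ times the bracketed expression of~\eqref{eq:qT_a;T_a<T_b}; the factors $r_q(0)^{-1}$ cancel. Finally,~\eqref{eq:two-hittig-time-prob} follows by $q\to 0+$: monotone convergence on the left, $1/r_q(0)\to 0$ killing the cross term in the numerator, and $h_q^B(a-b)\to h(a-b)+h(b-a)$ in the denominator; this limit is strictly positive because $X_0=0$ together with the regularity of $0$ for itself forces $L_{T_{a-b}}>0$ $\bP$-a.s.\ whenever $a\neq b$.

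The only step requiring genuine attention is the algebraic identification of $h_q^B(a-b)/r_q(0)$ as the denominator after the $r_q\leftrightarrow h_q$ substitution—equivalently, recognising that the determinant $1-\bP_a\sbra{\ce^{-qT_b}}\bP_b\sbra{\ce^{-qT_a}}$ is precisely the formula of part~(\ref{Lem-item:LT-hitting-expect}) at $a-b$ divided by $r_q(0)$. Once this bookkeeping is organised, every remaining step is a routine strong-Markov decomposition, an elementary algebraic rearrangement, or a monotone/dominated convergence passage as $q\to 0+$.
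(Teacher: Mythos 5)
Your proposal is correct and follows essentially the same route as the paper: part~(\ref{Lem-item:two-hittig-time-prob}) is proved in the paper by exactly the same strong-Markov linear system in the two unknowns $\bP_x\sbra{\ce^{-qT_a};T_a<T_b}$, $\bP_x\sbra{\ce^{-qT_b};T_b<T_a}$, followed by the same $r_q\leftrightarrow h_q$ substitution and passage $q\to 0+$ using $1/r_q(0)\to\kappa=0$. The only difference is in part~(\ref{Lem-item:LT-hitting-expect}): the paper simply cites Bertoin~\cite[Lemma V.11]{MR1406564} for the identity~\eqref{eq:LT-hitting-expect-q}, whereas you supply a short self-contained derivation by splitting $\int_0^\infty$ at $T_a$ and applying the strong Markov property together with~\eqref{eq:regularity-of-L} and~\eqref{eq:exp-hit-time}. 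That derivation is correct (it is, in fact, essentially Bertoin's argument), and your remark that $h^B(a-b)>0$ because $0$ is regular for itself and $T_{a-b}>0$ $\bP$-a.s.\ is a valid justification that the paper leaves implicit.
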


\begin{proof}
  \noindent (\ref{Lem-item:hq-two-hitting-time})
  We omit the proof of~\eqref{eq:LT-hitting-expect-q},
  which can be found
  in~\cite[Lemma V.11]{MR1406564}.
  Letting \(q\to 0+\) in~\eqref{eq:LT-hitting-expect-q},
  and using~\eqref{eq:kappa},
  we obtain~\eqref{eq:h^B}.

  \noindent (\ref{Lem-item:two-hittig-time-prob})
  By the strong Markov property, it holds that,
  for \(x,a,b \in \bR,\,a\ne b, \, q>0\),
  \begin{gather}
    \bP_x\sbra{\ce^{-qT_a}}
    = \bP_x\sbra{\ce^{-qT_a};T_a < T_b}
    + \bP_x\sbra{\ce^{-qT_b}; T_b < T_a}\bP_b\sbra{\ce^{-qT_a}}, \\
    \bP_x\sbra{\ce^{-qT_b}}
    = \bP_x\sbra{\ce^{-qT_b};T_b < T_a}
    + \bP_x\sbra{\ce^{-qT_a}; T_a < T_b}\bP_a\sbra{\ce^{-qT_b}}.
  \end{gather}
  Combining the above two equalities, we have
  \begin{align}
    \bP_x\sbra{\ce^{-qT_a}; T_a<T_b}
    = \frac{\bP_x\sbra{\ce^{-qT_a}}-\bP_x\sbra{\ce^{-qT_b}}
      \bP_b\sbra{\ce^{-qT_a}}}
    {1-\bP_a\sbra{\ce^{-qT_b}}\bP_b\sbra{\ce^{-qT_a}}}.
  \end{align}
  By~\eqref{eq:exp-hit-time}, this implies that
  \begin{align}
     & \bP_x\sbra{\ce^{-qT_a}; T_a<T_b}
    \\
     & = \frac{r_q(a-x) - r_q(b-x)r_q(a-b)/r_q(0)}
    {r_q(0) - r_q(b-a)r_q(a-b)/r_q(0)}
    \\
     & = \frac{h_q(b-a)+h_q(x-b)-h_q(x-a)-h_q(x-b)h_q(b-a)/r_q(0)}{h^B_q(a-b)}.
  \end{align}
  Hence we obtain~\eqref{eq:qT_a;T_a<T_b}.
  Letting \(q\to 0+\) in~\eqref{eq:qT_a;T_a<T_b},
  we obtain~\eqref{eq:two-hittig-time-prob}.
\end{proof}
\begin{Rem}
  The formulae~\eqref{eq:qT_a;T_a<T_b} and~\eqref{eq:two-hittig-time-prob} are
  also discussed in Theorem 6.5
  of Getoor~\cite{MR185663}.
  See also Proposition 5.3, Proposition 5.4
  and Remark 5.5 of Yano--Yano--Yor~\cite{MR2599211}.
\end{Rem}
The next theorem can be proved in the same way as
Pant\'{\i}~\cite[Lemma 3.10]{MR3689384}.

\begin{Lem}[{\cite[Lemma 3.10]{MR3689384}}]\label{Thm:h^B-infty}
  It holds that \(\lim_{x\to\infty} h^B(x) = \infty\).
\end{Lem}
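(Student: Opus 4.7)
The plan is to use the probabilistic identity $h^B(x) = \bP\sbra{L_{T_x}}$ established in (\ref{Lem-item:LT-hitting-expect}) of Lemma~\ref{Lem:h^B-hitting-h}, reduce the problem to showing $L_{T_x} \to \infty$ almost surely as $x \to \infty$, and then conclude by Fatou's lemma. An analytic approach via the integral representation of $h^S$ from (\ref{Lem-item:exist-h^S}) of Lemma~\ref{Lem:exist-hheq} is also available, but the probabilistic route is cleaner because it works uniformly in the two cases $m^2<\infty$ and $m^2=\infty$.

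The argument rests on two elementary observations. First, since $X$ is c\`{a}dl\`{a}g, the running supremum $M_t := \sup_{0 \le s \le t} X_s$ is $\bP$-a.s.\ finite for every $t \ge 0$; hence $T_x > t$ as soon as $x > M_t$, and letting $x \to \infty$ we obtain $T_x \to \infty$ $\bP$-a.s. Second, recurrence of $X$ is equivalent to $\kappa = 0$, i.e., $r_q(0) \to \infty$ as $q \to 0+$, and from the Laplace transform $\bP\sbra{\ce^{-q \eta_l}} = \ce^{-l/r_q(0)}$ recalled in Subsection~\ref{Subsec:LT-em} this forces $\eta_l < \infty$ $\bP$-a.s.\ for every $l \ge 0$; equivalently, the subordinator $\eta$ is not killed, so $L_\infty = \infty$ $\bP$-a.s.

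Combining these, fix any sequence $x_n \to \infty$. For every finite $t > 0$ we have $T_{x_n} \ge t$ for all sufficiently large $n$, so $L_{T_{x_n}} \ge L_t$ eventually, and therefore $\liminf_n L_{T_{x_n}} \ge L_t$ $\bP$-a.s. Letting $t \to \infty$ gives $\liminf_n L_{T_{x_n}} = \infty$ $\bP$-a.s. Since $L_{T_{x_n}} \ge 0$, Fatou's lemma yields
\begin{align}
\liminf_{n\to\infty} h^B(x_n)
= \liminf_{n\to\infty} \bP\sbra{L_{T_{x_n}}}
\ge \bP\sbra*{\liminf_{n\to\infty} L_{T_{x_n}}}
= \infty,
\end{align}
and the arbitrariness of the sequence proves $\lim_{x\to\infty}h^B(x) = \infty$. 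The only point needing care is that $x \mapsto T_x$ is not monotone when $X$ has jumps, so one cannot directly invoke monotone convergence; this is precisely what the bound $T_x > t$ for $x > M_t$ circumvents, and I expect this to be the only subtlety in the proof.
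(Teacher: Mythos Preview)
Your proof is correct and rests on the same two probabilistic ingredients as the paper's: that $T_x\to\infty$ $\bP$-a.s.\ as $x\to\infty$, and that recurrence forces $r_q(0)\to\infty$ (equivalently $L_\infty=\infty$ a.s.). The packaging differs slightly. The paper introduces an independent exponential time $\bm{e}_q$ and bounds
\[
h^B(x)=\bP\sbra{L_{T_x}}\ge \bP\sbra{L_{\bm{e}_q};\,T_x>\bm{e}_q}\xrightarrow[x\to\infty]{}\bP\sbra{L_{\bm{e}_q}}=r_q(0),
\]
then lets $q\to 0+$. You instead push $T_x\to\infty$ all the way to $L_{T_x}\to\infty$ a.s.\ and apply Fatou directly. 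Your route is marginally more elementary in that it avoids the auxiliary exponential clock; the paper's route has the small advantage of yielding the explicit quantitative lower bound $\liminf_{x\to\infty}h^B(x)\ge r_q(0)$ for every $q>0$, which is what is reused verbatim in the transient case (Lemma~\ref{Thm:h^B-infty-trans}) to get the limit $\kappa^{-1}$. Both arguments handle the non-monotonicity of $x\mapsto T_x$ the same way, via $\{T_x>t\}\supset\{M_t<x\}$.
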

\begin{proof}
  For completeness of this paper, we give the proof.
  Let \(\bm{e}\) be an independent exponential time of mean \(1\)
  and set \(\bm{e}_q \coloneqq \bm{e}/q\) for \(q>0\).
  Then we have
  \begin{align}\label{eq:h_b-eq}
    h^B(x) =\bP\sbra{L_{T_x}}
    = \bP\sbra{L_{T_x}; T_x\le \bm{e}_q} + \bP\sbra{L_{T_x}; T_x > \bm{e}_q}
    \ge \bP\sbra{L_{\bm{e}_q}; T_x>\bm{e}_q}.
  \end{align}
  Letting \(x\to\infty\), we have
  \begin{align}\label{eq:h_b-lower}
    \liminf_{x\to\infty}h^B(x) \ge \bP\sbra{L_{\bm{e}_q}} = r_q(0),
    \quad \text{for all \(q>0\).}
  \end{align}
  Since \(X\) is recurrent, i.e.,
  \(\kappa=0\) in~\eqref{eq:kappa},
  we let \(q\to 0+\) to obtain
  \( \liminf_{x\to\infty}h^B(x) \ge \infty\).
  Hence we obtain the desired result.
\end{proof}

The following theorem, which will be used in Section~\ref{Sec:inv-time},
is a generalization of the result
in the symmetric case
by Yano~\cite[Theorem 6.1]{MR2603019}.
\begin{Thm}\label{Thm:n-two-hitting-time}
  For \(a \in \bR \setminus \{0\}\), it holds that
  \begin{align}\label{eq:n-T_a-T_0-inf}
    n(T_a < T_0) = \frac{1}{h^B(a)}.
  \end{align}
\end{Thm}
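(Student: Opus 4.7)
The plan is to apply the Compensation formula (Lemma~\ref{Lem:compensation-formula}) to a carefully chosen predictable functional so that the left-hand side reduces to the constant $1$ and the right-hand side factors as $n(T_a<T_0)\cdot h^B(a)$. Specifically, I would take
\[
F(t,\omega,e) = 1_{\{t<T_a(\omega)\}}\,1_{\{T_a(e)<T_0(e)\}},
\]
where $T_a(\omega)$ is the hitting time of $a$ along the canonical path $\omega$, and $T_a(e),T_0(e)$ are respectively the hitting time of $a$ and the lifetime of the excursion $e$. For each fixed $e$, the process $t\mapsto 1_{\{t<T_a(\omega)\}}$ is left-continuous (since $T_a>0$ a.s. because $a\ne 0$) and $(\cF_t)$-adapted, hence $(\cF_t)$-predictable, so Lemma~\ref{Lem:compensation-formula} applies.

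For the left-hand side, since $a\ne 0$ the instant $T_a$ must lie strictly inside a unique excursion interval $[\eta_{l^*-},\eta_{l^*})$; that excursion $e_{l^*}$ satisfies $T_a(e_{l^*})<T_0(e_{l^*})$ and $\eta_{l^*-}<T_a$, while every earlier excursion fails the second indicator and every later one has $\eta_{l-}\ge \eta_{l^*}>T_a$. Hence
\[
\sum_{l\in D} F(\eta_{l-},X,e_l) = 1_{\{T_a<\infty\}},
\]
and by the point recurrence~\eqref{eq:pt-recurrent}, this equals $1$ almost surely, so the left-hand side of the compensation formula is $1$.

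For the right-hand side, independence of $\widetilde{X}$ from $X$ factors the double integral:
\[
\bP\otimes\widetilde{n}\left[\int_0^\infty 1_{\{t<T_a\}}\,\cd L_t\cdot 1_{\{T_a(\widetilde{X})<T_0(\widetilde{X})\}}\right] = n(T_a<T_0)\cdot \bP[L_{T_a-}].
\]
Since $L$ is continuous (see Subsection~\ref{Subsec:LT-em}), $L_{T_a-}=L_{T_a}$ and $\bP[L_{T_a}]=h^B(a)$ by the definition in~\eqref{eq:h^B}. Equating both sides yields $n(T_a<T_0)\cdot h^B(a)=1$, which is the desired identity. The only delicate step is the combinatorial identification of the sum with $1_{\{T_a<\infty\}}$: it rests on the fact that $X$ must be strictly inside an excursion from $0$ at the instant it first hits $a\ne 0$, so that $T_a$ cannot coincide with an excursion endpoint and exactly one excursion is responsible for it.
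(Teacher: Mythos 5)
Your proof is correct, and it takes a genuinely different route from the paper's. The paper observes that $L_{T_a}$ is the first local-time level at which the excursion point process lands in the set $\{T_a<T_0\}$, hence (as the entry time of a Poisson point process into a set of finite $n$-mass) is exponentially distributed with rate $n(T_a<T_0)$; combining $\bP[L_{T_a}]=h^B(a)$ from Lemma~\ref{Lem:h^B-hitting-h} with the mean of this exponential gives the identity. Your argument instead applies the compensation formula (Lemma~\ref{Lem:compensation-formula}), which is only a first-moment consequence of the same Poisson structure: you count the unique excursion straddling $T_a$ on one side and factor $n(T_a<T_0)\cdot\bP[L_{T_a}]$ on the other. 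The trade-off is that the paper's route yields the full exponential law of $L_{T_a}$ (which it reuses in Lemma~\ref{Lem:expect-L_T_a}), while yours extracts exactly the mean identity needed here with nothing wasted, and stays entirely inside tools already stated in the paper rather than citing an external hitting-time-of-PPP result.

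One small slip worth fixing: you assert that $t\mapsto 1_{\{t<T_a\}}$ is left-continuous, but it is in fact right-continuous (it jumps from $1$ to $0$ at $t=T_a$, where the left limit is $1$ but the value is $0$), so it is optional rather than predictable. The clean fix is to use $F(t,\omega,e)=1_{\{t\le T_a(\omega)\}}1_{\{T_a(e)<T_0(e)\}}$ instead: $1_{\{t\le T_a\}}$ is genuinely left-continuous and adapted, hence predictable, so Lemma~\ref{Lem:compensation-formula} applies directly. This change is harmless on both sides of the formula, since $X_{\eta_{l-}}=0\ne a$ rules out $\eta_{l-}=T_a$ in the sum, and since $L$ is continuous the Stieltjes integrals over $[0,T_a)$ and $[0,T_a]$ against $\mathrm{d}L$ coincide. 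With that substitution the rest of your argument stands exactly as written.
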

\begin{proof}
  For \(l > 0\), it holds that
  \begin{align}
    \bP(L_{T_a}>l) = \bP(T_a > \eta_l) =
    \bP(\sigma_{\cbra{T_a<T_0}}> l),
  \end{align}
  where \(\sigma_A = \inf\cbra{l \colon e_l \in A}\)
  for \(A \subset \cD\).
  Since \(\sigma_A\) is the hitting time
  of the set \(A\)
  for the
  Poisson point process \(((l, e_l), l\ge 0)\),
  we have
  \begin{align}\label{eq:L_T_a-exp-distrib}
    \bP(L_{T_a}>l)
    = \ce^{-l{n(T_a<T_0)}}.
  \end{align}
  For more details, see, e.g.,~\cite[Lemma 6.17]{MR3155252}.
  In particular, \(L_{T_a}\) is exponentially distributed.
  On the other hand, we know \(h^B(a) = \bP\sbra{L_{T_a}}\)
  by Lemma~\ref{Lem:h^B-hitting-h}.
  Hence we obtain~\eqref{eq:n-T_a-T_0-inf}.
\end{proof}

\subsection{Examples of the renormalized zero resolvent}
\begin{Eg}[Brownian motion]\label{Eg:h-brownian}
  Assume \(X\) is a standard Brownian motion.
  Since \(m^2=\sigma^2=1\), it holds that
  \begin{align}
    h^{(\gamma)}(x) = \abs{x}+\gamma x=\begin{cases}
      (1+\gamma)x & x\ge 0, \\
      (1-\gamma)x & x\le 0,
    \end{cases}
    \quad\text{for \(-1\le \gamma\le 1\).}
  \end{align}
\end{Eg}
\begin{Eg}[Strictly stable process]
  Assume that \(X\) is a strictly stable process of index
  \(\alpha\in (1,2)\) with the L\'{e}vy measure
  \begin{align}
    \nu(\cd x) =
    \begin{cases}
      c_{+} \abs{x}^{-\alpha-1}\, \cd x & \text{on \((0, \infty)\),}  \\
      c_{-} \abs{x}^{-\alpha-1}\, \cd x & \text{on \((-\infty, 0)\),}
    \end{cases}
  \end{align}
  where \(c_{+},c_{-} \ge 0\) and \(c_{+}+c_{-} > 0\).
  Then its characteristic exponent is given by
  \begin{align}
    \varPsi(\lambda)
    = c\abs{\lambda}^{\alpha}
    \rbra*{1-\ci \beta \sgn(\lambda)\tan\frac{\alpha\pi}{2}},
  \end{align}
  where \(c\) and \(\beta\) are constants defined by
  \begin{align}
    c = \frac{(c_{+}+c_{-})\pi}{2\alpha \Gamma(\alpha)\sin(\pi\alpha/2)},
    \quad
    \beta = \frac{c_{+} - c_{-}}{c_{+}+c_{-}}.
  \end{align}
  In this case, we have \(m^2=\infty\) and the function
  \(h\) can be represented as
  \begin{align}\label{eq:stable-h}
    h(x) = \frac{1}{K(\alpha)} (1-\beta\sgn (x))\abs{x}^{\alpha-1},
  \end{align}
  where
  \begin{align}
    K(\alpha) = -2c\Gamma(\alpha)\cos\frac{\pi\alpha}{2}
    \rbra*{1+\beta^2\tan^2 \frac{\pi\alpha}{2}}.
  \end{align}
  For more details, see~\cite[Section 5]{MR3072331}.
\end{Eg}

\section{Local time penalization with exponential clock}\label{Sec:exp-clock}
We now start to deal with the penalization
result with exponential clock.
Let \(\bm{e}\) be an independent exponential time of mean
\(1\) and for \(q>0\), we write \(\bm{e}_q \coloneqq \bm{e}/q\), which
has an exponential distribution of mean \(1/q\).
We compute \(\bP_x\sbra{f(L_{\bm{e}_q})}\),
\(\bP_x\sbra{f(L_{\bm{e}_q})|\cF_t}\) and its limit as \(q\to 0+\) to
investigate \(\lim_{q\to 0+}\bP_x\sbra{F_t
  f(L_{\bm{e}_q})}/\bP_x\sbra{f(L_{\bm{e}_q})}\) for
bounded \(\cF_t\)-measurable functional \(F_t\).
Recall that we assume \(X\) is recurrent
and assume the condition~\ref{item:assumption}.
\subsection{The law of the local time with exponential clock}
First, we compute \(\bP_x\sbra{f(L_{\bm{e}_q})}\).
\begin{Lem}\label{Lem:expect-L_e_q}
  Let \(f\) be a non-negative measurable function.
  Then, for \(q>0\) and \(x \in \mathbb{R}\), it holds that
  \begin{align}\label{eq:lem-expect-L_e_q}
    \bP_x\sbra{f(L_{\bm{e}_q})}
    = \frac{1}{r_q(0)}\cbra*{h_q(x)f(0)
    + \rbra*{1-\frac{h_q(x)}{r_q(0)}}
    \int_0^\infty \ce^{-u/{r_q(0)}}f(u) \, \cd u}.
  \end{align}
\end{Lem}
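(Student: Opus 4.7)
The plan is to condition on whether or not the independent exponential time $\bm{e}_q$ occurs before the first hit of zero, and then exploit the memoryless property of $\bm{e}_q$ together with the subordinator structure of the inverse local time.

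First I would decompose
\begin{align}
\bP_x\sbra{f(L_{\bm{e}_q})}
= \bP_x\sbra{f(L_{\bm{e}_q}); \bm{e}_q < T_0}
+ \bP_x\sbra{f(L_{\bm{e}_q}); \bm{e}_q \ge T_0}.
\end{align}
On $\{\bm{e}_q<T_0\}$ the process has not yet visited $0$, so $L_{\bm{e}_q}=0$, and the first term equals $f(0)\,\bP_x(\bm{e}_q<T_0) = f(0)(1-\bP_x\sbra{\ce^{-qT_0}})$. Using~\eqref{eq:exp-hit-time} together with the definition $h_q(x)=r_q(0)-r_q(-x)$, this becomes $f(0)\,h_q(x)/r_q(0)$.

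For the second term, I would invoke the strong Markov property at $T_0$ together with the lack-of-memory of $\bm{e}_q$: on $\{\bm{e}_q\ge T_0\}$, the residual time $\bm{e}_q-T_0$ is again an independent exponential of rate $q$, and the shifted process starts afresh at $0$. Hence
\begin{align}
\bP_x\sbra{f(L_{\bm{e}_q}); \bm{e}_q \ge T_0}
= \bP_x\sbra{\ce^{-qT_0}} \, \bP_0\sbra{f(L_{\bm{e}_q})}
= \rbra*{1-\frac{h_q(x)}{r_q(0)}} \bP_0\sbra{f(L_{\bm{e}_q})}.
\end{align}

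The key remaining computation is $\bP_0\sbra{f(L_{\bm{e}_q})}$. Using the identity $\cbra{L_{\bm{e}_q}>l}=\cbra{\eta_l<\bm{e}_q\}$ together with the Laplace transform $\bP\sbra{\ce^{-q\eta_l}}=\ce^{-l/r_q(0)}$ from Subsection~\ref{Subsec:LT-em}, I conclude that under $\bP_0$ the random variable $L_{\bm{e}_q}$ is exponentially distributed with mean $r_q(0)$, so
\begin{align}
\bP_0\sbra{f(L_{\bm{e}_q})} = \frac{1}{r_q(0)}\int_0^\infty \ce^{-u/r_q(0)} f(u)\,\cd u.
\end{align}
Plugging this back and factoring out $1/r_q(0)$ yields~\eqref{eq:lem-expect-L_e_q}.

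There is no real obstacle here; the only point that deserves care is the combined use of the strong Markov property at $T_0$ and the memoryless property of $\bm{e}_q$, which must be invoked jointly to replace $\bm{e}_q-T_0$ by an independent copy of $\bm{e}_q$ on $\{\bm{e}_q\ge T_0\}$. After that, everything reduces to the explicit distribution of $L_{\bm{e}_q}$ under $\bP_0$, which is exponential with parameter $1/r_q(0)$.
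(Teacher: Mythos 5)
Your proof is correct. The decomposition at $T_0$ (conditioning on whether $\bm{e}_q$ lands before or after the first visit to zero, then using strong Markov plus the lack-of-memory property) is exactly the decomposition the paper uses, only the paper phrases it through the integral $\bP_x\sbra{\int_0^\infty f(L_t) q\ce^{-qt}\,\cd t}$ split at $T_0$. Where you diverge is in the evaluation of $\bP_0\sbra{f(L_{\bm{e}_q})}$: the paper expands this via the excursion-theoretic compensation formula (Lemma~\ref{Lem:compensation-formula}), converting the time integral to a sum over excursion intervals and then applying~\eqref{eq:exp-nt0}--\eqref{eq:rel-n-r_q}; you instead observe directly that $\{L_{\bm{e}_q}>l\}=\{\eta_l<\bm{e}_q\}$ and read off from $\bP\sbra{\ce^{-q\eta_l}}=\ce^{-l/r_q(0)}$ that $L_{\bm{e}_q}$ under $\bP_0$ is exponential with mean $r_q(0)$. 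Your route is more elementary, avoiding the compensation formula entirely, and both routes ultimately rest on the same Laplace-transform identity for $\eta_l$; the paper's route has the minor advantage of fitting the template it reuses in later excursion computations (e.g.\ in Section~\ref{Sec:mart}), but for this lemma alone your derivation is cleaner.
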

\begin{proof}
  Using the excursion theory, we have
  \begin{align}
    \bP_0\sbra*{\int_0^\infty f(L_t) q \ce^{-qt}\, \cd t}
     & = \bP_0\sbra*{\sum_{u\in D}
    \int_{\eta_{u-}}^{\eta_u} f(u)q\ce^{-qu}\, \cd u} \\
     & = \bP_0 \otimes \widetilde{n}
    \sbra*{\int_0^\infty \cd L_t \, f(L_t) \ce^{-qt}
      \int_0^{\widetilde{T}_0} \cd u \, q\ce^{-qu}},
  \end{align}
  where the last equality follows from Lemma~\ref{Lem:compensation-formula}.
  By~\eqref{eq:exp-nt0} and~\eqref{eq:rel-n-r_q}, we have
  \begin{align}
    \bP_0 \otimes \widetilde{n}
    \sbra*{\int_0^\infty \cd L_t \, f(L_t) \ce^{-qt}
      \int_0^{\widetilde{T}_0} \cd u \, q\ce^{-qu}}
     & = \bP_0\sbra*{\int_0^\infty \cd u \, f(u) \ce^{-q\eta_u}}
    n\sbra*{1 - \ce^{-qT_0}}                                            \\
     & = \frac{1}{r_q(0)}\int_0^\infty f(u) \ce^{-u/{r_q(0)}} \, \cd u.
  \end{align}
  Applying the Markov property, we obtain
  \begin{align}
    \bP_x\sbra{f(L_{\bm{e}_q})}
     & = \bP_x \sbra*{\int_0^\infty f(L_t)q\ce^{-qt}\, \cd t}                    \\
     & = \bP_x\sbra*{\int_0^{T_0} f(0) q\ce^{-qt}\, \cd t}
    + \bP_x\sbra{\ce^{-qT_0}}\bP_0\sbra*{\int_0^\infty f(L_t)q\ce^{-qt}\, \cd t} \\
     & = f(0)\rbra*{1-\frac{r_q(-x)}{r_q(0)}}
    + \frac{r_q(-x)}{r_q(0)} \int_0^\infty f(u) \ce^{-u/{r_q(0)}}\, \cd u        \\
     & = \frac{1}{r_q(0)}\cbra*{h_q(x)f(0)
    + \rbra*{1-\frac{h_q(x)}{r_q(0)}}\int_0^\infty \ce^{-u/{r_q(0)}}f(u) \, \cd u},
  \end{align}
  here we used~\eqref{eq:exp-hit-time}.
  Therefore we obtain the desired result.
\end{proof}

\subsection{A.s.\ convergence for exponential clock}
To calculate \(\bP_x\sbra{F(L_{\bm{e}_q})|\cF_t}\),
we separate into the two cases \(\cbra{t<\bm{e}_q }\) and \(\cbra{\bm{e}_q\le t}\).

Let \(f \in \cL_{+}^1\) and \(x \in \bR\). For \(q > 0\),
define
\begin{align}
  N_t^{q} & = r_q(0) \bP_x\sbra{f(L_{\bm{e}_q}); t < \bm{e}_q | \cF_t}, \\
  M_t^{q} & = r_q(0) \bP_x\sbra{f(L_{\bm{e}_q}) | \cF_t},               \\
  A_t^{q} & = M_t^{q} - N_t^{q}
  = r_q(0) \bP_x\sbra{f(L_{\bm{e}_q}); \bm{e}_q \le t | \cF_t},
\end{align}
and \(h^{(\gamma)}\) and \(M_t^{(\gamma)}\) are defined in~\eqref{eq:def-h-gamma}
and~\eqref{eq:def-mart}.

\begin{Thm}
  For \(f \in \cL_{+}^1\) and \(x \in \bR\), it holds that
  \begin{align}
    \lim_{q\to 0+}N_t^{q}
    =\lim_{q\to 0+} M_t^{q}
    = M_t^{(0)}, \qquad \bP_x\text{-a.s.}
  \end{align}
\end{Thm}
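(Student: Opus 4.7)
The plan is to compute $N_t^q$ explicitly via the Markov property and Lemma~\ref{Lem:expect-L_e_q}, take the limit $q\to 0+$ term by term, and then show that the remainder $A_t^q := M_t^q - N_t^q$ vanishes in the limit.

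First I would condition at time $t$ on the event $\{t<\bm{e}_q\}$. By the lack-of-memory of $\bm{e}_q$ and the strong Markov property, writing $L^\prime$ for the local time of the shifted process (independent of $\cF_t$), we have $L_{\bm{e}_q} = L_t + L^\prime_{\bm{e}_q - t}$ on $\{t<\bm{e}_q\}$, so
\begin{align}
  \bP_x\sbra{f(L_{\bm{e}_q}); t<\bm{e}_q \mid \cF_t}
  = \ce^{-qt}\, \bP_{X_t}\sbra{f_{L_t}(L_{\bm{e}_q})},
\end{align}
where $f_a(u) := f(a+u)$ and $L_t, X_t$ act as constants for the inner expectation. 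Applying Lemma~\ref{Lem:expect-L_e_q} to $f_{L_t}$ gives
\begin{align}
  N_t^q = \ce^{-qt}\cbra*{h_q(X_t) f(L_t) + \rbra*{1-\frac{h_q(X_t)}{r_q(0)}}\int_0^\infty \ce^{-u/r_q(0)} f(L_t+u)\, \cd u}.
\end{align}

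Next I would let $q\to 0+$. By Theorem~\ref{Thm:exist-h}(\ref{Thm-item:exist-h}), $h_q(X_t)\to h(X_t)$. Recurrence combined with~\eqref{eq:kappa} gives $r_q(0)\to\infty$, so $h_q(X_t)/r_q(0)\to 0$ and $\ce^{-u/r_q(0)}\to 1$ for each $u$. Since $f\in\cL^1_+$, the dominated convergence theorem (bound $\ce^{-u/r_q(0)}f(L_t+u)\le f(L_t+u)$, integrable in $u$) yields
\begin{align}
  \int_0^\infty \ce^{-u/r_q(0)}f(L_t+u)\,\cd u \longrightarrow \int_0^\infty f(L_t+u)\,\cd u,
\end{align}
and hence $N_t^q \to h(X_t)f(L_t) + \int_0^\infty f(L_t+u)\,\cd u = M_t^{(0)}$, $\bP_x$-a.s.

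It remains to show $A_t^q\to 0$, $\bP_x$-a.s. Since $\bm{e}_q$ is independent of $\cF_t$ and $L_s$ is $\cF_t$-measurable for $s\le t$,
\begin{align}
  A_t^q = r_q(0)\int_0^t q\ce^{-qs} f(L_s)\,\cd s \le q\, r_q(0) \int_0^t f(L_s)\,\cd s.
\end{align}
By Lemma~\ref{Lem:inte-of-varPsi}(\ref{Lem-item:qr_q}), $q\,r_q(0)\to 0$, so the result will follow once I verify $\int_0^t f(L_s)\,\cd s<\infty$, $\bP_x$-a.s. This is the only genuine obstacle: I would derive it from Lemma~\ref{Lem:expect-L_e_q} by noting that $\bP_x\sbra{\int_0^\infty \ce^{-s}f(L_s)\,\cd s} = \bP_x\sbra{f(L_{\bm{e}_1})}$ is finite, so $\int_0^\infty \ce^{-s}f(L_s)\,\cd s<\infty$ $\bP_x$-a.s., and consequently $\int_0^t f(L_s)\,\cd s \le \ce^t \int_0^t \ce^{-s}f(L_s)\,\cd s<\infty$ $\bP_x$-a.s. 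Combining these two facts, $A_t^q\to 0$ $\bP_x$-a.s., and therefore $M_t^q = N_t^q + A_t^q \to M_t^{(0)}$ $\bP_x$-a.s.
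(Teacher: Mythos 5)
Your proof is correct and follows essentially the same route as the paper: apply the Markov property and the lack of memory of \(\bm{e}_q\) to write \(N_t^q\) explicitly via Lemma~\ref{Lem:expect-L_e_q}, pass to the limit term by term, and kill the remainder \(A_t^q = q r_q(0)\int_0^t \ce^{-qu}f(L_u)\,\cd u\) using \(q r_q(0)\to 0\). One small and worthwhile addition: the paper asserts \(A_t^q\to 0\) \(\bP_x\)-a.s.\ without explicitly checking that \(\int_0^t f(L_s)\,\cd s<\infty\) \(\bP_x\)-a.s., which is not completely obvious for unbounded \(f\in\cL^1_+\); you supply the short argument (that \(\bP_x[f(L_{\bm{e}_1})] = \bP_x[\int_0^\infty \ce^{-s}f(L_s)\,\cd s]<\infty\) by Lemma~\ref{Lem:expect-L_e_q}), which is the right way to close that gap. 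The only cosmetic difference is how you justify \(1-h_q(X_t)/r_q(0)\to 1\): you invoke \(h_q(X_t)\to h(X_t)<\infty\) together with \(r_q(0)\to\infty\) (from \(\kappa=0\)), whereas the paper observes this factor equals \(\bP_{X_t}[\ce^{-qT_0}]\) and uses recurrence; both are valid.
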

\begin{proof}
  By the Markov property and the additivity of \(L\),
  we have
  \begin{align}
    N_t^{q}
     & = r_q(0) \bP_x\sbra{f(L_{\bm{e}_q}); t < \bm{e}_q | \cF_t}
    \\
     & = r_q(0) \ce^{-qt} \widetilde{\bP}_{X_t}
    \sbra{f(L_t+\widetilde{L}_{\widetilde{\bm{e}}_q})}
    \\
     & = \ce^{-qt}
    \cbra*{h_q(X_t)f(L_t) +
      \rbra*{1-\frac{h_q(X_t)}{r_q(0)}}
      \int_0^\infty \ce^{-u/r_q(0)} f(L_t + u) \, \cd u},
  \end{align}
  here the last equality, we used Lemma~\ref{Lem:expect-L_e_q}.
  Since \(1-\frac{h_q(X_t)}{r_q(0)}=\bP_{X_t}\sbra{\ce^{-qT_0}} \to 1\),
  \(\bP_x\)-a.s.\ as \(q\to 0+\) and
  since \(\int_0^\infty f(L_t+u)\, \cd u<\infty\),
  we may apply the dominated convergence theorem to deduce that
  \(N_t^{q} \longrightarrow M_t^{(0)}\),
  \(\bP_x\)-a.s.\ as \(q\to 0+\).
  By (\ref{Lem-item:qr_q}) of Lemma~\ref{Lem:inte-of-varPsi}, we have
  \begin{align}
    A_t^{q} & = r_q(0) \bP_x\sbra{f(L_{\bm{e}_q}); \bm{e}_q \le t | \cF_t}
    = qr_q(0) \int_0^t f(L_u) \ce^{-qu} \, \cd u
    \to 0
  \end{align}
  \(\bP_x\)-a.s.\ as \(q \to 0+\).
  Therefore, we obtain \(M_t^{q} \to M_t^{(0)}\),
  \(\bP_x\)-a.s.\ as \(q \to 0+\).
\end{proof}

\subsection{\(\cL^1\) convergence for exponential clock}
Now we prepare some lemma to prove the \(\cL^1\)
convergence for exponential clock.
The following lemma is a part of
Theorem 15.2 of Tsukada~\cite{MR3838874}.
\begin{Lem}[{\cite[Theorem 15.2]{MR3838874}}]\label{Lem:h-l1-conv}
  For \(t \ge 0\), it holds that
  \(
  h_q(X_t) \longrightarrow h(X_t)
  \)
  in \(\cL^1(\bP_x)\) as \(q \to 0+\).
\end{Lem}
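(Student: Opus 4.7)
The strategy is to combine the a.s.\ convergence $h_q(X_t)\to h(X_t)$ with convergence of $\cL^1$-norms and conclude by Scheff\'{e}'s lemma. Almost sure convergence is immediate from Theorem~\ref{Thm:exist-h}(\ref{Thm-item:unif-conv-h}), since $h_q\to h$ uniformly on compact sets and $X_t$ is $\bP_x$-a.s.\ finite; by Scheff\'{e} it then suffices to verify that $\bP_x\sbra{h_q(X_t)}\to\bP_x\sbra{h(X_t)}<\infty$.

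For the limit of the expectations I would first exploit $r_q(-y)=\bP_y\sbra*{\int_0^\infty \ce^{-qs}\,\cd L_s}$ from~\eqref{eq:regularity-of-L} together with the Markov property at time $t$, obtaining
\begin{align}
\bP_x\sbra{r_q(-X_t)}
=\ce^{qt}\bP_x\sbra*{\int_t^\infty \ce^{-qs}\,\cd L_s}
=\ce^{qt}\rbra*{r_q(-x)-\bP_x\sbra*{\int_0^t \ce^{-qs}\,\cd L_s}},
\end{align}
and hence
\begin{align}
\bP_x\sbra{h_q(X_t)}
=h_q(x)-(\ce^{qt}-1)r_q(-x)+\ce^{qt}\bP_x\sbra*{\int_0^t \ce^{-qs}\,\cd L_s}.
\end{align}
Letting $q\to 0+$, these three terms converge respectively to $h(x)$ by Theorem~\ref{Thm:exist-h}(\ref{Thm-item:exist-h}), to $0$ by Lemma~\ref{Lem:inte-of-varPsi}(\ref{Lem-item:qr_q}), and to $\bP_x\sbra{L_t}$ by monotone convergence, giving $\bP_x\sbra{h_q(X_t)}\to h(x)+\bP_x\sbra{L_t}$.

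To identify this limit with $\bP_x\sbra{h(X_t)}$ I would apply Theorem~\ref{Thm:martingale} with $\gamma=0$ and $f(u)=\ce^{-\alpha u}\in\cL^1_+$: the corresponding martingale is $M_t^{(0)}=\ce^{-\alpha L_t}\rbra*{h(X_t)+1/\alpha}$, and taking $\bP_x$-expectations yields
\begin{align}
\bP_x\sbra{\ce^{-\alpha L_t}h(X_t)}
=h(x)+\frac{1-\bP_x\sbra{\ce^{-\alpha L_t}}}{\alpha}.
\end{align}
Sending $\alpha\to 0+$ and using monotone convergence on each side (in particular $(1-\ce^{-\alpha u})/\alpha\nearrow u$ for $u\ge 0$) produces the identity $\bP_x\sbra{h(X_t)}=h(x)+\bP_x\sbra{L_t}$. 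Scheff\'{e}'s lemma then delivers the claimed $\cL^1$ convergence.

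The main obstacle is this identification step, since Theorem~\ref{Thm:martingale} is invoked although its proof is deferred to Subsection~\ref{Subsec:pf-hitting-l1}; one must check that the proof of Theorem~\ref{Thm:martingale} does not in turn rely on the present lemma. If a circular dependency arises, the identity $\bP_x\sbra{h(X_t)}=h(x)+\bP_x\sbra{L_t}$ can instead be obtained directly by decomposing at $T_0$ via the strong Markov property and invoking the invariance of $h$ for the process killed at $0$ (Theorem~\ref{Thm:hg-invariant}). A purely uniform-integrability argument via subadditivity of $h_q$ (yielding a bound of the form $h_q(X_t)\le C(\abs{X_t}+1)$) appears insufficient, since $\abs{X_t}$ need not be integrable under assumption~\ref{item:assumption} alone.
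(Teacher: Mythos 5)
The paper does not reprove this lemma; it cites Tsukada~\cite[Theorem~15.2]{MR3838874}, so there is no internal proof to compare line by line. Your a.s.\ convergence step is fine, and your computation of \(\bP_x\sbra{h_q(X_t)}\to h(x)+\bP_x\sbra{L_t}\) via the Markov property and~\eqref{eq:regularity-of-L} is correct and clean. The problem is the identification \(\bP_x\sbra{h(X_t)}=h(x)+\bP_x\sbra{L_t}\).

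You correctly suspect circularity in invoking Theorem~\ref{Thm:martingale}: its proof (end of Section~\ref{Subsec:pf-hitting-l1}) rests on \((M^{(0)}_t)\) being a martingale, which is Theorem~\ref{Thm:exp-L^1-conv}, whose proof in turn uses the very Lemma~\ref{Lem:h-l1-conv} you are trying to establish. So the primary route is indeed circular. The fallback you sketch is underdeveloped: decomposing \(\bP_x\sbra{h(X_t)}\) at \(T_0\) and using the invariance \(\bP_x\sbra{h(X_t);T_0>t}=h(x)\) reduces the problem to computing \(\bP_0\sbra{h(X_s)}\), which the strong Markov property alone leaves circular (since \(T_0=0\) under \(\bP_0\)); one must additionally invoke the compensation formula (Lemma~\ref{Lem:compensation-formula}) together with the \emph{second} identity \(n\sbra{h(X_s);T_0>s}=1\) of Theorem~\ref{Thm:hg-invariant} to obtain \(\bP_0\sbra{h(X_s)}=\bP_0\sbra{L_s}\), and one must also check that the Pant\'{\i}-style proof of Theorem~\ref{Thm:hg-invariant} at \(\gamma=0\) does not itself rely on the present \(\cL^1\) convergence. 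As written, this chain is not closed.

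Most importantly, your dismissal of a domination argument is premature, and in fact that is the cleanest (and presumably Tsukada's) route. By subadditivity \(h_q(x)\le h_q(x)+h_q(-x)\), and by~\eqref{eq:h_q^s}--\eqref{eq:hs-conv-DCT} one has, uniformly in \(q>0\),
\begin{align}
  0\le h_q(x)\le \bar h(x)\coloneqq
  \frac{2}{\pi}\int_0^\infty\frac{1-\cos\lambda x}{\abs{\varPsi(\lambda)}}\,\cd\lambda,
\end{align}
and \(\bP_x\sbra{\bar h(X_t)}<\infty\) follows directly by Tonelli and the characteristic function: writing \(1-\bP_x\sbra{\cos\lambda X_t}=(1-\ce^{-t\theta(\lambda)})+\ce^{-t\theta(\lambda)}\rbra*{1-\cos(\lambda x-t\omega(\lambda))}\) and using \(\theta\le\abs{\varPsi}\), \(\omega^2\le\abs{\varPsi}^2\) and Lemma~\ref{Lem:inte-of-varPsi}(\ref{Lem-item:infty-inte}),(\ref{Lem-item:zero-inte}) makes each piece integrable in \(\lambda\). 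Dominated convergence then gives the \(\cL^1\) limit without touching any downstream martingale or invariance result. Your objection that \(\abs{X_t}\) need not be integrable is beside the point: the correct dominant is \(\bar h(X_t)\), not \(C(\abs{X_t}+1)\). Note that the paper itself signals this in Subsection~\ref{Subsec:pf-hitting-l1} with the remark ``By the proof of Lemma~\ref{Lem:h-l1-conv}, we know \(\bP_x\sbra{h(X_t)+h(-X_t)}<\infty\).''
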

The next theorem is the penalization result with exponential clock.
\begin{Thm}\label{Thm:exp-L^1-conv}\label{Thm:exp-time-result}
  Let \(f \in \cL_{+}^1\) and \(x \in \bR\). Then
  \(( M_t^{(0)}, t \ge 0)\) is a non-negative
  \(((\cF_t), \bP_x)\)-martingale,
  and it holds that
  \begin{align}
    \lim_{q\to 0+}N_t^{q}
    = \lim_{q\to 0+}M_t^{q}
    = M_t^{(0)},
    \qquad \text{in \(\cL^1(\bP_x)\).}
  \end{align}
  Consequently, if \(M_0^{(0)}>0\) under \(\bP_x\),
  it holds that
  \begin{align}\label{eq:exp-penalized-meas}
    \frac{\bP_x\sbra{F_t f(L_{\bm{e}_q})}}{\bP_x\sbra{f(L_{\bm{e}_q})}}
    \longrightarrow \bP_x\sbra*{F_t \frac{M_t^{(0)}}{M_0^{(0)}}},
    \qquad \text{as \(q \to 0+\),}
  \end{align}
  for all bounded \(\cF_t\)-measurable functionals \(F_t\).
\end{Thm}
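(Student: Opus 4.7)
The plan is to leverage the a.s.\ convergence from the preceding theorem and apply Scheff\'{e}'s lemma. Observe first that \(M_t^q = r_q(0)\bP_x\sbra{f(L_{\bm{e}_q})|\cF_t}\) is, by construction, a conditional expectation of a fixed non-negative random variable, hence a non-negative uniformly integrable \(((\cF_t), \bP_x)\)-martingale with constant expectation \(\bP_x\sbra{M_t^q} = M_0^q = r_q(0) \bP_x\sbra{f(L_{\bm{e}_q})}\). Applying Lemma~\ref{Lem:expect-L_e_q},
\[
M_0^q = h_q(x) f(0) + \rbra*{1 - \frac{h_q(x)}{r_q(0)}} \int_0^\infty \ce^{-u/r_q(0)} f(u)\, \cd u.
\]
Passing to \(q\to 0+\) and using Theorem~\ref{Thm:exist-h} (so \(h_q(x)\to h(x)\)), the recurrence assumption (so \(r_q(0)\to\infty\) by~\eqref{eq:kappa} with \(\kappa=0\)), and dominated convergence for the integral (dominated by \(f\)), we obtain \(M_0^q\to h(x)f(0)+\int_0^\infty f(u)\,\cd u = M_0^{(0)}\).

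Next I would identify \(\bP_x\sbra{M_t^{(0)}}\). Fatou's lemma applied to the a.s.-converging martingales yields \(\bP_x\sbra{M_t^{(0)}|\cF_s}\le M_s^{(0)}\), so \(M_t^{(0)}\) is at least a non-negative supermartingale. To promote this to a martingale---equivalently, to obtain \(\bP_x\sbra{M_t^{(0)}} = M_0^{(0)}\)---I would invoke Theorem~\ref{Thm:martingale} with \(\gamma=0\), whose independent proof is given in Section~\ref{Subsec:pf-hitting-l1}. Combined with the first step, this yields \(\bP_x\sbra{M_t^q}\to\bP_x\sbra{M_t^{(0)}}\); together with the a.s.\ convergence and non-negativity, Scheff\'{e}'s lemma upgrades it to \(M_t^q\to M_t^{(0)}\) in \(\cL^1(\bP_x)\). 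The residual \(A_t^q = M_t^q - N_t^q\ge 0\) is dominated by the uniformly integrable sequence \(\cbra{M_t^q}\) and tends to zero \(\bP_x\)-a.s., hence also in \(\cL^1\), so \(N_t^q\to M_t^{(0)}\) in \(\cL^1\) as well.

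For the penalization statement, the tower property gives \(\bP_x\sbra{F_t f(L_{\bm{e}_q})} = \bP_x\sbra{F_t M_t^q}/r_q(0)\) and \(\bP_x\sbra{f(L_{\bm{e}_q})} = M_0^q/r_q(0)\), so the ratio in~\eqref{eq:exp-penalized-meas} equals \(\bP_x\sbra{F_t M_t^q}/M_0^q\). Boundedness of \(F_t\) combined with the \(\cL^1\) convergence of \(M_t^q\) and \(M_0^q \to M_0^{(0)} > 0\) yields the stated limit \(\bP_x\sbra{F_t M_t^{(0)}/M_0^{(0)}}\). The principal obstacle is the equality \(\bP_x\sbra{M_t^{(0)}} = M_0^{(0)}\): Fatou alone gives only the supermartingale inequality, so a direct verification bypassing Theorem~\ref{Thm:martingale} would require the rewrite \(\int_0^\infty f(L_t+u)\,\cd u = \int_0^\infty f(u)\,\cd u - \int_0^{L_t} f(v)\,\cd v\) together with the compensation formula (Lemma~\ref{Lem:compensation-formula}) to establish the cross identity \(\bP_x\sbra{h(X_t) f(L_t)} = h(x) f(0) + \bP_x\sbra{\int_0^{L_t} f(v)\,\cd v}\).
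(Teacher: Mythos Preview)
Your appeal to Theorem~\ref{Thm:martingale} is circular. The proof of Theorem~\ref{Thm:martingale} in Section~\ref{Subsec:pf-hitting-l1} reads ``Since \((M_t^{(0)},t\ge 0)\) and \((X_t f(L_t),t\ge 0)\) are \(((\cF_t),\bP_x)\)-martingales, the process \(M_t^{(\gamma)}=M_t^{(0)}+\frac{\gamma}{m^2} X_t f(L_t)\) is also a martingale''; the martingale property of \(M_t^{(0)}\) that it takes for granted is precisely the assertion of Theorem~\ref{Thm:exp-L^1-conv}. So invoking Theorem~\ref{Thm:martingale} here does not supply the missing equality \(\bP_x\sbra{M_t^{(0)}}=M_0^{(0)}\).

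The paper closes the gap differently: for bounded \(f\) it proves the \(\cL^1\) convergence \(N_t^q\to M_t^{(0)}\) directly, term by term, using Lemma~\ref{Lem:h-l1-conv} (Tsukada's result that \(h_q(X_t)\to h(X_t)\) in \(\cL^1(\bP_x)\)) for the \(h_q(X_t)f(L_t)\) piece and dominated convergence for the integral piece; the martingale identity \(\bP_x\sbra{M_t^{(0)}\mid\cF_s}=M_s^{(0)}\) then follows by passing to the limit in \(\bP_x\sbra{M_t^q\mid\cF_s}=M_s^q\). Only after the martingale property is in hand does the paper extend to general \(f\in\cL_{+}^1\) by truncating to \(f\wedge n\), using monotone convergence to retain the martingale identity, and then running the Scheff\'{e} argument you describe. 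Your alternative route via the compensation formula and the invariance identity \(\bP_x\sbra{h(X_t)f(L_t)}=h(x)f(0)+\bP_x\sbra{\int_0^{L_t}f}\) can be made to work (it is essentially Theorem~\ref{Thm:hg-invariant}, whose \(\gamma=0\) case is proved independently following Pant\'{\i}), but it is a detour compared with the direct use of Lemma~\ref{Lem:h-l1-conv}.
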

Note that the penalized measure in~\eqref{eq:exp-penalized-meas}
is not the same as that of Theorems~\ref{Thm:hitting-time-result}
and~\ref{Thm:two-point-hitting-time-result}.
\begin{proof}[Proof of Theorem~\ref{Thm:exp-L^1-conv}]
  We first consider the case where \(f\) is bounded.
  We write
  \begin{align}
    N_t^{q}
     & = \ce^{-qt}\cbra*{h_q(X_t)f(L_t) +
      \rbra*{1-\frac{h_q(X_t)}{r_q(0)}}
    \int_0^\infty \ce^{-u/r_q(0)} f(L_t + u) \, \cd u} \\
     & \eqqcolon {(\mathrm{I})}_q + {(\mathrm{II})}_q, \\
    M_t^{(0)}
     & = h(X_t)f(L_t) +
    \int_0^\infty
    f(L_t + u) \, \cd u                                \\
     & \eqqcolon {(\mathrm{I})} + {(\mathrm{II})}.
  \end{align}
  By Lemma~\ref{Lem:h-l1-conv}
  and by the boundedness of \(f\),
  we obtain \({(\mathrm{I})}_q \to (\mathrm{I})\) in \(\cL^1(\bP_x)\).
  Moreover, since \(\int_0^\infty f(u)\, \cd u<\infty\),
  it follows from the dominated convergence theorem that
  \({(\mathrm{II})}_q \to (\mathrm{II})\) in \(\cL^1(\bP_x)\).
  Hence we obtain \(N_t^{q} \to M_t^{(0)}\) in \(\cL^1(\bP_x)\).
  By (\ref{Lem-item:qr_q}) of Lemma~\ref{Lem:inte-of-varPsi},
  we have
  \begin{align}
    \bP_x\sbra{A_t^{q}}
    = q r_q(0) \bP_x\sbra*{\int_0^t \ce^{-qu} f(L_u)\ \cd u}
    \le q r_q(0) t\norm{f}
    \to 0, \qquad \text{as } q \to {0+}.
  \end{align}
  Since \(A_t^q\ge 0\),
  this means that \(A_t^q\to 0\) in \(\cL^1(\bP_x)\). Thus we have
  \(M_t^{q} \to M_t^{(0)}\) in \(\cL^1(\bP_x)\).
  For \(0 \le s \le t\), we know \(\bP_x\sbra{M_t^{q}|\cF_s} = M_s^{q}\).
  Letting \(q \to 0+\) on both sides, we have
  \begin{align}\label{eq:exp-mart}
    \bP_x\sbra{M_t^{(0)}|\cF_s} = M_s^{(0)},
  \end{align}
  which means that \((M_t^{(0)},t\ge 0)\) is a non-negative
  \(((\cF_t), \bP_x)\)-martingale.\

  Let us consider the general \(f \in \cL_{+}^1\).
  We know the equality~\eqref{eq:exp-mart} holds for \(f \wedge n\).
  Letting \(n \to \infty\),~\eqref{eq:exp-mart}
  holds for general \(f \in \cL_{+}^1\)
  by the monotone convergence theorem.
  Hence we have
  \begin{align}
    \lim_{q \to 0+}\bP_x\sbra{M_t^{q}} = \lim_{q \to 0+}M_0^{q}
    = M_0^{(0)} = \bP_x\sbra{M_t^{(0)}},
  \end{align}
  and by Fatou's lemma,
  \begin{align}
    M_0^{(0)}
    = \lim_{q \to 0+}\bP_x\sbra{M_t^{q}}
    \ge \limsup_{q \to 0+}\bP_x\sbra{N_t^{q}}
    \ge \liminf_{q \to 0+}\bP_x\sbra{N_t^{q}}
    \ge M_0^{(0)}.
  \end{align}
  Thus we have
  \(\lim_{q\to 0+}\bP_x\sbra{N_t^{q}}=\lim_{q\to 0+}\bP_x\sbra{M_t^{q}}
  =\bP_x\sbra{M_t^{(0)}}\).
  Applying Scheff\'{e}'s lemma, we obtain
  \(\lim_{q\to 0+}N_t^{q}=\lim_{q\to 0+} M_t^{q} = M^{(0)}_t\) in \(\cL^1(\bP_x)\).
\end{proof}

\section{Local time penalization with hitting time clock}\label{Sec:hitting-time}
We deal with the penalization result with hitting time clock \((T_a)\).
To this aim, we compute \(\bP_x\sbra{L_{T_a}}\)
and \(\bP_x\sbra{L_{T_a}|\cF_t}\) and its limit as \(a\to\pm\infty\).
Recall that we assume \(X\) is recurrent
and assume the condition~\ref{item:assumption}.
\subsection{The law of the local time with hitting time clock}
First, we compute \(\bP_x\sbra{f(L_{T_a})}\).
\begin{Lem}\label{Lem:expect-L_T_a}
  For \(x, a \in \bR\), \(a\ne 0\) and any non-negative measurable
  function \(f\), we have
  \begin{align}
    \bP_x\sbra{f(L_{T_a})}
    = \bP_x(T_0 > T_a) f(0)
    + \frac{\bP_x(T_0<T_a) }{h^B(a)}
    \int_0^\infty \ce^{-u/h^B(a)} f(u) \, \cd u.
  \end{align}
\end{Lem}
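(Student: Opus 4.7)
The plan is to split the expectation according to whether $0$ or $a$ is hit first, reduce the case $\{T_0<T_a\}$ to a computation under $\bP_0$ via the strong Markov property, and then invoke the earlier results on the distribution of $L_{T_a}$ under $\bP_0$.

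\textbf{Step 1: Splitting.} Since $a\neq 0$, we have $\bP_x(T_0=T_a)=0$ (using the continuity of the resolvent), so almost surely either $T_a<T_0$ or $T_0<T_a$. I would write
\begin{align}
\bP_x[f(L_{T_a})] = \bP_x[f(L_{T_a});\,T_a<T_0] + \bP_x[f(L_{T_a});\,T_0<T_a].
\end{align}

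\textbf{Step 2: The event $\{T_a<T_0\}$.} On this event the path does not visit $0$ before $T_a$, so the local time at $0$ remains at its initial value $0$; hence $L_{T_a}=0$ $\bP_x$-a.s.\ on $\{T_a<T_0\}$, and the first term equals $\bP_x(T_a<T_0)f(0)=\bP_x(T_0>T_a)f(0)$.

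\textbf{Step 3: The event $\{T_0<T_a\}$.} Here I would apply the strong Markov property at $T_0$. Since $L_{T_0}=0$ $\bP_x$-a.s.\ (before $T_0$ the path has not visited $0$) and since the hitting time of $a$ after $T_0$ is $T_0+T_a\circ\theta_{T_0}$, the additivity of local time gives $L_{T_a}=L_{T_a}\circ\theta_{T_0}$ on this event, and
\begin{align}
\bP_x[f(L_{T_a});\,T_0<T_a] = \bP_x(T_0<T_a)\,\bP_0[f(L_{T_a})].
\end{align}

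\textbf{Step 4: Computing $\bP_0[f(L_{T_a})]$.} By Theorem~\ref{Thm:n-two-hitting-time}, $L_{T_a}$ under $\bP_0$ is exponentially distributed with rate $n(T_a<T_0)=1/h^B(a)$ (this was established in~\eqref{eq:L_T_a-exp-distrib}). Therefore
\begin{align}
\bP_0[f(L_{T_a})] = \frac{1}{h^B(a)}\int_0^\infty \ce^{-u/h^B(a)}f(u)\,\cd u.
\end{align}

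Combining the three pieces yields the claimed identity. I do not anticipate any genuine obstacle: the only subtle point is verifying that $L_{T_a}=0$ on $\{T_a<T_0\}$ (which uses that $L$ only grows on the closure of the zero set of $X$) and carefully invoking the strong Markov property with the additivity of $L$; everything else is a direct substitution using Theorem~\ref{Thm:n-two-hitting-time}.
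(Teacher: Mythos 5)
Your proof is correct and takes essentially the same route as the paper: both split $\bP_x[f(L_{T_a})]$ at $T_0$ via the (strong) Markov property, observe that $L_{T_a}=0$ on $\{T_a<T_0\}$, and then use the exponential distribution of $L_{T_a}$ under $\bP_0$ from~\eqref{eq:L_T_a-exp-distrib} (Theorem~\ref{Thm:n-two-hitting-time}) to evaluate $\bP_0[f(L_{T_a})]$. The only difference is cosmetic—you split first and then apply the strong Markov property to the second piece, whereas the paper applies the Markov property directly to obtain the two-term decomposition.
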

\begin{proof}
  By~\eqref{eq:L_T_a-exp-distrib},
  \(L_{T_a}\) is exponentially distributed with
  parameter \(1/h^B(a)\).
  Hence we have
  \begin{align}
    \bP_0\sbra{f(L_{T_a})}
    = \frac{1}{h^B(a)}
    \int_0^\infty \ce^{-u/h^B(a)} f(u) \, \cd u.
  \end{align}
  Applying the Markov property,
  we conclude that
  \begin{align}
    \bP_x\sbra{f(L_{T_a})}
     & = \bP_x( T_0 > T_a) f(0) + \bP_x(T_0 < T_a) \bP_0\sbra{f(L_{T_a})} \\
     & = \bP_x(T_0 > T_a)f(0)
    + \frac{\bP_x(T_0<T_a) }{h^B(a)}
    \int_0^\infty \ce^{-u/h^B(a)} f(u) \, \cd u.
  \end{align}
  Hence the proof is complete.
\end{proof}

\subsection{Proof of a.s.\ convergence of
  Theorem~\ref{Thm:hitting-time-result}}\label{Subsec:pf-hitting}
We now proceed the proof of the hitting time result.
We separate into the two cases \(\cbra{t<T_a}\) and \(\cbra{T_a\le t}\).
\begin{proof}[Proof of a.s.\ convergence of Theorem~\ref{Thm:hitting-time-result}]
  By the strong Markov property, the additivity of \(L\)
  and Lemma~\ref{Lem:expect-L_T_a}, we have
  \begin{align}
    N_t^a
     & =
    1_{\cbra{t<T_a}} h^B(a)
    \widetilde{\bP}_{X_t}
    \sbra{f(\widetilde{L}_{\widetilde{T}_a}+L_t)} \\
     & =
    1_{\cbra{t<T_a}}
    \cbra*{h^B(a)\bP_{X_t}(T_0 > T_a)f(L_t)
      + \bP_{X_t}(T_0<T_a)
      \int_0^\infty \ce^{-u/h^B(a)}f(L_t+u)
      \, \cd u
    },
  \end{align}
  \(\bP_x\)-a.s.\ as \(a\to\pm\infty\).
  (\ref{Thm-item:h-h-limit}) of Theorem~\ref{Thm:property-h}
  and~\eqref{eq:two-hittig-time-prob} imply that
  \begin{gather}
    h^B(a)\bP_{X_t}(T_0> T_a)
    = h(X_t)+h(-a)-h(X_t-a)
    \longrightarrow h^{(\pm 1)}(X_t) , \\
    \bP_{X_t}(T_0<T_a) \longrightarrow 1,
  \end{gather}
  \(\bP_x\)-a.s.\ as \(a\to\pm\infty\).
  By Lemma~\ref{Thm:h^B-infty}, we obtain
  \(N_t^a \to M_t^{(\pm 1)}\),
  \(\bP_x\)-a.s.\ as \(a\to\pm\infty\).
  Furthermore, we have
  \begin{align}
    M_t^a - N_t^a
     & =
    h^B(a) \bP_x\sbra*{f(L_{T_a}); T_a\le t|\cF_t}\label{eq:hitting-A} \\
     & =
    h^B(a) f(L_{T_a}) 1_{\cbra{T_a\le t}}                              \\
     & \to 0,
    \quad \bP_x\text{-a.s.\ as \(a\to\pm\infty\).}\label{eq:hitting-Al}
  \end{align}
  Hence we obtain \(M_t^a \to M_t^{(\pm 1)}\)
  \(\bP_x\)-a.s.\ as \(a\to\pm\infty\).
\end{proof}

\subsection{Proof of \(\cL^1\) convergence
  of Theorem~\ref{Thm:hitting-time-result}}\label{Subsec:pf-hitting-l1}
\begin{proof}[Proof of \(\cL^1\) convergence of Theorem~\ref{Thm:hitting-time-result}]
  We first consider the case
  \(m^2 = \infty\).
  Then we know \(M_t^{(\pm 1)} = M_t^{(0)}\).
  Hence, by Theorem~\ref{Thm:exp-L^1-conv},
  \(( M_t^{(\pm 1)}, t \ge 0)\) is a non-negative
  \(((\cF_t), \bP_x)\)-martingale.
  Thus we have
  \begin{align}
    \bP_x\sbra{M_t^{a}} = M_0^{a}
    \to M_0^{(\pm 1)} = \bP_x\sbra{M_t^{(\pm 1)}},
    \quad \text{as \(a\to\pm\infty\).}
  \end{align}
  By Fatou's lemma, we have
  \begin{align}
    \bP_x\sbra{M_t^{(\pm 1)}}
    =\lim_{a\to\pm\infty}\bP_x\sbra{M_t^{a}}
    \ge \limsup_{a\to\pm\infty}\bP_x\sbra{N_t^{a}}
    \ge \liminf_{a\to\pm\infty}\bP_x\sbra{N_t^{a}}
    \ge \bP_x\sbra{M_t^{(\pm 1)}}.
  \end{align}
  Consequently, it holds that
  \(\bP_x\sbra{N_t^{a}}, \bP_x\sbra{N_t^{a}}
  \to \bP_x\sbra{M_t^{(\pm 1)}}\), as \(a\to\pm\infty\).
  Applying Scheff\'{e}'s lemma,
  we obtain
  \(N_t^{a}, N_t^{a} \to M_t^{(\pm 1)}\)
  in \(\cL^1(\bP_x)\) as \(a\to\pm\infty\).

  We next consider the case \(m^2 < \infty\).
  Suppose first that \(f\) is bounded.
  We write
  \begin{align}
    N_t^{a}
     & =
    1_{\cbra{t<T_a}}
    \rbra{h(X_t)+h(-a)-h(X_t-a)}f(L_t)                 \\
     & \qquad +
    1_{\cbra{t<T_a}}
    \bP_{X_t}(T_0<T_a)
    \int_0^\infty \ce^{-u/h^B(a)}f(L_t+u)
    \, \cd u                                           \\
     & \eqqcolon {(\mathrm{I})}_a + {(\mathrm{II})}_a, \\
    M_t^{{(\pm 1)}}
     & = h^{(\pm 1)}(X_t) f(L_t)
    + \int_0^\infty f(L_t+u) \, \cd u\label{eq:useless-dot}         \\
     & \eqqcolon {(\mathrm{I})} + {(\mathrm{II})}.
  \end{align}
  Since \(h\) is subadditive, we have
  \(h(X_t)+h(-a)-h(X_t-a) \le h(X_t) + h(-X_t)\).
  By the proof of Lemma~\ref{Lem:h-l1-conv},
  we know \(\bP_x\sbra{h(X_t) + h(-X_t)} < \infty\)
  and thus, by the dominated convergence theorem,
  we have
  \begin{align}
    h(X_t)+h(-a)-h(X_t-a)
    \longrightarrow
    h^{(\pm 1)}(X_t),
    \quad \text{in \(\cL^1(\bP_x)\) as \(a\to\pm\infty\).}
  \end{align}
  The boundedness of \(f\) implies that
  \({(\mathrm{I})}_a \to (\mathrm{I})\)
  in \(\cL^1(\bP_x)\) as \(a\to\pm\infty\).
  Since \({(\mathrm{II})}_a \le \int_0^\infty f(u)\, \cd u\),
  we may apply the dominated convergence theorem to conclude
  \({(\mathrm{II})}_a \to (\mathrm{II})\)
  in \(\cL^1(\bP_x)\) as \(a\to\pm\infty\).
  Hence we obtain \(N_t^{a}\to M_t^{(\pm 1)}\)
  in \(\cL^1(\bP_x)\).
  By Theorem~\ref{Thm:property-h}
  and the optional stopping theorem,
  we obtain
  \begin{align}
    \bP_x\sbra{A_t^{a}}
     & =
    h^B(a) \bP_x\sbra{f(L_{T_a}); T_a \le t}    \\
     & =
    \frac{h^B(a)}{a}\frac{a}{h(a)}
    \bP_x\sbra{h(X_{T_a})f(L_{T_a}); T_a \le t} \\
     & \le
    \frac{h^B(a)}{a}\frac{a}{h(a)}
    \bP_x\sbra{M_{T_a}^{(0)}; T_a \le t}        \\
     & =
    \frac{h^B(a)}{a}\frac{a}{h(a)}
    \bP_x\sbra{M_t^{(0)}; T_a \le t}            \\
     & \to 0,
    \qquad \text{as \(a\to\pm\infty\).}
  \end{align}
  This yields that \(A_t^{a} \to 0\) in \(\cL^1(\bP_x)\) as \(a\to\pm\infty\).
  Hence \(M_t^{a} \to  M_t^{(\pm 1)}\)
  in \(\cL^1(\bP_x)\) as \(a\to\pm\infty\).
  Since \(\bP_x\sbra{M_t^a|\cF_s}=M_s^a\) for \(0\le s\le t\),
  we let \(a\to\pm\infty\) to obtain
  \begin{align}
    \bP_x\sbra{M_t^{(\pm 1)}|\cF_s}=M_s^{(\pm 1)}.\label{eq:M^1-mart}
  \end{align}
  In particular, \(( M_t^{(\pm 1)}, t \ge 0)\) is a non-negative
  \(((\cF_t), \bP_x)\)-martingale.
  To remove the boundedness condition of \(f\), we consider \(f\wedge n\) and
  let \(n\to\infty\) in~\eqref{eq:M^1-mart}. Then we have
  \(( M_t^{(\pm 1)}, t \ge 0)\) is a non-negative
  \(((\cF_t), \bP_x)\)-martingale.
  The remainder of the proof is the same as that of
  Theorem~\ref{Thm:exp-L^1-conv}. So we omit it.
\end{proof}

\begin{Rem}\label{Rem:martingale}
  Suppose that \(m^2 < \infty\).
  Then, since \(M_t^{(0)}\) and \(M_t^{(1)}\)
  are different \(((\cF_t), \bP_x)\)-martingales,
  \(
  m^2 \rbra{M_t^{(1)} - M_t^{(0)}}
  = X_t f(L_t)
  \)
  is also a \(((\cF_t), \bP_x)\)-martingale.
  In fact, if \(X_1\) is integrable, \(\bP\sbra{X_1} = 0\) and
  \(f\) is a bounded measurable function, then
  \((X_t f(L_t),t\ge 0)\) is a \(((\cF_t), \bP_x)\)-martingale.
  For more details, see Theorem~\ref{Thm:mart-XfL}.
\end{Rem}

By Remark~\ref{Rem:martingale}, we can prove Theorem~\ref{Thm:martingale}.
\begin{proof}[Proof of Theorem~\ref{Thm:martingale}]
  Since \(h^{(\gamma)}\) is non-negative for \(-1\le \gamma\le 1\),
  we have \(M_t^{(\gamma)}\ge 0\).
  Since \((M_t^{(0)},t\ge 0)\) and \((X_t f(L_t),t\ge 0)\) are
  \(((\cF_t),\bP_x)\)-martingales, the process
  \((M_t^{(\gamma)}=M_t^{(0)}+\frac{\gamma}{m^2} X_t f(L_t),t\ge 0)\)
  is also a \(((\cF_t),\bP_x)\)-martingale.
\end{proof}

\section{Local time penalization with two-point hitting time
    clock}\label{Sec:two-hitting-time}
Let us consider the hitting time of
two-point set, i.e.,
\(T_{a,b}=T_{\cbra{a, b}} = T_a \wedge T_b\).
Recall that we assume \(X\) is recurrent
and assume the condition~\ref{item:assumption}.
\subsection{The law of the local time with two-point hitting time clock}
First, we compute \(\bP\sbra{L_{T_a \wedge T_b}},  \bP_x(T_a<T_b\wedge T_c)\)
and \(\bP_x\sbra{f(L_{T_a\wedge T_b})}\) respectively.
\begin{Lem}\label{Lem:LT-two-hitting}
  For \(a\ne b\), it holds that
  \begin{align}
    h^C(a,b)
     & \coloneqq
    \bP\sbra{L_{T_a \wedge T_b}} \\
     & =\frac{1}{h^B(a-b)}
    \cbra*{\begin{multlined}
        \rbra[\big]{h(b)+h(-a)}h(a-b)
        +\rbra[\big]{h(a)+h(-b)}h(b-a)\\
        -h(a-b)h(b-a)
      \end{multlined}}.
  \end{align}
\end{Lem}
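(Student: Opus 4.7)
The strategy is a strong Markov decomposition of $L_{T_a\wedge T_b}$ based on which point is hit first, reducing everything to $h^B$ and the two-point hitting probabilities already computed in~\eqref{eq:two-hittig-time-prob} and Lemma~\ref{Lem:h^B-hitting-h}.

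First I would split
\[
\bP\sbra{L_{T_a\wedge T_b}}
= \bP\sbra{L_{T_a};\,T_a<T_b} + \bP\sbra{L_{T_b};\,T_b<T_a},
\]
and use $\bP\sbra{L_{T_a}}=h^B(a)$ (Lemma~\ref{Lem:h^B-hitting-h}) to rewrite the first term as $h^B(a)-\bP\sbra{L_{T_a};\,T_b<T_a}$. On $\{T_b<T_a\}$ we have $L_{T_a}=L_{T_b}+(L_{T_a}-L_{T_b})$, so the problem reduces to computing $\bP\sbra{L_{T_a}-L_{T_b};\,T_b<T_a}$. Applying the strong Markov property at $T_b$ gives
\[
\bP\sbra{L_{T_a}-L_{T_b};\,T_b<T_a} = \bP(T_b<T_a)\cdot\bP_b\sbra{L_{T_a}},
\]
so that
\[
h^C(a,b) = h^B(a) - \bP(T_b<T_a)\,\bP_b\sbra{L_{T_a}}.
\]

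Next I would compute $\bP_b\sbra{L_{T_a}}$ for $b\ne 0$. Since $L$ does not increase before $T_0$ under $\bP_b$, strong Markov at $T_0$ gives
\[
\bP_b\sbra{L_{T_a}} = \bP_b(T_0<T_a)\cdot h^B(a).
\]
Plugging in the explicit formulas
\[
\bP(T_b<T_a) = \frac{h(a-b)+h(-a)-h(-b)}{h^B(a-b)}, \qquad
\bP_b(T_0<T_a) = \frac{h(a)+h(b-a)-h(b)}{h^B(a)}
\]
from~\eqref{eq:two-hittig-time-prob} (with appropriate choices of $x$, and noting that $h^B$ is even), one obtains
\[
h^C(a,b)\cdot h^B(a-b) = h^B(a)\,h^B(a-b) - \bigl(h(a-b)+h(-a)-h(-b)\bigr)\bigl(h(a)+h(b-a)-h(b)\bigr).
\]

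The remaining step is a pure algebraic manipulation: expand the two products using $h^B(a)=h(a)+h(-a)$ and $h^B(a-b)=h(a-b)+h(b-a)$, then cancel and regroup to recognize the claimed symmetric form $(h(b)+h(-a))h(a-b)+(h(a)+h(-b))h(b-a)-h(a-b)h(b-a)$. This bookkeeping across the six quantities $h(\pm a)$, $h(\pm b)$, $h(\pm(a-b))$ is the main source of potential error and is the principal obstacle in the proof. An alternative, perhaps cleaner route is to work at the $q$-level first, writing
\[
h^C_q(a,b) = r_q(0) - r_q(-a)\,\bP\sbra{\ce^{-qT_a};\,T_a<T_b} - r_q(-b)\,\bP\sbra{\ce^{-qT_b};\,T_b<T_a}
\]
via strong Markov at $T_a\wedge T_b$, substituting the formula~\eqref{eq:qT_a;T_a<T_b}, using $r_q(-x)=r_q(0)-h_q(x)$ to produce systematic cancellations, and only then passing to the limit $q\to 0+$ via Lemma~\ref{Lem:inte-of-varPsi}\,(\ref{Lem-item:qr_q}); the divergent $r_q(0)$ contributions cancel and one is left with the same algebraic identity.
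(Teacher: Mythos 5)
Your strong-Markov decomposition of \(L_{T_a\wedge T_b}\) on \(\{T_a<T_b\}\) versus \(\{T_b<T_a\}\), with a second application of the strong Markov property at \(T_0\) under \(\bP_b\), is sound and is a genuinely different route from the paper's: the paper applies the strong Markov property at \(T_a\wedge T_b\) to the \(q\)-potential \(\bP\sbra{\int_0^\infty\ce^{-qt}\,\cd L_t}=r_q(0)\) and then lets \(q\to0+\), whereas you reduce everything to the limiting quantities \(h\), \(h^B\) and the two-point exit probabilities directly, avoiding the \(q\)-level bookkeeping.

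The genuine gap is the step you flag as ``pure algebraic manipulation'' and postpone: it does \emph{not} reproduce the displayed formula. Expanding
\begin{align}
  h^B(a)\,h^B(a-b)-\bigl(h(a-b)+h(-a)-h(-b)\bigr)\bigl(h(a)+h(b-a)-h(b)\bigr)
\end{align}
and cancelling yields the lemma's right-hand side \emph{plus} the extra summand \(\bigl(h(-b)-h(-a)\bigr)\bigl(h(a)-h(b)\bigr)\), which does not vanish for asymmetric processes. And the extra term is in fact needed: for standard Brownian motion (\(h(x)=\abs{x}\)) with \(a>0>b\), your intermediate identity gives \(h^C(a,b)=h^B(a)\bigl(1-\bP(T_b<T_a)\,\bP_b(T_0<T_a)\bigr)=2a\cdot\tfrac{\abs{b}}{a+\abs{b}}=\tfrac{2a\abs{b}}{a+\abs{b}}\), which agrees with the direct martingale computation \(\bP\sbra{L_{T_a\wedge T_b}}=\bP\sbra{\abs{B_{T_a\wedge T_b}}}\) and with \(1/n(T_a\wedge T_b<T_0)=1/\bigl(\tfrac{1}{2a}+\tfrac{1}{2\abs{b}}\bigr)\); the lemma's displayed formula instead evaluates to \(\tfrac{a+\abs{b}}{2}\), and the two coincide only when \(a=\abs{b}\). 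So the statement as printed is missing the summand \(\bigl(h(-b)-h(-a)\bigr)\bigl(h(a)-h(b)\bigr)\) inside the braces, and honestly carrying out the bookkeeping you skipped is precisely what catches this. The same extra term appears (without any \(1/r_q(0)\) factor, and so survives the limit) along your alternative \(q\)-level route as well.
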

\begin{proof}
  For \(q>0\), by the strong Markov property, we have
  \begin{align}\label{eq:L_T_aT_b}
    \bP\sbra*{\int_0^\infty \ce^{-qt}\, \cd L_t}
    =\begin{aligned}[t]
       & \bP\sbra*{\int_0^{T_a\wedge T_b}\ce^{-qt}\, \cd L_t }
      \\
       & + \bP\sbra{\ce^{-qT_a}; T_a<T_b}\bP_a\sbra*{\int_0^\infty \ce^{-qt}\,\cd L_t
      }
      \\
       & + \bP\sbra{\ce^{-qT_b}; T_b<T_a}\bP_b\sbra*{\int_0^\infty \ce^{-qt}\,\cd L_t
      }.
    \end{aligned}
  \end{align}
  Using~\eqref{eq:regularity-of-L} and
  (\ref{Lem-item:two-hittig-time-prob}) of Lemma~\ref{Lem:h^B-hitting-h}
  and letting \(q\to 0+\),
  we obtain the desired result.
\end{proof}

\begin{Lem}\label{Lem:three-hitting-time}
  For \(a,b,c,x\in \bR\) and \(a\ne b, c\), it holds that
  \begin{align}\label{eq:e-three-hitting-time}
   \hspace{-25pt} \bP_x\sbra{\ce^{-qT_a}; T_a<T_b\wedge T_c}
    = \frac{\bP_x\sbra{\ce^{-qT_a}; T_a<T_b}-\bP_x\sbra{\ce^{-qT_c}; T_c< T_b}
      -\bP_c\sbra{\ce^{-qT_a}; T_a< T_b} }
    {1-\bP_a\sbra{\ce^{-qT_c};T_c<T_b}\bP_c\sbra{\ce^{-qT_a}; T_a<T_b}},
  \end{align}
  and letting \(q\to 0+\), it holds that
  \begin{align}\label{eq:three-hitting-time}
    \bP_x(T_a<T_b\wedge T_c)
    = \frac{\bP_x(T_a<T_b)-\bP_x(T_c<T_b)\bP_c(T_a<T_b)}
    {1-\bP_a(T_c<T_b)\bP_c(T_a<T_b)}.
  \end{align}
\end{Lem}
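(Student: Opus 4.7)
The plan is to mimic the derivation of Lemma~\ref{Lem:h^B-hitting-h}\,(\ref{Lem-item:two-hittig-time-prob}), namely set up a \(2\times 2\) linear system from two strong-Markov decompositions and solve it. Let
\[
u = \bP_x\sbra{\ce^{-qT_a}; T_a<T_b\wedge T_c}, \qquad
v = \bP_x\sbra{\ce^{-qT_c}; T_c<T_a\wedge T_b}.
\]
On the event \(\cbra{T_a<T_b}\) either \(T_a<T_c\) (giving \(u\)) or \(T_c<T_a<T_b\); in the latter sub-case the strong Markov property at \(T_c\) restarts the process from \(c\) and the subsequent condition becomes \(T_a<T_b\). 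Hence
\[
\bP_x\sbra{\ce^{-qT_a}; T_a<T_b}
= u + v\,\bP_c\sbra{\ce^{-qT_a}; T_a<T_b}.
\]
Interchanging the roles of \(a\) and \(c\) gives the companion identity
\[
\bP_x\sbra{\ce^{-qT_c}; T_c<T_b}
= v + u\,\bP_a\sbra{\ce^{-qT_c}; T_c<T_b}.
\]

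The next step is to solve this linear system for \(u\). Eliminating \(v\) by substitution, the coefficient of \(u\) becomes \(1-\bP_a\sbra{\ce^{-qT_c}; T_c<T_b}\bP_c\sbra{\ce^{-qT_a}; T_a<T_b}\). Since each factor in this product is bounded by \(\bP_a\sbra{\ce^{-qT_c}}\bP_c\sbra{\ce^{-qT_a}}=h_q(c-a)h_q(a-c)/r_q(0)^2<1\) whenever \(q>0\), the determinant is strictly positive and we may divide to obtain the formula~\eqref{eq:e-three-hitting-time}.

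For~\eqref{eq:three-hitting-time} I would simply let \(q\to 0+\). Each term \(\bP_y\sbra{\ce^{-qT_z}; T_z<T_w}\) converges monotonically to \(\bP_y(T_z<T_w)\) by monotone convergence (the integrand \(\ce^{-qT_z}1_{\cbra{T_z<T_w}}\) increases to \(1_{\cbra{T_z<T_w}}\) as \(q\to 0+\)), and, crucially, recurrence implies \(\bP_y(T_z<\infty)=1\) for all \(y,z\), so these limits really equal the hitting probabilities \(\bP_y(T_z<T_w)\). The denominator of~\eqref{eq:e-three-hitting-time} therefore tends to \(1-\bP_a(T_c<T_b)\bP_c(T_a<T_b)\), which remains strictly positive because \(\cbra{T_c<T_b}\) and \(\cbra{T_a<T_b}\) cannot both have probability one when started from \(a\) and \(c\) respectively.

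The only real subtlety is the strong Markov decomposition itself: one has to verify that, on the sub-event \(\cbra{T_c<T_a<T_b}\), the post-\(T_c\) path \((X_{T_c+s})_{s\ge 0}\) hits \(a\) before \(b\) if and only if the original path does, which follows because up to time \(T_c\) the original path has visited neither \(a\) nor \(b\). Once this is granted the rest is bookkeeping, and the positivity of the determinant (needed to divide) is handled as above.
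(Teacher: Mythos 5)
Your proposal matches the paper's proof almost verbatim: the paper sets up exactly the same two strong-Markov decompositions (writing $\bP_x[\ce^{-qT_a};T_a<T_b]$ as the sum of the contribution from $\{T_a<T_b\wedge T_c\}$ and the contribution from $\{T_c<T_a<T_b\}$, then swapping $a$ and $c$) and solves the resulting $2\times 2$ linear system for $u$ before letting $q\to 0+$. So the approach is the same, not merely similar.

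Two small remarks. First, your justification of the determinant's positivity has a notational slip: $\bP_a[\ce^{-qT_c}]\,\bP_c[\ce^{-qT_a}]$ equals $r_q(c-a)\,r_q(a-c)/r_q(0)^2$, not $h_q(c-a)\,h_q(a-c)/r_q(0)^2$ (recall $\bP_y[\ce^{-qT_0}]=r_q(-y)/r_q(0)$, and $h_q(y)=r_q(0)-r_q(-y)$ is the complement). The conclusion that the product is $<1$ for $q>0$ and $a\neq c$ is nevertheless correct. Second, you may note that the displayed form of \eqref{eq:e-three-hitting-time} in the statement contains a typo — the numerator should read $\bP_x[\ce^{-qT_a};T_a<T_b]-\bP_x[\ce^{-qT_c};T_c<T_b]\,\bP_c[\ce^{-qT_a};T_a<T_b]$, with a product rather than a difference between the last two terms, which is precisely what your elimination produces and what the limit identity \eqref{eq:three-hitting-time} confirms.
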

Note that, by~\eqref{eq:two-hittig-time-prob},
we can express
\(\bP_x(T_a<T_b\wedge T_c)\) only in terms of \(h\).
\begin{proof}[Proof of Lemma~\ref{Lem:three-hitting-time}]
  Using the strong Markov property, we have
  \begin{align}\label{eq:pf-three-hitting-times}
    \begin{aligned}
     & \bP_x\sbra{\ce^{-qT_a}; T_a<T_b}    \\
     & =
    \bP_x\sbra{\ce^{-qT_a};T_a<T_b\wedge T_c}
    + \bP_x\sbra{\ce^{-qT_a}; T_c<T_a<T_b} \\
     & =
    \bP_x\sbra{\ce^{-qT_a};T_a<T_b\wedge T_c}
    + \bP_x\sbra{\ce^{-qT_c}; T_c<T_a\wedge T_b}
    \bP_c\sbra{\ce^{-qT_a}; T_a<T_b}.
    \end{aligned}
  \end{align}
  Replacing \(a\) with \(c\), we also have
  \begin{align}\label{eq:pf-three-hitting-times2}
    \begin{aligned}
       & \bP_x\sbra{\ce^{-qT_c}; T_c<T_b} \\
       & =
      \bP_x\sbra{\ce^{-qT_c};T_c<T_a\wedge T_b}
      + \bP_x\sbra{\ce^{-qT_a}; T_a<T_b\wedge T_c}
      \bP_a\sbra{\ce^{-qT_c}; T_c<T_b}.
    \end{aligned}
  \end{align}
  Combining~\eqref{eq:pf-three-hitting-times} and~\eqref{eq:pf-three-hitting-times2},
  we obtain~\eqref{eq:e-three-hitting-time}.
  Letting \(q\to 0+\), we also have~\eqref{eq:three-hitting-time}.
\end{proof}

\begin{Lem}\label{Lem:two-hitting-time-f-L}
  For \(a \ne b\) and \(x\in \bR\), it holds that
  \begin{align}
    \bP_x\sbra{f(L_{T_a\wedge T_b})}
    =
    \bP_x(T_0 > T_a\wedge T_b) f(0)
    + \frac{\bP_x(T_0 < T_a\wedge T_b)}{h^C(a,b)}
    \int_0^\infty \ce^{-u/h^C(a,b)}f(u)
    \, \cd u.
  \end{align}
\end{Lem}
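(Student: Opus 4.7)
The argument runs in complete parallel with the proof of Lemma~\ref{Lem:expect-L_T_a}, only with the singleton \(\cbra{a}\) replaced by the two-point set \(\cbra{a,b}\).

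First, I would establish that under \(\bP_0\), the random variable \(L_{T_a\wedge T_b}\) is exponentially distributed with mean \(h^C(a,b)\). As in the derivation of~\eqref{eq:L_T_a-exp-distrib}, for any \(l>0\) the event \(\cbra{L_{T_a\wedge T_b}>l}\) coincides with \(\cbra{\sigma_A>l}\) where \(A=\cbra{T_a\wedge T_b<T_0}\subset\cD\) is the set of excursions that hit \(\cbra{a,b}\) before returning to \(0\). Since \(\sigma_A\) is the first entry time of \(A\) for the Poisson point process \(((l,e_l),l\ge 0)\) with intensity \(n\), this yields
\begin{align}
\bP_0(L_{T_a\wedge T_b}>l) = \ce^{-l\,n(T_a\wedge T_b<T_0)}.
\end{align}
Taking expectations and comparing with Lemma~\ref{Lem:LT-two-hitting}, which gives \(\bP\sbra{L_{T_a\wedge T_b}}=h^C(a,b)\), forces \(n(T_a\wedge T_b<T_0)=1/h^C(a,b)\). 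Consequently,
\begin{align}
\bP_0\sbra{f(L_{T_a\wedge T_b})} = \frac{1}{h^C(a,b)}\int_0^\infty \ce^{-u/h^C(a,b)} f(u)\,\cd u.
\end{align}

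Second, for a general starting point \(x\in\bR\), I would decompose according to whether \(\cbra{a,b}\) is visited before \(0\) or not. On the event \(\cbra{T_a\wedge T_b<T_0}\) one has \(L_{T_a\wedge T_b}=0\) since \(L\) accumulates only at \(0\), so \(f(L_{T_a\wedge T_b})=f(0)\). On the complementary event \(\cbra{T_0<T_a\wedge T_b}\) the continuity of \(L\) together with the fact that \(X\) has not yet visited \(0\) give \(L_{T_0}=0\); applying the strong Markov property at \(T_0\) then reduces the conditional law of \(L_{T_a\wedge T_b}\) to its law under \(\bP_0\), which was computed in the first step. Summing the two contributions produces the stated identity.

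The only substantive step is the excursion-theoretic identification of the distribution of \(L_{T_a\wedge T_b}\) under \(\bP_0\); the remainder is a routine strong Markov decomposition. I do not anticipate a genuine obstacle here, since the hitting time of the two-point set \(\cbra{a,b}\) plays exactly the same role in the Poisson point process argument as the single-point hitting time \(T_a\) in Lemma~\ref{Lem:expect-L_T_a}, so the earlier proof transfers almost verbatim.
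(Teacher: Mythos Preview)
Your proposal is correct and follows essentially the same argument as the paper's own proof: both identify the law of \(L_{T_a\wedge T_b}\) under \(\bP_0\) as exponential via the excursion/Poisson point process argument of~\eqref{eq:L_T_a-exp-distrib}, read off the parameter \(1/h^C(a,b)\) from the mean computed in Lemma~\ref{Lem:LT-two-hitting}, and then pass to general \(x\) by the strong Markov property at \(T_0\). The paper compresses these steps into a few lines, but the content is identical.
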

\begin{proof}
  In the same way as~\eqref{eq:L_T_a-exp-distrib},
  we have, for \(l>0\),
  \begin{align}
    \bP(L_{T_a\wedge T_b}>l)=
    \bP(\sigma_{\cbra{T_a\wedge T_b <T_0}}>l)
    = \ce^{-tn(T_a\wedge T_b <T_0)}.
  \end{align}
  In particular,
  \(L_{T_a\wedge T_b}\) is exponentially distributed.
  (Consequently, we obtain \(n(T_a\wedge T_b<T_0)=1/\bP\sbra{L_{T_a,b}}=1/h^C(a,b)\).)
  We may apply the strong Markov property and obtain the desired result.
\end{proof}

\subsection{Proof of
  Theorem~\ref{Thm:two-point-hitting-time-result}}\label{Subsec:pf-two-hitting}
We are now ready to prove the two-point hitting time result.
Recall that \((a,b)\xrightarrow[]{\gamma}\infty\) means~\eqref{eq:a-b-inf-notation}.

\begin{proof}[Proof of Theorem~\ref{Thm:two-point-hitting-time-result}]
  By Lemmas~\ref{Lem:LT-two-hitting} and~\ref{Lem:three-hitting-time}
  and~\eqref{eq:two-hittig-time-prob}, we have
  \begin{align}
     & h^C(a,b)\bP_x(T_0>T_a\wedge T_b)
    \\
     & =h(x) + \frac{1}{h^B(a-b)}
    \cbra*{\begin{multlined}
        \rbra[\big]{h(-a)-h(x-a)}h(a-b)
        + \rbra[\big]{h(-b)-h(x-b)}h(b-a)\\
        -\rbra[\big]{h(a)-h(b)}\rbra[\big]{h(-a)-h(x-a)-h(-b)+h(x-b)}
      \end{multlined}}.
  \end{align}
  Recall that \(h^B(a)=h(a)+h(-a)\); see~\eqref{eq:h^B}.
  Replacing \(b\) with \(-b\) and
  using Theorem~\ref{Thm:property-h}, it holds that
  \begin{align}
    h^C(a, -b) \bP_x(T_0>T_{a}\wedge T_{-b})
    \xrightarrow[(a,b)\xrightarrow{\gamma}\infty]{}
    h^{(\gamma)}(x).
  \end{align}
  By the strong Markov property and Lemma~\ref{Lem:two-hitting-time-f-L},
  we have
  \begin{align}
    N_t^{a,b} & = 1_{\cbra{t<T_{a,-b}}}
    \cbra*{h^C(a,-b)\bP_{X_t}(T_0>T_{a,-b})f(L_t)} \\
              & \quad +1_{\cbra{t<T_{a,-b}}}
    \cbra*{\bP_{X_t}(T_0<T_{a,-b})\int_0^\infty \ce^{-u/h^C(a,-b)} f(L_t+u)\, \cd y}.
  \end{align}
  Hence we obtain
  \(N_t^{a,b}\to M_t^{(\gamma)}\),
  \(\bP_x\)-a.s.\ as \((a,b)\xrightarrow[]{\gamma}\infty\).
  The proof of \(M_t^{a,b}\to M_t^{(\gamma)}\),
  \(\bP_x\)-a.s.\
  is similar to~\eqref{eq:hitting-A}--\eqref{eq:hitting-Al}.
  Since \((M_t^{(\gamma)},t\ge 0)\) is
  a non-negative \(((\cF_t),\bP_x)\)-martingale (Theorem~\ref{Thm:martingale}),
  we may apply Scheff\'{e}'s lemma to obtain
  the \(\cL^1\) convergence.
\end{proof}

\section{Local time penalization with inverse local time clock}\label{Sec:inv-time}
We define the modified Bessel function of the first kind, which
is expressed as
\begin{align}
  I_{\nu}(x) = \sum_{n=0}^\infty \frac{{(x/2)}^{\nu+2n}}{n! \Gamma(\nu+n+1)},
  \quad \nu \ge 0, \,\, x > 0.
\end{align}
Note that \(I_{\nu}(x)\) is increasing in \(x>0\).
For more details, see e.g.,~\cite[Section 5]{MR0350075}.
Recall that \(\eta_u^a\) denotes the inverse local time at \(a\):
\(
\eta_u^a = \inf\cbra{t\ge 0\colon L_t^a > u}
\).
We consider the penalization with inverse local time clock
in two ways:
first, we make
\(a\) tend to infinity, and second, \(u\) tend to infinity.
Recall that we assume \(X\) is recurrent
and assume the condition~\ref{item:assumption}.

\subsection{The law of the local time with inverse local time clock}
\begin{Lem}
  Let \(a \in \bR\setminus\cbra{0}\).
  Then the process \((L_{\eta^a_u},u\ge 0)\) under \(\bP_a\)
  is a compound Poisson process with Laplace transform
  \begin{align}\label{eq:L_eta^a-laplace}
    \bP_a\sbra{\ce^{-\beta L_{\eta^a_u}}}
    = \ce^{-u\beta/(1+\beta h^B(a))}, \quad \beta\ge 0.
  \end{align}
  Moreover, for any \(u>0\) and \(f\in\cL^1_{+}\),
  it holds that
  \begin{align}
    \bP_a\sbra{f(L_{\eta^a_u})}
    = \ce^{-u/h^B(a)} f(0)
    + \int_0^\infty f(y)\rho^{h^B(a)}_u(y)
    \, \cd y,
  \end{align}
  where
  \begin{align}
    \rho^a_u(y) = \ce^{-(u+y)/a}\frac{\sqrt{u/y}}{a}
    I_1\rbra*{\frac{2\sqrt{uy}}{a}}.
  \end{align}
\end{Lem}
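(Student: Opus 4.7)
The plan is to decompose \(L_{\eta^a_u}\) via Itô's excursion theory at the point \(a\), derive its Laplace transform from the exponential formula, and then invert that Laplace transform to obtain the density.

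First, I would work under \(\bP_a\) and consider the Poisson point process \((e^a_l)_{l\in D^a}\) of excursions of \(X\) away from \(a\), with characteristic measure \(n^a\). Write \(\phi(e)\) for the amount of local time at \(0\) accumulated along an excursion \(e\) during its lifetime \(\zeta\). Since \(a\ne 0\), the local time \(L\) does not increase on the closed set \(\{t\colon X_t=a\}\), so
\[
L_{\eta^a_u}=\sum_{l\in D^a,\,l\le u}\phi(e^a_l).
\]
The exponential formula for Poisson point processes then yields
\[
\bP_a\sbra{\ce^{-\beta L_{\eta^a_u}}}=\exp\rbra*{-u\,n^a\sbra{1-\ce^{-\beta\phi}}},
\]
and in particular \((L_{\eta^a_u})_{u\ge 0}\) is a subordinator.

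Second, I would compute \(n^a\sbra{1-\ce^{-\beta\phi}}\). Since \(\phi=0\) on \(\{T_0\ge \zeta\}\), and since on \(\{T_0<\zeta\}\) the strong Markov property applied at \(T_0\) inside the excursion measure gives that the post-\(T_0\) portion of the excursion has the law of \(X\) under \(\bP_0\) killed at \(T_a\), we obtain
\[
n^a\sbra{1-\ce^{-\beta\phi}}=n^a(T_0<\zeta)\,\rbra*{1-\bP_0\sbra{\ce^{-\beta L_{T_a}}}}.
\]
By translation invariance (the law of \(X-a\) under \(\bP_a\) agrees with that of \(X\) under \(\bP_0\)), Theorem~\ref{Thm:n-two-hitting-time} gives \(n^a(T_0<\zeta)=1/h^B(-a)=1/h^B(a)\), where the last equality uses the symmetry \(h^B(x)=h(x)+h(-x)\) recorded in Lemma~\ref{Lem:h^B-hitting-h}. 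Moreover, \eqref{eq:L_T_a-exp-distrib} combined with Theorem~\ref{Thm:n-two-hitting-time} shows that \(L_{T_a}\) under \(\bP_0\) is exponentially distributed with rate \(1/h^B(a)\), so \(\bP_0\sbra{\ce^{-\beta L_{T_a}}}=1/(1+\beta h^B(a))\). A short algebraic simplification then yields \(n^a\sbra{1-\ce^{-\beta\phi}}=\beta/(1+\beta h^B(a))\), which is precisely \eqref{eq:L_eta^a-laplace}. Since the total jump intensity is the finite number \(1/h^B(a)\) with exponential jump distribution of rate \(1/h^B(a)\), the subordinator is in fact compound Poisson.

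Third, I would invert the Laplace transform. From the previous step \(L_{\eta^a_u}\) has the law of \(\sum_{i=1}^{N_u}Y_i\), where \(N_u\) is Poisson with mean \(u/h^B(a)\) and \((Y_i)\) are i.i.d.\ exponentials with rate \(1/h^B(a)\), independent of \(N_u\). The atom at \(0\) has mass \(\bP_a(N_u=0)=\ce^{-u/h^B(a)}\). For \(y>0\), summing the Gamma-\((n,1/h^B(a))\) densities of \(\sum_{i=1}^n Y_i\) weighted by the Poisson probabilities, and then reindexing, produces a power series in \(uy/h^B(a)^2\) whose coefficients are exactly those of the series for \(I_1\) given in the lemma; this gives the density \(\rho^{h^B(a)}_u(y)\). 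The main obstacle is the careful application of the strong Markov property at the random time \(T_0\) under the non-probability measure \(n^a\), together with the translation step required to invoke Theorem~\ref{Thm:n-two-hitting-time}, which was only stated for excursions away from \(0\); once these two points are justified, the rest is a routine compound-Poisson density computation and recognition of the Bessel series.
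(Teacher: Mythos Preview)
Your proposal is correct and follows essentially the same route the paper indicates (the paper omits the proof, referring to \cite[Lemma 4.1]{MR3909919} and \cite[Lemma V.13]{MR1406564}, and recording precisely the key identity \(n^a(T_0<T_a)=n(T_{-a}<T_0)=1/h^B(a)\) that you derive via translation and Theorem~\ref{Thm:n-two-hitting-time}). Your two flagged technical points---the strong Markov property under \(n^a\) and the translation from \(n^a\) to \(n\)---are exactly the ingredients the paper singles out, and both are standard; the Bessel-series identification of the density is then routine.
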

We omit the proof because it is very similar to that in the
diffusion case of Profeta--Yano--Yano~\cite[Lemma 4.1]{MR3909919}, using
Theorem~\ref{Thm:n-two-hitting-time}.
For the proof, we use
\(n^a(T_0<T_a) = n(T_{-a}<T_0) = 1/h^B(a)\)
in the L\'{e}vy case, instead of
\(n^a(T_0<T_a) = n(T_a<T_0) = 1/a\)
in the diffusion case.
The proof of~\eqref{eq:L_eta^a-laplace} can also be found
in~\cite[Lemma V.13]{MR1406564}.

The proof of the next lemma is completely parallel
to that of~\cite[Lemma 4.2]{MR3909919}.
So we omit it.
\begin{Lem}
  For \(u > 0, \; x,a \in\bR\), it holds that
  \begin{align}
    \bP_x\sbra{f(L_{\eta^a_u})}
     & = \bP_x(T_a<T_0)\bP_a\sbra{f(L_{\eta^a_u})}
    + \bP_x(T_0<T_a)\bP_a\sbra{f(\bm{e}_{1/h^B(a)}+L_{\eta^a_u})} \\
     & = \bP_x(T_a<T_0)\bP_a\sbra{f(L_{\eta^a_u})}
    + \frac{\bP_x(T_0<T_a)}{h^B(a)}
    \int_0^\infty f(y)\widetilde{\rho}^{h^B(a)}_u(y)
    \, \cd y,
  \end{align}
  where
  \begin{align}
    \widetilde{\rho}^a_u(y)
    = \ce^{-(u+y)/a}I_0\rbra*{\frac{2\sqrt{uy}}{a}}.
  \end{align}
\end{Lem}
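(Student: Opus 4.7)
The strategy is to decompose $\bP_x\sbra{f(L_{\eta^a_u})}$ according to which of $T_a$ and $T_0$ occurs first, then apply the strong Markov property to reduce both cases to expectations under $\bP_a$; the key input is that $L_{T_a}$ under $\bP_0$ is exponentially distributed with mean $h^B(a)$, by~\eqref{eq:L_T_a-exp-distrib}. The second equality will then follow by a density computation that pivots on the Bessel identity $I_0'=I_1$.

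Concretely, I would first split
\begin{align}
  \bP_x\sbra{f(L_{\eta^a_u})}
  = \bP_x\sbra{f(L_{\eta^a_u}); T_a<T_0}
  + \bP_x\sbra{f(L_{\eta^a_u}); T_0<T_a}.
\end{align}
On $\{T_a<T_0\}$ one has $L_{T_a}=0$, since $L$ grows only on the zero set of $X$, so the strong Markov property at $T_a$ gives the first term as $\bP_x(T_a<T_0)\,\bP_a\sbra{f(L_{\eta^a_u})}$. On $\{T_0<T_a\}$ one similarly has $L_{T_0}=0$, and the strong Markov property at $T_0$ reduces the second term to $\bP_x(T_0<T_a)\,\bP_0\sbra{f(L_{\eta^a_u})}$. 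I would then apply the strong Markov property once more at $T_a$ under $\bP_0$: writing $L_{\eta^a_u}=L_{T_a}+(L_{\eta^a_u}-L_{T_a})$, the two summands are independent, the increment has the law of $L_{\eta^a_u}$ under $\bP_a$ (since $L^a_{T_a}=0$ under $\bP_0$, so the inverse-local-time clock restarts cleanly at $T_a$), and $L_{T_a}$ itself is $\mathrm{Exp}(1/h^B(a))$-distributed by~\eqref{eq:L_T_a-exp-distrib}. Combining these observations yields the first equality.

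For the second equality I would substitute the preceding lemma's explicit law of $L_{\eta^a_u}$ under $\bP_a$, namely the mixture of the point mass $\ce^{-u/h^B(a)}\delta_0$ and the density $\rho^{h^B(a)}_u$, and convolve with the exponential density of $\bm{e}_{1/h^B(a)}$. Writing $c=h^B(a)$, the point-mass part contributes $\frac{1}{c}\ce^{-(u+z)/c}$ at $z$, while the absolutely continuous part contributes $\frac{1}{c}\int_0^z \rho^c_u(y)\,\ce^{-(z-y)/c}\,\cd y$. Factoring out $\frac{1}{c}\ce^{-(u+z)/c}$, the bracketed integrand at $z$ becomes
\begin{align}
  1+\frac{1}{c}\int_0^z \sqrt{u/y}\,I_1\rbra*{\frac{2\sqrt{uy}}{c}}\,\cd y,
\end{align}
and the substitution $\xi=2\sqrt{uy}/c$ together with $I_0(0)=1$ and $I_0'=I_1$ collapses this to $I_0(2\sqrt{uz}/c)$, which is exactly the formula for $\widetilde{\rho}^c_u(z)$ modulo the prefactor.

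The only step requiring care is the Bessel convolution at the end; the probabilistic ingredients are standard strong Markov arguments together with the exponential distribution of $L_{T_a}$ recorded earlier in the paper, so no new techniques are needed. This mirrors the argument of~\cite[Lemma 4.2]{MR3909919}, with the L\'evy-case identity $n^a(T_0<T_a)=1/h^B(a)$ (from Theorem~\ref{Thm:n-two-hitting-time}) playing the role of $1/a$ in the diffusion setting.
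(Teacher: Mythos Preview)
Your argument is correct and is precisely the approach the paper has in mind: the paper omits the proof, noting only that it is ``completely parallel'' to \cite[Lemma~4.2]{MR3909919}, and your decomposition via the strong Markov property at $T_a\wedge T_0$, together with the exponential law of $L_{T_a}$ under $\bP_0$ and the Bessel convolution $\int_0^\xi I_1=I_0(\xi)-1$, is exactly that argument transported to the L\'evy setting with $h^B(a)$ in place of $a$.
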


\subsection{Limit as \(a\) tends to infinity with \(u\) being fixed}
\begin{Thm}\label{Thm:inv-LT-result}
  Let \(f\in \cL^1_{+}\) and \(x\in\bR\).
  For any \(u> 0\) and \(a\in \bR\), we define
  \begin{align}
    N_t^{a,u}
     & = h^B(a)\bP_x\sbra{f(L_{\eta^a_u}); t<\eta^a_u|\cF_t}, \\
    M_t^{a,u}
     & = h^B(a)\bP_x\sbra{f(L_{\eta^a_u})|\cF_t}.
  \end{align}
  Then
  \begin{align}
    \lim_{a\to\pm\infty}N_t^{a,u}
    = \lim_{a\to\pm\infty}M_t^{a,u}
    = M_t^{(\pm 1)},\quad
    \text{\(\bP_x\)-a.s.\ and in \(\cL^1(\bP_x)\).}
  \end{align}
  Consequently, if \(M_0^{(\pm 1)}>0\) under \(\bP_x\),
  it holds that
  \begin{align}
    \frac{\bP_x\sbra{F_t f(L_{\eta^a_u})}}{\bP_x\sbra{f(L_{\eta^a_u})}}
    \longrightarrow \bP_x\sbra*{F_t \frac{M_t^{(\pm 1)}}{M_0^{(\pm 1)}}},
    \qquad \text{as \(a \to \pm\infty \),}
  \end{align}
  for all bounded \(\cF_t\)-measurable functionals \(F_t\).
\end{Thm}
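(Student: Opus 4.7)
The plan is to mirror the proof of the hitting time clock (Theorem~\ref{Thm:hitting-time-result}), with $T_a$ replaced by the inverse local time $\eta^a_u$. First, I would apply the strong Markov property at time $t$: on $\{t<\eta^a_u\}$, additivity of the local time at $0$ gives $L_{\eta^a_u} = L_t + \widetilde{L}_{\widetilde{\eta}^a_{u-L^a_t}}$ for an independent copy $\widetilde{X}$ started at $X_t$, noting that $L^a_t$ is $\cF_t$-measurable. Combining this with the second lemma of this section (applied to $\widetilde{X}$ with shifted $f$ and parameter $u-L^a_t$) yields
\begin{align}
N_t^{a,u} = 1_{\{t<\eta^a_u\}}\Bigl\{&h^B(a)\bP_{X_t}(T_a<T_0)\,\bP_a\bigl[f(L_t + L_{\eta^a_{u-L^a_t}})\bigr] \\
&+ \bP_{X_t}(T_0<T_a) \int_0^\infty f(L_t+y)\,\widetilde{\rho}^{h^B(a)}_{u-L^a_t}(y)\,\cd y\Bigr\}.
\end{align}

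Next, I would pass to the limit $a\to\pm\infty$ term by term. Four items require control. (i) $1_{\{t<\eta^a_u\}}\to 1$ $\bP_x$-a.s., since $T_a\to\infty$ a.s.\ forces $L^a_t=0$ eventually, hence $\eta^a_u=+\infty$ eventually. (ii) By Lemma~\ref{Lem:h^B-hitting-h}(ii) with $b=0$, $h^B(a)\bP_{X_t}(T_a<T_0)=h(-a)+h(X_t)-h(X_t-a)$, which converges to $h^{(\pm 1)}(X_t)$ by Theorem~\ref{Thm:property-h}(ii) and the definition~\eqref{eq:def-h-gamma}, exactly as in the proof of Theorem~\ref{Thm:hitting-time-result}. (iii) Since $h^B(a)\to\infty$ by Lemma~\ref{Thm:h^B-infty}, the Laplace transform~\eqref{eq:L_eta^a-laplace} tends to $1$, so the compound Poisson law of $L_{\eta^a_v}$ under $\bP_a$ concentrates at $0$; using the explicit density $\rho^c_v$ together with the elementary estimate $I_1(x)\le (x/2)\ce^{x}$ yields $\int_0^\infty f(L_t+y)\,\rho^{h^B(a)}_{u-L^a_t}(y)\,\cd y=O\bigl(1/h^B(a)^2\bigr)$, whence $\bP_a[f(L_t+L_{\eta^a_v})]\to f(L_t)$. (iv) The kernel $\widetilde{\rho}^c_v(y)=\ce^{-(v+y)/c}I_0(2\sqrt{vy}/c)$ satisfies $\widetilde{\rho}^c_v(y)\le\ce^{-(\sqrt{v}-\sqrt{y})^2/c}\le 1$ by $I_0(x)\le\ce^{x}$ and tends pointwise to $1$ as $c\to\infty$, so dominated convergence gives $\int_0^\infty f(L_t+y)\,\widetilde{\rho}^{h^B(a)}_{u-L^a_t}(y)\,\cd y\to\int_0^\infty f(L_t+y)\,\cd y$. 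Combining (i)--(iv) gives $N_t^{a,u}\to M_t^{(\pm 1)}$ $\bP_x$-a.s.

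The difference $M_t^{a,u}-N_t^{a,u}=h^B(a)f(L_{\eta^a_u})1_{\{\eta^a_u\le t\}}$ vanishes $\bP_x$-a.s.\ since $\eta^a_u\to\infty$ forces the indicator to be $0$ eventually, by the same argument as in~\eqref{eq:hitting-A}--\eqref{eq:hitting-Al}; hence also $M_t^{a,u}\to M_t^{(\pm 1)}$ $\bP_x$-a.s. For the $\cL^1$ convergence, I would use that $(M_t^{(\pm 1)})$ is a non-negative $((\cF_t),\bP_x)$-martingale by Theorem~\ref{Thm:martingale} and that $(M_t^{a,u})$ is a martingale by construction. Then $\bP_x[M_t^{a,u}]=M_0^{a,u}=h^B(a)\bP_x[f(L_{\eta^a_u})]$ converges to $h^{(\pm 1)}(x)f(0)+\int_0^\infty f(y)\,\cd y=\bP_x[M_t^{(\pm 1)}]$ by the second lemma of this section, and a Fatou sandwich followed by Scheff\'{e}'s lemma, as in the proofs of Theorems~\ref{Thm:exp-L^1-conv} and~\ref{Thm:hitting-time-result}, delivers the $\cL^1$ convergence for both $N_t^{a,u}$ and $M_t^{a,u}$. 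The main obstacle I foresee is steps (iii)--(iv): one must establish sharp enough uniform bounds on $\rho^c_v$ and $\widetilde{\rho}^c_v$ via elementary estimates on $I_0$ and $I_1$ to safely interchange $a\to\pm\infty$ with the integrals in $y$.
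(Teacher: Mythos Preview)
Your proposal is correct and follows precisely the approach the paper intends: the paper omits the proof of Theorem~\ref{Thm:inv-LT-result} and refers to \cite[Lemma~4.4 and Theorem~4.5]{MR3909919}, whose argument is exactly the Markov-property decomposition of $N_t^{a,u}$ via the two lemmas of this section, followed by the same term-by-term limits (i)--(iv) and the Fatou/Scheff\'{e} wrap-up you describe. Your Bessel-function estimates $I_1(x)\le (x/2)I_0(x)\le (x/2)\ce^{x}$ and $I_0(x)\le \ce^{x}$ are valid and yield the needed uniform bounds on $\rho^c_v$ and $\widetilde{\rho}^c_v$; note also that for $|a|$ large enough $T_a>t$ forces $L^a_t=0$, so the parameter $u-L^a_t$ equals $u$ eventually, which removes any subtlety in (iii)--(iv).
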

The proof of the theorem is
very similar to that of~\cite[Lemma 4.4 and Theorem 4.5]{MR3909919}.
So we omit it.
(\cite[Lemma 4.4]{MR3909919} states only convergence
in probability but, its a.s.\ convergence
can also be proved by the same proof.)

\subsection{Limit as \(u\) tends to infinity with \(a\) being fixed}
In this section, we only consider the cases \(f(x) = \ce^{-\beta x}\)
and \(f(x) = 1_{\cbra{x=0}}\).
The next theorem is in the case \(f(x)= \ce^{-\beta x}\).
\begin{Thm}\label{Thm:inv-u-e}
  Let \(x \in \bR\), \(a \in \bR\setminus\cbra{0}\), \(\beta>0\) and \(t>0\).
  Define
  \begin{align}
    N_t^{u,\beta, a} & =
    \ce^{\beta u/(1+\beta h^B(a))}
    \bP_x\sbra{\ce^{-\beta L_{\eta^a_u}}; t<\eta^a_u | \cF_t}, \\
    M_t^{u,\beta, a} & =
    \ce^{\beta u/(1+\beta h^B(a))}
    \bP_x\sbra{\ce^{-\beta L_{\eta^a_u}} | \cF_t},
  \end{align}
  and
  \begin{align}
    M_t^{\beta,a}
    = \ce^{-\beta L_t}
    \cbra*{\bP_{X_t}(T_a<T_0)
      + \frac{\bP_{X_t}(T_0<T_a)}{1+\beta h^B(a)}\ce^{\beta L_t^a/(1+\beta h^B(a))}}.
  \end{align}
  Then it holds that
  \begin{align}
    \lim_{u\to\infty}N_t^{u,\beta,a}
    =\lim_{u\to\infty}M_t^{u,\beta,a}
    = M_t^{\beta,a},\quad
    \text{\(\bP_x\)-a.s.\
      and in \(\cL^1(\bP_x)\).}
  \end{align}
\end{Thm}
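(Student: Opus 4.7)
The strategy is to compute $N_t^{u,\beta,a}$ in closed form via the Markov property at time $t$ combined with the two preceding lemmas, and then pass to the limit $u\to\infty$; the control of $M_t^{u,\beta,a}-N_t^{u,\beta,a}$ follows from recurrence, and the $\cL^1$ convergence is handled by Scheff\'e's lemma using the martingale property in $t$ of $M_t^{u,\beta,a}$.

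First, apply the Markov property at $t$ on the event $\{t<\eta^a_u\}=\{L^a_t<u\}$. Writing $\widetilde X$ for a fresh copy starting at $X_t$, independent of $\cF_t$, we have $\eta^a_u=t+\widetilde\eta^a_{u-L^a_t}$ and $L_{\eta^a_u}=L_t+\widetilde L_{\widetilde\eta^a_{u-L^a_t}}$, so that
\begin{align}
N_t^{u,\beta,a}
=\ce^{\beta u/(1+\beta h^B(a))}\ce^{-\beta L_t}
1_{\{L^a_t<u\}}\,\widetilde\bP_{X_t}\!\sbra*{\ce^{-\beta \widetilde L_{\widetilde\eta^a_{u-L^a_t}}}}.
\end{align}
I would then evaluate the inner Laplace transform by combining the decomposition lemma
\begin{align}
\bP_y\sbra{f(L_{\eta^a_w})}=\bP_y(T_a<T_0)\bP_a\sbra{f(L_{\eta^a_w})}+\bP_y(T_0<T_a)\bP_a\sbra{f(\bm e_{1/h^B(a)}+L_{\eta^a_w})}
\end{align}
with $y=X_t$ and $w=u-L^a_t$, together with the compound Poisson identity $\bP_a[\ce^{-\beta L_{\eta^a_w}}]=\ce^{-w\beta/(1+\beta h^B(a))}$ and the independence of $\bm e_{1/h^B(a)}$ from $L_{\eta^a_w}$. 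This pulls out the factor $\ce^{-(u-L^a_t)\beta/(1+\beta h^B(a))}$ and, after cancellation with the prefactor $\ce^{\beta u/(1+\beta h^B(a))}$, leaves an expression in $L_t,\,L^a_t,\,X_t$ multiplied by $1_{\{L^a_t<u\}}$ that coincides with $M_t^{\beta,a}$ up to this indicator.

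Second, letting $u\to\infty$, recurrence of $X$ gives $L^a_t<\infty$ a.s., so $1_{\{L^a_t<u\}}\to 1$, yielding $N_t^{u,\beta,a}\to M_t^{\beta,a}$, $\bP_x$-a.s. For the difference
\begin{align}
A_t^{u,\beta,a}:=M_t^{u,\beta,a}-N_t^{u,\beta,a}
=\ce^{\beta u/(1+\beta h^B(a))}\ce^{-\beta L_{\eta^a_u}}1_{\{\eta^a_u\le t\}},
\end{align}
the same reasoning shows $\eta^a_u>t$ as soon as $u>L^a_t$, hence $A_t^{u,\beta,a}\to 0$ a.s., and therefore $M_t^{u,\beta,a}\to M_t^{\beta,a}$, $\bP_x$-a.s.

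Third, for the $\cL^1$ convergence I use Scheff\'e's lemma. For each fixed $u$, $(M_t^{u,\beta,a})_{t\ge 0}$ is a non-negative $((\cF_t),\bP_x)$-martingale (being a conditional expectation), so $\bP_x[M_t^{u,\beta,a}]=M_0^{u,\beta,a}$. From the explicit computation in Step~1 specialized to $t=0$, $M_0^{u,\beta,a}$ is independent of $u$ and equals $\bP_x[M_t^{\beta,a}]$ (the latter being constant in $t$ because $M_t^{\beta,a}$ is itself a martingale, a fact one verifies by the same strong Markov computation). Since $M_t^{u,\beta,a}\ge 0$ and converges a.s.\ with constant means, Scheff\'e's lemma delivers $M_t^{u,\beta,a}\to M_t^{\beta,a}$ in $\cL^1(\bP_x)$; the corresponding convergence of $N_t^{u,\beta,a}$ follows from $N_t^{u,\beta,a}=M_t^{u,\beta,a}-A_t^{u,\beta,a}$ together with the a.s.\ bound $0\le A_t^{u,\beta,a}\le M_t^{u,\beta,a}$ and the dominated convergence principle applied via Fatou sandwich.

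The main obstacle is the algebraic bookkeeping in Step~1: one must track how the exponential weight $\ce^{\beta u/(1+\beta h^B(a))}$ cancels against the two contributions of the decomposition lemma and how the residual $\ce^{\beta L^a_t/(1+\beta h^B(a))}$ appears, yielding precisely $M_t^{\beta,a}$. Everything else is a straightforward consequence of recurrence and the martingale property.
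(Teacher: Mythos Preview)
Your approach is correct and matches the paper's, which simply states that the proof ``is parallel to that of~\cite[Lemma 4.6 and Theorem 4.7]{MR3909919}'' and omits the details; the computation you outline---Markov property at $t$, the decomposition lemma for $\bP_y[f(L_{\eta^a_w})]$, the identity $\bP_a[\ce^{-\beta L_{\eta^a_w}}]=\ce^{-w\beta/(1+\beta h^B(a))}$, and then Scheff\'e---is exactly that parallel argument. One small sharpening: your Step~1 actually yields $N_t^{u,\beta,a}=M_t^{\beta,a}\,1_{\{L^a_t<u\}}$, so the convergence $N_t^{u,\beta,a}\nearrow M_t^{\beta,a}$ is monotone, which lets you invoke monotone convergence rather than needing to verify the martingale property of $M_t^{\beta,a}$ separately before applying Scheff\'e.
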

The proof of Theorem~\ref{Thm:inv-u-e} is parallel
to that of~\cite[Lemma 4.6 and Theorem 4.7]{MR3909919}.
So we omit it.

We consider the case \(f(x) = 1_{\cbra{x=0}}\).
\begin{Thm}\label{Thm:inv-u-1}
  Let \(x \in \bR\), \(a \in \bR\setminus\cbra{0}\), \(\beta>0\) and \(t>0\).
  Define
  \begin{align}
    N_t^{u,\infty,a}
     & = \ce^{u/h^B(a)}
    \bP_x\rbra{t<\eta_u^a<T_0|\cF_t}, \\
    M_t^{u,\infty,a}
     & = \ce^{u/h^B(a)}
    \bP_x\rbra{\eta_u^a<T_0|\cF_t},
  \end{align}
  and
  \begin{align}
    M_t^{\infty,a}
    = \ce^{L_t^a/h^B(a)} \bP_{X_t}(T_a<T_0) 1_{\cbra{t<T_0}}.
  \end{align}
  Then it holds that
  \begin{align}
    \lim_{u\to\infty}N_t^{u,\infty,a}
    =\lim_{u\to\infty}M_t^{u,\infty,a}
    =M_t^{\infty,a},\quad
    \text{\(\bP_x\)-a.s.\
      and in \(\cL^1(\bP_x)\).}
  \end{align}
\end{Thm}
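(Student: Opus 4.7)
\emph{Proof plan.} The strategy is a L\'evy analogue of~\cite[Lemma 4.6 and Theorem 4.7]{MR3909919}, parallel to the proof of Theorem~\ref{Thm:inv-u-e}. Set $H(y) \coloneqq \bP_y(T_a<T_0)$. The first step is to derive explicit $\cF_t$-forms. Translating Theorem~\ref{Thm:n-two-hitting-time} gives $n^a(T_0<T_a)=1/h^B(a)$, so $L_{T_0}^a$ is exponential of rate $1/h^B(a)$ under $\bP_a$, whence $\bP_a(\eta_v^a<T_0)=\ce^{-v/h^B(a)}$; the strong Markov property at $T_a$ then yields
\begin{align}
\bP_y(\eta_v^a<T_0)=H(y)\,\ce^{-v/h^B(a)},\quad v\ge 0,\; y\in\bR.
\end{align}
Conditioning on $\cF_t$ via the Markov property (the remaining local time needed on $\{t<\eta_u^a\}$ is $u-L_t^a$) produces
\begin{align}
N_t^{u,\infty,a} &= 1_{\{t<\eta_u^a\}}\,1_{\{t<T_0\}}\,\ce^{L_t^a/h^B(a)}\,H(X_t),\\
M_t^{u,\infty,a} &= N_t^{u,\infty,a}+\ce^{u/h^B(a)}\,1_{\{\eta_u^a\le t,\,\eta_u^a<T_0\}}.
\end{align}

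Almost-sure convergence is then immediate: recurrence of $X$ together with regularity of $a$ for itself gives $L_\infty^a=\infty$ a.s., so $\eta_u^a\uparrow\infty$ $\bP_x$-a.s.\ as $u\to\infty$. Consequently $1_{\{t<\eta_u^a\}}\uparrow 1$ and $1_{\{\eta_u^a\le t,\,\eta_u^a<T_0\}}\to 0$ a.s., and both $N_t^{u,\infty,a}$ and $M_t^{u,\infty,a}$ converge to $M_t^{\infty,a}$ $\bP_x$-a.s., with the crucial monotonicity that $u\mapsto N_t^{u,\infty,a}$ is non-decreasing.

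For $\cL^1$ convergence, note that $M_t^{u,\infty,a}$ is a conditional expectation process, hence a $\bP_x$-martingale in $t$, so $\bP_x[M_t^{u,\infty,a}]=M_0^{u,\infty,a}=\ce^{u/h^B(a)}\bP_x(\eta_u^a<T_0)=H(x)$. Monotone convergence then gives $\bP_x[M_t^{\infty,a}]=\lim_u\bP_x[N_t^{u,\infty,a}]\le H(x)$. The non-negative residual $R_t^{u,\infty,a}\coloneqq M_t^{u,\infty,a}-N_t^{u,\infty,a}$ satisfies $\bP_x[R_t^{u,\infty,a}]=H(x)-\bP_x[N_t^{u,\infty,a}]$, and by strong Markov at $T_a$ this reduces to controlling $\bP_a(\eta_u^a\le s\mid\eta_u^a<T_0)$ uniformly in $s\in[0,t]$. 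I would show this vanishes as $u\to\infty$ by It\^o excursion theory: under $\bP_a(\cdot\mid\eta_u^a<T_0)$ the excursions of $X$ from $a$ labelled by local times in $[0,u]$ form a Poisson point process with intensity $\cd l\otimes n^a|_{\{T_0=\infty\}}$, so $\eta_u^a$ is the value at time $u$ of a subordinator with L\'evy measure $n^a(\zeta\in\cdot,\,T_0=\infty)$, which is infinite since $n^a(\zeta<\infty)=\infty$ by regularity of $a$ while $n^a(T_0<\infty)=1/h^B(a)<\infty$. Hence $\eta_u^a\to\infty$ in conditional probability, which forces $\bP_x[R_t^{u,\infty,a}]\to 0$, identifies $\bP_x[M_t^{\infty,a}]$ with $H(x)$, and via Scheff\'e's lemma upgrades the a.s.\ convergences of $N_t^{u,\infty,a}$ and $M_t^{u,\infty,a}$ to $\cL^1(\bP_x)$.

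The main obstacle is this last excursion-theoretic vanishing $\bP_a(\eta_u^a\le s\mid\eta_u^a<T_0)\to 0$; every other ingredient is the Markov property, monotone convergence, and Scheff\'e's lemma.
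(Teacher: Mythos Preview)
Your argument is correct and matches the approach the paper intends: the paper omits the proof and simply says it is ``very similar to that of~\cite[Theorem 4.8]{MR3909919}'', and your derivation of the explicit $\cF_t$-forms via the strong Markov property, the monotone a.s.\ convergence of $N_t^{u,\infty,a}$, and the Scheff\'e step are exactly the expected ingredients. One small remark: you cite \cite[Lemma 4.6 and Theorem 4.7]{MR3909919}, which in the paper's scheme belongs to Theorem~\ref{Thm:inv-u-e} (the $\ce^{-\beta x}$ case); the correct parallel here is \cite[Theorem 4.8]{MR3909919}, though the structure is indeed analogous.

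Your handling of the $\cL^1$ step via the conditional Poisson point process is sound but slightly heavier than necessary. The key identity $\bP_a(\eta_u^a\le t\mid \eta_u^a<T_0)=\hat{\bP}(\hat\eta_u\le t)$, with $\hat\eta$ a subordinator of L\'evy measure $n^a(\zeta\in\cdot,\;T_0>\zeta)$, is correct; you only need that this L\'evy measure is nonzero (so $\hat\eta_u\to\infty$), not that it is infinite, though your observation that $n^a(T_0>\zeta)=\infty$ is also true. This is presumably the same mechanism underlying \cite[Theorem 4.8]{MR3909919}, so there is no genuine divergence in method.
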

The proof of Theorem~\ref{Thm:inv-u-1} is very similar to
that of~\cite[Theorem 4.8]{MR3909919}.
So we omit it.

\section{Universal \(\sigma\)-finite measures}\label{Sec:universal-measure}
In this section, we shall discuss the penalized processes and
\(\sigma\)-finite measures unifying the processes.
We have obtained the penalized measure \(\bQ^{(\gamma,f)}_x\),
which is given by
\begin{align}
  \bQ^{(\gamma,f)}_x|_{\cF_t}
  = \frac{M_t^{(\gamma,f)}}{M_0^{(\gamma,f)}}\cdot\bP_x|_{\cF_t},
  \quad -1\le \gamma \le 1.
\end{align}

\subsection{L\'{e}vy processes conditioned to avoid zero}
We show that \(h^{(\gamma)}\) is invariant for the killed process.
\begin{Thm}\label{Thm:hg-invariant}
  For \(-1\le \gamma\le 1\), it holds that
  \begin{align}
    \bP_x\sbra{h^{(\gamma)}(X_t);T_0>t} = h^{(\gamma)}(x), \quad
    n\sbra{h^{(\gamma)}(X_t);T_0>t} = 1, \quad
    x \in \bR.
  \end{align}
\end{Thm}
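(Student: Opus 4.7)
The plan is to derive the $\bP_x$-identity from Theorem~\ref{Thm:martingale} applied to a test function approximating $1_{\cbra{0}}$, and then to derive the $n$-identity by combining the Markov property under $n$ (which will force $s\mapsto n\sbra{h^{(\gamma)}(X_s);T_0>s}$ to be constant) with a compensation-formula calculation at $x=0$ that pins down the value of this constant.

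For the first identity, I would apply Theorem~\ref{Thm:martingale} with $f_\varepsilon\coloneqq 1_{[0,\varepsilon]}$. Since $f_\varepsilon(L_t)=1_{\cbra{L_t\le \varepsilon}}$ and $\int_0^\infty f_\varepsilon(L_t+u)\,\cd u=(\varepsilon-L_t)_+$, the martingale identity $\bP_x\sbra{M_t^{(\gamma,f_\varepsilon)}}=M_0^{(\gamma,f_\varepsilon)}=h^{(\gamma)}(x)+\varepsilon$ rearranges to
\begin{align}
\bP_x\sbra{h^{(\gamma)}(X_t); L_t\le \varepsilon} - \bP_x\sbra{L_t; L_t\le \varepsilon} = h^{(\gamma)}(x) + \varepsilon\,\bP_x(L_t>\varepsilon).
\end{align}
Under~\ref{item:assumption}, Lemma~\ref{Lem:iff-cond-notCPP-regular} implies that $0$ is regular for itself, so $\cbra{L_t=0}=\cbra{T_0>t}$ up to a $\bP_x$-null set. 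Letting $\varepsilon\downarrow 0$ and using dominated convergence (with dominator $M_t^{(\gamma,f_1)}\in \cL^1(\bP_x)$) yields the first claim.

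For the $n$-identity, set $g(s)\coloneqq n\sbra{h^{(\gamma)}(X_s);T_0>s}$. For any $0<\epsilon<s$, the Markov property under $n$ at time $\epsilon$, followed by the first identity applied inside the expectation, gives
\begin{align}
g(s) = n\sbra*{1_{\cbra{T_0>\epsilon}}\,\bP_{X_\epsilon}\sbra{h^{(\gamma)}(X_{s-\epsilon}); T_0>s-\epsilon}} = n\sbra{h^{(\gamma)}(X_\epsilon); T_0>\epsilon} = g(\epsilon).
\end{align}
Hence $g$ is constant on $(0,\infty)$; call the common value $c$. To find $c$, I compare two expressions for $\bP_0\sbra{h^{(\gamma)}(X_t)f(L_t)}$ with bounded $f\in\cL^1_+$. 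The martingale identity at $x=0$, together with $h^{(\gamma)}(0)=0$ and the substitution $v=L_t+u$, gives $\bP_0\sbra{h^{(\gamma)}(X_t)f(L_t)}=\bP_0\sbra{\int_0^{L_t}f(v)\,\cd v}$. On the other hand, decomposing $h^{(\gamma)}(X_t)f(L_t)$ as a sum over excursion intervals (only the one straddling $t$ contributes, since $h^{(\gamma)}(0)=0$) and applying Lemma~\ref{Lem:compensation-formula} with $g\equiv c$ on $(0,t)$ yields
\begin{align}
\bP_0\sbra{h^{(\gamma)}(X_t)f(L_t)} = \bP_0\sbra*{\int_0^t f(L_u)g(t-u)\,\cd L_u} = c\cdot\bP_0\sbra*{\int_0^{L_t}f(v)\,\cd v}.
\end{align}
Taking $f=1_{[0,1]}$, the right-hand side equals $c\,\bP_0\sbra{L_t\wedge 1}>0$ by recurrence and regularity of $0$, so $c=1$.

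The main technical point is the compensation-formula step: one must verify that the integrand $h^{(\gamma)}(e(t-u))f(L_u)1_{\cbra{u\le t<u+T_0(e)}}$ is predictable in $u$ for each fixed excursion $e$, and that $\cd L_u$ does not charge the endpoint $u=t$ (so that the ambiguity in $g(0)$ is harmless). Both are routine since $L$ is continuous in $t$ and the integrand is left-continuous in $u$.
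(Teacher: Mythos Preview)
Your proof is correct. For the $\bP_x$-identity your $\varepsilon$-approximation is a minor variant of what the paper does: the paper simply notes that $1_{\cbra{0}}\in\cL^1_+$, so Theorem~\ref{Thm:martingale} applied directly to $f=1_{\cbra{0}}$ gives $M_t^{(\gamma,1_{\cbra{0}})}=h^{(\gamma)}(X_t)1_{\cbra{T_0>t}}$ as a martingale; your limiting argument reaches the same conclusion with a small detour.

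For the $n$-identity the two proofs diverge. The paper reduces to the case $\gamma=0$, citing Pant\'{\i}~\cite[(iii) of Theorem~2.2]{MR3689384} for $n\sbra{h(X_t);T_0>t}=1$, and then adds the linear correction using parts~(\ref{Thm-item:p-kill-expect}) and~(\ref{Thm-item:n-expect}) of Theorem~\ref{Thm:mart-XfL} (namely $\bP_x\sbra{X_t;T_0>t}=x$ and $n\sbra{X_t;T_0>t}=0$). You instead give a self-contained argument that works uniformly in $\gamma$: Markov under $n$ forces $s\mapsto n\sbra{h^{(\gamma)}(X_s);T_0>s}$ to be constant, and the compensation formula combined with the martingale identity at $x=0$ pins the constant to $1$. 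Your scheme is in fact the same device the paper uses in the proof of Theorem~\ref{Thm:mart-XfL}(\ref{Thm-item:n-expect}) for the function $x\mapsto x$, just transplanted to $h^{(\gamma)}$; what you gain is that the external reference to Pant\'{\i} disappears and the $\gamma$-dependence is handled in one stroke. What the paper's route buys is brevity, since the compensation calculation has already been carried out once in the appendix for the linear part.

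One small remark: since $h^{(\gamma)}\ge 0$, your Markov-property step and the compensation identity are valid without any a priori finiteness of $g$; finiteness then falls out of the equation $c\cdot\bP_0\sbra{L_t\wedge 1}=\bP_0\sbra{\int_0^{L_t}f}<\infty$. It would be worth saying this explicitly. Also, positivity of $\bP_0\sbra{L_t\wedge 1}$ only needs regularity of $0$, not recurrence.
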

\begin{proof}
  We can show the case \(\gamma=0\)
  by the completely same discussion as the proof
  of Pant\'{\i}~\cite[(iii) of Theorem 2.2]{MR3689384}.
  Combining this with Theorem~\ref{Thm:mart-XfL}, we obtain the desired result.
  The former equation also follows from the fact that
  \((M_t^{(\gamma, 1_{\cbra{u=0}})},t\ge 0)\) is a \(((\cF_t),\bP_x)\)-martingale.
\end{proof}

Let \(\cH^{(\gamma)} = \cbra{x\in\bR\colon h^{(\gamma)}(x)>0 }\) and
\(\cH^{(\gamma)}_0 = \cH^{(\gamma)} \cup \cbra{0}\).
We introduce the \(h^{(\gamma)}\) transformed process given by
\begin{align}\label{eq:h-g-trans}
  \bP_x^{(\gamma)}|_{\cF_t}
  = \begin{dcases}
    1_{\cbra{T_0>t}} \frac{h^{(\gamma)}(X_t)}{h^{(\gamma)}(x)}
    \cdot \bP_x|_{\cF_t}               & \text{if } x\in\cH^{(\gamma)}, \\
    1_{\cbra{T_0>t}}  h^{(\gamma)}(X_t) \cdot n|_{\cF_t} & \text{if } x = 0.
  \end{dcases}
\end{align}
Since \(\bP_x^{(\gamma)}|_{\cF_t}\) is consistent in \(t>0\),
the probability measure \(\bP_x^{(\gamma)}\) can be well-defined
on \(\cF_\infty\coloneqq \sigma(X_t,t\ge 0)\),
for more details, see Yano~\cite[Theorem 9.1]{yano2021universality}.
For any \(t>0\), we have
\(\bP_x^{(\gamma)}\rbra{T_{\bR \setminus \cH^{(\gamma)} } > t} = 1\).
Consequently, we have
\(\bP_x^{(\gamma)}\rbra{T_{\bR \setminus\cH^{(\gamma)}} = \infty} = 1\) and
in particular, \(\bP_x^{(\gamma)}\rbra{T_0= \infty}=1\).
The process \(\bP_x^{(\gamma)}\)
is called a \textit{L\'{e}vy process conditioned to avoid zero}.
Note that, for \(x\in\cH^{(\gamma)}\),
the measure \(\bP_x^{(\gamma)}\) is absolutely continuous with respect to
\(\bP_x\) on \(\cF_t\), but is
singular to \(\bP_x\) on \(\cF_\infty\) since \(\bP_x(T_0<\infty)=1\).

By Theorems~\ref{Thm:exp-time-result},~\ref{Thm:hitting-time-result},~\ref{Thm:two-point-hitting-time-result}
and~\ref{Thm:inv-LT-result}
(Corollaries~\ref{Cor:hitting-cond} and~\ref{Cor:two-point-hitting-cond})
and by taking \(f=1_{\cbra{u=0}}\),
we have the following conditioning results.
\begin{Cor}\label{Cor:avoid-zero}
  Let \(t>0\) and \(F_t\) be a bounded \(\cF_t\)-measurable functional.
  Then the following assertions hold:
  \begin{enumerate}
    \item
    \(\displaystyle
    \lim_{q\to 0+}
    \bP_x\sbra{F_t|T_0>\bm{e}_q}=\bP_x^{(0)}\sbra{F_t}
    \), for \(x\in \cH^{(0)}\);\label{Cor-item:avoid-zero-exp}
    \item
    \(\displaystyle
      \lim_{a\to\pm\infty}
      \bP_x\sbra{F_t|T_0>T_a}=\bP_x^{(\pm 1)}\sbra{F_t}
    \), for \(x\in \cH^{(\pm 1)}\);
    \item
    \(\displaystyle
      \lim_{(a,b)\xrightarrow[]{\gamma}\infty}
      \bP_x\sbra{F_t|T_0>T_{a,-b}}=\bP_x^{(\gamma)}\sbra{F_t}
    \), for \(-1\le \gamma\le 1\) and \(x\in \cH^{(\gamma)}\);
    \item
    \(\displaystyle
      \lim_{a\to\pm\infty}
      \bP_x\sbra{F_t|T_0>\eta^a_u}=\bP_x^{(\pm 1)}\sbra{F_t}
    \), for \(u>0\) and \(x\in \cH^{(\pm 1)}\).
  \end{enumerate}
\end{Cor}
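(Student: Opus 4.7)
The plan is to derive each of the four conditioning limits as an immediate specialization of the corresponding penalization theorem with the degenerate weight $f(u)=1_{\cbra{u=0}}$. Concretely, part~(i) comes from Theorem~\ref{Thm:exp-time-result} (with $\gamma=0$), part~(ii) from Theorem~\ref{Thm:hitting-time-result} (equivalently Corollary~\ref{Cor:hitting-cond}), part~(iii) from Theorem~\ref{Thm:two-point-hitting-time-result} (equivalently Corollary~\ref{Cor:two-point-hitting-cond}), and part~(iv) from Theorem~\ref{Thm:inv-LT-result}. All four are handled in parallel.

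The first step is to record two pathwise identities common to every case. Since $L$ is continuous and grows only at the origin, and since $0$ is regular for itself, $\cbra{L_t=0}=\cbra{T_0>t}$ a.s., so $f(L_\tau)=1_{\cbra{T_0>\tau}}$ for each of the four random clocks $\tau$. The ratio appearing on the left-hand side of each source theorem therefore simplifies to
\begin{align}
\frac{\bP_x\sbra{F_t f(L_\tau)}}{\bP_x\sbra{f(L_\tau)}}=\bP_x\sbra{F_t\mid T_0>\tau}.
\end{align}
Moreover, $\int_0^\infty 1_{\cbra{L_t+u=0}}\,\cd u=0$ pathwise, so the martingale from~\eqref{eq:def-mart} collapses to
\begin{align}
M_t^{(\gamma)}=h^{(\gamma)}(X_t)\,1_{\cbra{T_0>t}},\qquad M_0^{(\gamma)}=h^{(\gamma)}(x),
\end{align}
and the hypothesis $x\in\cH^{(\gamma)}$ coincides exactly with the nondegeneracy assumption $M_0^{(\gamma)}>0$ in each source theorem.

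With these identifications, each of the four statements follows by reading off the right-hand side of the relevant penalization theorem and recognizing it as
\begin{align}
\bP_x\sbra*{F_t\,\frac{h^{(\gamma)}(X_t)}{h^{(\gamma)}(x)};\,T_0>t}=\bP_x^{(\gamma)}\sbra{F_t},
\end{align}
where the equality is the very definition~\eqref{eq:h-g-trans} of the $h^{(\gamma)}$-transform on $\cF_t$.

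The one point deserving care --- and the main (modest) obstacle --- is that $f=1_{\cbra{u=0}}$ is a boundary case of the hypotheses in the four source theorems, since it has $\int f\,\cd u=0$. One option is to observe that the proofs of those theorems apply verbatim, because the arguments use only the nonnegativity and integrability of $f$ together with nonnegative $\cF_t$-measurable bounds. A cleaner alternative, which I would adopt, is to approximate by $f_\varepsilon=1_{[0,\varepsilon]}\in\cL_{+}^1$ and pass to $\varepsilon\to 0+$: pointwise one has $f_\varepsilon(L_t)\to 1_{\cbra{L_t=0}}$ and $\int_0^\infty f_\varepsilon(L_t+u)\,\cd u=(\varepsilon-L_t)_+\to 0$, so $M_t^{(\gamma,f_\varepsilon)}\to M_t^{(\gamma,f)}$ boundedly by the continuity of $h^{(\gamma)}$ from Theorem~\ref{Thm:exist-h}, and dominated convergence exchanges the clock limit with $\varepsilon\to 0+$ to close the argument.
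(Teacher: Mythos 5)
Your approach is exactly the paper's: the corollary is obtained by specializing the four penalization theorems (Theorems~\ref{Thm:exp-time-result}, \ref{Thm:hitting-time-result}, \ref{Thm:two-point-hitting-time-result}, \ref{Thm:inv-LT-result}) to \(f=1_{\cbra{u=0}}\), after which the weight becomes \(1_{\cbra{T_0>\tau}}\), the integral term in \(M_t^{(\gamma)}\) vanishes, and the limiting density \(M_t^{(\gamma)}/M_0^{(\gamma)}=1_{\cbra{T_0>t}}h^{(\gamma)}(X_t)/h^{(\gamma)}(x)\) is read off from the definition~\eqref{eq:h-g-trans}. Your extra worry that \(\int_0^\infty f\,\cd u=0\) is a ``boundary case,'' and the proposed approximation by \(f_\varepsilon=1_{[0,\varepsilon]}\), are unnecessary: the paper defines \(\cL_{+}^1\) simply as non-negative \(f\) with \(\int_0^\infty f\,\cd u<\infty\), which includes integral zero; and the only nondegeneracy hypothesis in each source theorem is \(M_0^{(\gamma)}>0\), which for \(f=1_{\cbra{u=0}}\) reduces to \(M_0^{(\gamma)}=h^{(\gamma)}(x)>0\), i.e.\ precisely \(x\in\cH^{(\gamma)}\). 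So the theorems apply verbatim with no limiting argument, and the approximation step, while harmless, adds nothing.
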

Note that (\ref{Cor-item:avoid-zero-exp}) of Corollary~\ref{Cor:avoid-zero}
generalizes Pant\'{\i}~\cite[Theorem 2.7]{MR3689384}.

\subsection{Universal \(\sigma\)-finite measures}
In this subsection, we assume that \((X,\bP_x)\) has
a transition density \(p_t(\cdot)\).
Then we can construct the L\'{e}vy bridge.
Let \(\bP_{x,y}^u\) denote the law of bridge
from \(X_0=x\) to \(X_u=y\).
This measure can be constructed as
\begin{align}
  \bP_{x,y}^u(A) = \bP_x\sbra*{1_A\frac{p_{u-t}(y-X_t)}{p_t(y-x)}},
  \quad A\in\cF_t,\; 0< t<u.
\end{align}
See Fitzsimmons--Pitman--Yor~\cite{MR1278079}.
We have the conditioning formula:
\begin{align}
  \bP_x\sbra*{\int_0^t F_u \, \cd L_u}
  = \int_0^t \bP_x\sbra{\cd L_u}\bP_{x, 0}^{u}\sbra{F_u},
  \quad t>0,
\end{align}
for all non-negative predictable processes \((F_u)\), where
we write symbolically
\( \bP_x\sbra{\cd L_u} = p_u(-x) \, \cd u\).

For \(x\in \bR\) and \(-1\le \gamma\le 1\), we define
\begin{align}
  \cP_x^{(\gamma)}
   & = \int_0^\infty \bP_x\sbra{\cd L_u}
  \rbra*{\bP_{x,0}^{u} \bullet \bP_0^{(\gamma)}}
  + h^{(\gamma)}(x)\bP_x^{(\gamma)},
\end{align}
where
the symbol \(\bullet\) stands for the concatenation and
\(h^{(\gamma)}(x)\bP_x^{(\gamma)} = 0\)
for \(x \in \bR\setminus\cH^{(\gamma)}\).
Then we have the following:
\begin{Thm}
  Let \(x \in \bR\) and \(f\in \cL^1_{+}\).
  Let \(t>0\) and \(F_t\) be a bounded \(\cF_t\)-measurable
  functional.
  Then the following assertions hold:
  \begin{enumerate}
    \item
          \(\displaystyle
          \lim_{q\to 0+}
          r_q(0) \bP_x\sbra{F_t f(L_{\bm{e}_q})}
          = \cP_x^{(0)} \sbra{F_t f(L_{\infty})}\);
    \item
          \(\displaystyle
          \lim_{a\to\pm\infty}
          h^B(a)\bP_x\sbra{F_t f(L_{T_a})}
          = \cP_x^{(\pm 1)} \sbra{F_t f(L_{\infty})}\);
    \item
          \(\displaystyle
          \lim_{(a,b)\xrightarrow[]{\gamma}\infty}
          h^C(a, -b)
          \bP_x\sbra{F_t f(L_{T_{a, -b}})}
          =\cP_x^{(\gamma)} \sbra{F_t f(L_{\infty})}\),
          for \(-1\le\gamma\le 1\);
    \item
          \(\displaystyle
          \lim_{a\to\pm\infty}
          h^B(a)\bP_x\sbra{F_t f(L_{\eta^a_u})}
          = \cP_x^{(\pm 1)} \sbra{F_t f(L_{\infty})}\), for \(u>0\).
  \end{enumerate}
\end{Thm}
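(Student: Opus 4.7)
The plan is to treat all four assertions uniformly by recognising a single intermediate quantity \(\bP_x\sbra{F_t M_t^{(\gamma,f)}}\) and identifying it with \(\cP_x^{(\gamma)}\sbra{F_t f(L_\infty)}\).  Let \(c_\tau\) denote the normalizing factor attached to each clock (namely \(r_q(0)\), \(h^B(a)\), \(h^C(a,-b)\) and \(h^B(a)\) respectively).  Theorems~\ref{Thm:exp-time-result},~\ref{Thm:hitting-time-result},~\ref{Thm:two-point-hitting-time-result} and~\ref{Thm:inv-LT-result} state that \(c_\tau\bP_x\sbra{f(L_\tau)|\cF_t}\to M_t^{(\gamma,f)}\) in \(\cL^1(\bP_x)\), with the parameter \(\gamma\) equal to \(0\), \(\pm 1\), \(\gamma\), \(\pm 1\) respectively.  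By the tower property, \(c_\tau\bP_x\sbra{F_t f(L_\tau)}=\bP_x\sbra{F_t\,c_\tau\bP_x\sbra{f(L_\tau)|\cF_t}}\), so boundedness of \(F_t\) and \(\cL^1\)-convergence yield that each left-hand side converges to \(\bP_x\sbra{F_t M_t^{(\gamma,f)}}\).  The theorem therefore reduces to the identity
\begin{equation}\label{eq:proposal-key}
\bP_x\sbra{F_t M_t^{(\gamma,f)}} = \cP_x^{(\gamma)}\sbra{F_t f(L_\infty)}.
\end{equation}

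To prove~\eqref{eq:proposal-key} I would split \(M_t^{(\gamma,f)}=h^{(\gamma)}(X_t)f(L_t)+\int_0^\infty f(L_t+v)\cd v\) and match each summand against a piece of \(\cP_x^{(\gamma)}\).  For the integral piece, recurrence gives \(L_\infty=\infty\) \(\bP_x\)-a.s., so the change of variables \(w=L_u\) turns \(\int_0^\infty f(L_t+v)\cd v\) into \(\int_t^\infty f(L_u)\cd L_u\), and the Fitzsimmons--Pitman--Yor bridge--local-time identity stated just above the theorem produces \(\int_t^\infty\bP_x\sbra{\cd L_u}\bP_{x,0}^u\sbra{F_t f(L_u)}\).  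For \(u\ge t\) the functional \(F_t\) depends only on the bridge part of a concatenation \(\bP_{x,0}^u\bullet\bP_0^{(\gamma)}\), and \(L_\infty=L_u\) on such a concatenation (since \(\bP_0^{(\gamma)}\) avoids zero and accrues no further local time); this matches exactly the \(u\ge t\) portion of the bridge integral in \(\cP_x^{(\gamma)}\).

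For the \(h^{(\gamma)}(X_t)f(L_t)\) piece I would split on \(\cbra{T_0>t}\) versus \(\cbra{T_0\le t}\).  On \(\cbra{T_0>t}\) (void when \(x=0\) by regularity) we have \(L_t=0\), and the change-of-measure~\eqref{eq:h-g-trans} combined with \(L_\infty=0\) under \(\bP_x^{(\gamma)}\) immediately yields \(\bP_x\sbra{F_t h^{(\gamma)}(X_t)f(L_t);T_0>t}=h^{(\gamma)}(x)\bP_x^{(\gamma)}\sbra{F_t f(L_\infty)}\), the pure \(h^{(\gamma)}\)-summand of \(\cP_x^{(\gamma)}\).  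On \(\cbra{T_0\le t}\), the main work of the proof, I would decompose at the last zero \(g_t=\sup\cbra{s\le t:X_s=0}\) via Itô's compensation formula (Lemma~\ref{Lem:compensation-formula}) applied with the predictable functional \(F(s,X,e)=f(L_s(X))F_t(X|_{[0,s]}\bullet e|_{[0,t-s]})h^{(\gamma)}(e_{t-s})1_{\cbra{T_0(e)>t-s}}\).  At most one excursion straddles \(t\), so the left-hand sum collapses to \(F_t h^{(\gamma)}(X_t)f(L_t)1_{\cbra{T_0\le t}}\) (using \(L_{g_t}=L_t\)), while the right-hand side of the compensation formula becomes an integral against \(\cd L_s\) whose integrand is \(f(L_s)\,n\sbra{F_t(X|_{[0,s]}\bullet e|_{[0,t-s]})h^{(\gamma)}(e_{t-s});T_0(e)>t-s}\); the inner \(n\)-expectation is exactly \(\bP_0^{(\gamma)}\sbra{F_t(X|_{[0,s]}\bullet\cdot|_{[0,t-s]})}\) by~\eqref{eq:h-g-trans}.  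Running the bridge--local-time formula in reverse then rewrites the expression as \(\int_0^t\bP_x\sbra{\cd L_u}(\bP_{x,0}^u\bullet\bP_0^{(\gamma)})\sbra{F_t f(L_\infty)}\), the \(u\le t\) portion of the bridge integral in \(\cP_x^{(\gamma)}\); summing the three matched pieces gives~\eqref{eq:proposal-key}.

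The principal obstacle is this last \(\cbra{T_0\le t}\) step: one has to invoke the compensation formula with precisely the right predictable functional and recognise that the naturally arising weight \(h^{(\gamma)}(e_{t-s})1_{\cbra{T_0(e)>t-s}}\) against the excursion measure \(n\) coincides with the Radon--Nikodym density defining \(\bP_0^{(\gamma)}\) on \(\cF_{t-s}\) in~\eqref{eq:h-g-trans}.  All other ingredients --- the reduction, the change of variables, and the bridge identity manipulations --- are routine.
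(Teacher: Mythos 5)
Your proposal is correct and follows the same skeleton as the paper's argument: first reduce all four assertions to the single identity $\bP_x\sbra{F_t M_t^{(\gamma,f)}} = \cP_x^{(\gamma)}\sbra{F_t f(L_\infty)}$ (via the $\cL^1$-convergences of Theorems~\ref{Thm:exp-time-result}, \ref{Thm:hitting-time-result}, \ref{Thm:two-point-hitting-time-result}, \ref{Thm:inv-LT-result} and boundedness of $F_t$), and then establish this identity. The only difference is that the paper does not prove the identity itself but delegates it to the diffusion analogue (Theorem 5.3 of Profeta--Yano--Yano \cite{MR3909919}), whereas you carry out the translation to the L\'{e}vy setting explicitly: matching $\int_0^\infty f(L_t+v)\,\cd v$ to the $u\ge t$ part of the bridge integral via the change of variables $\cd v=\cd L_u$ and the bridge--local-time identity; matching the $\{T_0>t\}$ part of $h^{(\gamma)}(X_t)f(L_t)$ to the $h^{(\gamma)}(x)\bP_x^{(\gamma)}$ summand via the $h$-transform~\eqref{eq:h-g-trans}; and matching the $\{T_0\le t\}$ part to the $u\le t$ bridge integral via the compensation formula (Lemma~\ref{Lem:compensation-formula}), recognising $n\sbra{\cdot\, h^{(\gamma)}(X_{t-s});T_0>t-s}=\bP_0^{(\gamma)}\sbra{\cdot}$. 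That last decomposition at the last zero $g_t$ is exactly the mechanism used in the cited reference, so the two proofs coincide in substance; yours has the minor virtue of being self-contained in the L\'{e}vy language.
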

\begin{proof}
  It suffices to show that
  \begin{align}
    \cP^{(\gamma)}_x\sbra{F_t f(L_\infty)} = \bP_x\sbra{F_t M_t^{(\gamma)}},
  \end{align}
  for \(-1 \le \gamma \le 1\).
  The proof is the same as that of Theorem 5.3 of~\cite{MR3909919}.
\end{proof}
Consequently, we obtain the representation of \(\bQ^{(\gamma,f)}_x\) as follows:
\begin{align}
  \bQ^{(\gamma,f)}_x
  = \frac{f(L_\infty)}{\cP^{(\gamma)}_x\sbra{f(L_\infty)}}
  \cdot \cP^{(\gamma)}_x.
\end{align}
Recall that \(g = \sup\{t\colon X_t=0\}\).
Since \(\bO^{(\gamma,f)}_x(g < \infty) = \bP_x(g = \infty)= 1\),
the two measures are singular on \(\cF_\infty\).

\subsection{The law of \(L_\infty\) under \(\bQ_0^{(\gamma,f)}\)}
We assume,
for simplicity, that \(f\in \cL^1_{+}\)
satisfies \(\int_0^\infty f(u)\, \cd u = 1\).
Then we have \(M_0^{(\gamma,f)}=1\), \(\bP_0\)-a.s.
For \(l\ge 0\), the optional stopping theorem implies that
\begin{align}
  \bQ_0^{(\gamma,f)}(L_t\ge l)
  = \bP_0\sbra{M_t^{(\gamma, f)}; L_t\ge l}
  = \bP_0\sbra{M_{\eta_l}^{(\gamma, f)}; \eta_l\le t}.
\end{align}
Letting \(t\to\infty\), we may apply the monotone convergence theorem to
deduce that
\begin{align}
  \bQ_0^{(\gamma,f)}(L_\infty\ge l) = \bP_0\sbra{M_{\eta_l}^{(\gamma, f)}}.
\end{align}
Since \(X_{\eta_l}= 0\) and \(L_{\eta_l}=l\), we have
\begin{align}
  \bP_0\sbra{M_{\eta_l}^{(\gamma, f)}}
  = \int_0^\infty f(l+u)\, du.
\end{align}
Therefore, it holds that
\begin{align}
  \bQ_0^{(\gamma,f)}(L_\infty \in \cd u) = f(u)\, \cd u,
  \quad u>0.
\end{align}

\section{The transient case}\label{Sec:trans}
We now study penalization in the transient case.
Throughout this section, we always assume
\(X\) is transient and
the conditions~\ref{item:cond-not-CPP} and~\ref{item:cond-regular} hold.
Recall that
\begin{align}
  \kappa\coloneqq \lim_{q\to 0+}\frac{1}{r_q(0)}
  = n(T_0=\infty)>0;
\end{align}
see~\eqref{eq:kappa}.
\subsection{The renormalized zero resolvent in the transient case}
As in the recurrent case, we define \(h_q(x)=r_q(0)-r_q(-x)\).
\begin{Thm}
  Suppose that
  the conditions~\ref{item:cond-not-CPP} and~\ref{item:cond-regular} hold.
  Then the following assertions hold.
  \begin{enumerate}
    \item For any \(x\in\bR\), it holds that
          \(\displaystyle
          h(x)\coloneqq \lim_{q\to 0+} h_q(x) = \kappa^{-1}\bP_x(T_0=\infty).
          \)
    \item The above convergence is uniform on compacts,
          and consequently \(h\) is continuous.\label{Thm-item:h-unif-conv-trans}
    \item \(h\) is subadditive on \(\bR\),
          that is, \(h(x+y)\le h(x)+h(y)\) for
          \(x,y\in\bR\).\label{Thm-item:h-subadd-trans}
  \end{enumerate}
\end{Thm}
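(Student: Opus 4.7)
The plan is to derive (i) directly from the Laplace transform formula for the hitting time, to deduce (iii) by the same strong Markov argument as in the recurrent case, and to obtain (ii) from the continuity of the potential density \(r(x) := \lim_{q\to 0+} r_q(x)\), which in the transient case I will establish via the resolvent equation alone, without needing the Fourier integrability hypothesis~\ref{item:assumption}.

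For (i), I would use the identity \(h_q(x) = r_q(0) - r_q(-x) = r_q(0)\rbra{1 - \bP_x\sbra{\ce^{-qT_0}}}\), which follows from~\eqref{eq:exp-hit-time}. As \(q \to 0+\), the factor \(r_q(0)\) increases monotonically to \(1/\kappa\) by the definition~\eqref{eq:kappa} of \(\kappa\), while by monotone convergence \(\bP_x\sbra{\ce^{-qT_0}} \to \bP_x(T_0 < \infty)\). The product yields \(h_q(x) \to \kappa^{-1}\bP_x(T_0 = \infty)\). For (iii), I would repeat the strong Markov argument from the proof of Theorem~\ref{Thm:exist-h}(iii) verbatim: it produces \(\bP_{x+y}\sbra{\ce^{-qT_0}} \ge \bP_x\sbra{\ce^{-qT_0}}\bP_y\sbra{\ce^{-qT_0}}\), whence \(h_q(x+y) \le h_q(x) + h_q(y)\) via the elementary inequality \(1 - ab \le (1-a) + (1-b)\) for \(a, b \in [0,1]\), and the subadditivity passes to the pointwise limit.

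The main obstacle is (ii), where the recurrent-case proof relied on a Fourier bound unavailable here. I would proceed as follows. Since \(r_q\) is decreasing in \(q\), the convergence \(r_q \nearrow r\) is monotone, and the bound \(r_q(x) \le r_q(0) \le 1/\kappa\) shows that \(r \in L^\infty(\bR)\) with \(\|r\|_\infty \le 1/\kappa < \infty\); crucially this uses transience. Letting \(p \to 0+\) in the resolvent equation in density form, \(r_p(z) - r_q(z) = (q - p)(r_p * r_q)(z)\), and invoking monotone convergence on the right side, I obtain
\[
r(z) = r_q(z) + q\,(r * r_q)(z).
\]
Since \(r \in L^\infty\) and \(r_q \in L^1(\bR)\) (with \(\int r_q = 1/q\) from taking \(f \equiv 1\) in Lemma~\ref{Lem:resolvent-density}), the convolution \(r * r_q\) is uniformly continuous by \(L^1\)-continuity of translations. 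Hence \(r\) is continuous, and Dini's theorem upgrades the monotone convergence \(r_q \nearrow r\) to uniform convergence on compact subsets of \(\bR\). Consequently \(h_q(x) = r_q(0) - r_q(-x) \to r(0) - r(-x) = h(x)\) uniformly on compacts, and \(h\) inherits continuity from \(r\).
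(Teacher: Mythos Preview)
Your proofs of (i) and (iii) are correct and coincide with the paper's: part~(i) is exactly the computation in~\eqref{eq:transient-h-conv}, and for~(iii) the paper likewise defers to the strong-Markov subadditivity argument from the proof of Theorem~\ref{Thm:exist-h}.

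For (ii) your route genuinely differs from the paper's, and is in fact more careful. The paper's proof merely says ``the same as that of Theorem~\ref{Thm:exist-h}'', but the equicontinuity step there relies on the Fourier bound
\[
h_q(\delta)+h_q(-\delta)\le \int_0^\infty \abs*{\frac{(\lambda\delta)^2\wedge 2}{\varPsi(\lambda)}}\,\cd\lambda,
\]
which requires condition~\ref{item:assumption} and not just~\ref{item:cond-not-CPP}--\ref{item:cond-regular}. Your alternative---passing to the limit $p\to 0+$ in the density resolvent equation to obtain $r=r_q+q\,(r*r_q)$, deducing continuity of $r$ from $r\in L^\infty(\bR)$ and $r_q\in L^1(\bR)$, and then applying Dini's theorem to the monotone family $r_q\nearrow r$---uses only transience (for the bound $r\le 1/\kappa$) and the continuity of $r_q$ supplied by Lemma~\ref{Lem:resolvent-density}. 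Each step checks out: monotone convergence justifies the limit in the convolution, $L^1$-continuity of translations gives uniform continuity of $r*r_q$, and the hypotheses of Dini are met. What the paper's referenced argument would buy, when~\ref{item:assumption} is available, is an explicit modulus of equicontinuity for the family $\{h_q\}$; your approach trades that for a soft argument that is valid under the weaker hypotheses actually stated in the theorem.
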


\begin{proof}
  It follows from~\eqref{eq:exp-hit-time}
  and~\eqref{eq:kappa} that
  \begin{align}\label{eq:transient-h-conv}
    h_q(x) = r_q(0)\bP_x\sbra{1-\ce^{-qT_0}}
    \longrightarrow \kappa^{-1} \bP_x(T_0 = \infty),
    \quad \text{as \(q \to 0+\).}
  \end{align}
  The proof of (\ref{Thm-item:h-unif-conv-trans}) and (\ref{Thm-item:h-subadd-trans})
  are the same as that of Theorem~\ref{Thm:exist-h}.
\end{proof}

\begin{Thm}
  Suppose that
  the conditions~\ref{item:cond-not-CPP} and~\ref{item:cond-regular} hold.
  Then the following assertions hold:
  \begin{enumerate}
    \item
          \(\displaystyle
          \lim_{x\to\pm\infty} \frac{h(x)}{\abs{x}} =0
          \);\label{Thm-item:h/x-infinity-trans}
    \item
          \(\displaystyle
          \lim_{y\to\pm\infty} \cbra*{h(x+y) - h(y)} =0,
          \) for all \(x\in\bR\).\label{Thm-item:h-h-limit-trans}
  \end{enumerate}
\end{Thm}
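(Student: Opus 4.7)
The plan is to handle Part~(i) by boundedness and to attack Part~(ii) via a subsequence-extraction argument exploiting the subadditivity of $h$. For Part~(i), the preceding theorem yields $h(x)=\kappa^{-1}\bP_x(T_0=\infty)\in[0,\kappa^{-1}]$, so $h$ is bounded on $\bR$ and $h(x)/\abs{x}\to 0$ as $\abs{x}\to\infty$.

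For Part~(ii), I argue by contradiction. Suppose the assertion fails along some sequence $y_n\to+\infty$ (the $-\infty$ case is parallel): $h(x+y_n)-h(y_n)\to L\ne 0$ for some $x\in\bR$. By boundedness of $h$ and a diagonal extraction, one may pass to a further subsequence---still denoted $(y_n)$---such that $\phi(a)\coloneqq\lim_n[h(a+y_n)-h(y_n)]$ exists for every $a$ in a countable dense set $D\subset\bR$ containing $x$. The subadditivity of $h$ (assertion~(iii) of the preceding theorem) yields the estimate
\[
\abs{h(a+y)-h(b+y)}\le h(a-b)+h(b-a),\qquad a,b,y\in\bR.
\]
Together with the continuity of $h$ at $0$ (assertion~(ii) of the preceding theorem) and $h(0)=0$, this makes the family $a\mapsto h(a+y_n)-h(y_n)$ equi-continuous in $a$, so pointwise convergence on $D$ upgrades to uniform convergence on compact subsets of $\bR$, and the limit $\phi$ extends continuously to all of $\bR$.

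Additivity of $\phi$ follows from the decomposition
\[
h(a+b+y_n)-h(y_n)=\bigl[h(a+b+y_n)-h(a+y_n)\bigr]+\bigl[h(a+y_n)-h(y_n)\bigr]
\]
upon letting $n\to\infty$, noting that $a+y_n\to+\infty$ as well. A continuous additive function on $\bR$ is linear, $\phi(a)=c\,a$, but the uniform bound $\abs{\phi(a)}\le 2\kappa^{-1}$ forces $c=0$, whence $\phi\equiv 0$ and $L=\phi(x)=0$, contradicting $L\ne 0$. The main obstacle is securing the continuity of the candidate limit $\phi$ so as to invoke linearity; this is exactly where the subadditivity of $h$ is indispensable, via the equi-continuity estimate displayed above.
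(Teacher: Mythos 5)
Part~(i) is fine and matches the paper: \(h(x)=\kappa^{-1}\bP_x(T_0=\infty)\le\kappa^{-1}\), so boundedness gives the limit.

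Part~(ii) has a genuine gap at the additivity step, which is exactly where the paper's proof (by reference to the recurrent \(m^2=\infty\) case) uses a key fact that you never invoke: the vanishing of the second difference \(h(y+2x)-2h(y+x)+h(y)\to 0\) as \(y\to\pm\infty\) (Lemma~\ref{Lem:exist-hheq}(\ref{Lem-item:h^D})). Your subsequence extraction and equicontinuity argument correctly produce a continuous bounded \(\phi\) with \(g_{y_n}(\cdot):=h(\cdot+y_n)-h(y_n)\to\phi\) locally uniformly. But in the decomposition
\[
h(a+b+y_n)-h(y_n)=\bigl[h\bigl(b+(a+y_n)\bigr)-h(a+y_n)\bigr]+\bigl[h(a+y_n)-h(y_n)\bigr],
\]
the first bracket is a first difference along the shifted sequence \((a+y_n)\), and the observation that \(a+y_n\to+\infty\) does not make it converge to \(\phi(b)\): \(\phi\) was defined only along \((y_n)\), and sequence-independence of the limit is precisely the assertion under proof. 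Indeed, writing the first bracket as \(g_{y_n}(a+b)-g_{y_n}(a)\to\phi(a+b)-\phi(a)\) shows that your identity reduces to the tautology \(\phi(a+b)=[\phi(a+b)-\phi(a)]+\phi(a)\). To get additivity one needs the extra cancellation supplied by \(h^D\to 0\): along \((y_n)\) it gives \(\phi(2x)-2\phi(x)=0\), and shifting by \(kx\) then gives \(\phi((k+1)x)-2\phi(kx)+\phi((k-1)x)=0\), hence \(\phi(kx)=k\phi(x)\), which with continuity and boundedness forces \(\phi\equiv 0\). The paper instead telescopes directly: by subadditivity \(\sum_{k=1}^n\{h(kx+y)-h((k-1)x+y)\}\le h(nx)\), and since consecutive increments differ by a quantity tending to \(0\) (again \(h^D\to 0\)), one gets \(\limsup_{y}\{h(x+y)-h(y)\}\le h(nx)/n\to 0\), then symmetrizes. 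Either route requires establishing \(h^D(x,\cdot)\to 0\) at \(\pm\infty\); without it your argument does not close.
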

\begin{proof}
  Since \(h(x)\le \kappa^{-1}\), it is
  obvious that~\eqref{Thm-item:h/x-infinity-trans} holds.
  The proof of~\eqref{Thm-item:h-h-limit-trans} is the same as that of
  (\ref{Thm-item:h-h-limit}) of Theorem~\ref{Thm:property-h}
  in the recurrent and \(m^2=\infty\) case.
\end{proof}
\subsection{Useful equations}
Before stating out penalization result, we introduce
some useful equations.
\begin{Lem}\label{Lem:useful-eq-trans}
  \begin{enumerate}
    \item For \(a\in\bR\),
          \begin{align}
            h^B(a) \coloneqq \lim_{q\to 0+} h_q^B(a)
            =\bP_0\sbra{L_{T_a}}= h(a) + h(-a) - \kappa h(a)h(-a).
            \label{eq:h^B-trans}
          \end{align}\label{Lem-item:hq-two-hitting-time-trans}
    \item For \(x,a,b\in\bR\) and \(a\ne b\),
          \begin{align}
            \bP_x(T_a < T_b)
            = \frac{h(b-a)+h(x-b)-h(x-a) - \kappa h(x-b)h(b-a)}{h^B(a-b)}.
            \label{eq:two-hittig-time-prob-trans}
          \end{align}
    \item
          For \(x,a,b\in\bR\) and \(a\ne b\),
          \begin{align}\label{eq:L_T_aT_b-trans}
            \bP\sbra{L_{T_a \wedge T_b}}
             & =\frac{1}{h^B(a-b)}
            \cbra*{\begin{multlined}
                \rbra[\big]{h(b)+h(-a)-\kappa h(-a)h(b)}h(a-b)	\\
                \qquad +\rbra[\big]{h(a)+h(-b)-\kappa h(-b)h(a)}h(b-a) \\
                -h(a-b)h(b-a)
              \end{multlined}}.
          \end{align}
  \end{enumerate}
\end{Lem}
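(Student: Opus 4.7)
The plan is to pass to the limit $q\to 0+$ in the $q$-identities already established in the recurrent part of the paper (Lemma~\ref{Lem:h^B-hitting-h} and the strong Markov decomposition used in the proof of Lemma~\ref{Lem:LT-two-hitting}), now using that $r_q(0)\to\kappa^{-1}\in(0,\infty)$ and $1/r_q(0)\to\kappa$, while $h_q\to h$ uniformly on compacts by the preceding theorem of this section.

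For (i), I would start from~\eqref{eq:LT-hitting-expect-q}: $h_q^B(a)=h_q(a)+h_q(-a)-h_q(a)h_q(-a)/r_q(0)$. Its left-hand side equals $\bP_0\sbra*{\int_0^{T_a}\ce^{-qt}\,\cd L_t}$, which increases to $\bP_0\sbra{L_{T_a}}$ as $q\to 0+$ by monotone convergence, while its right-hand side converges to $h(a)+h(-a)-\kappa h(a)h(-a)$, yielding~\eqref{eq:h^B-trans}. For (ii), I start from~\eqref{eq:qT_a;T_a<T_b}. On the left, $\ce^{-qT_a}1_{\cbra{T_a<T_b}}$ is dominated by $1_{\cbra{T_a<T_b}}\le 1$ and tends pointwise to $1_{\cbra{T_a<T_b}}$ (noting that $\cbra{T_a<T_b}\subseteq\cbra{T_a<\infty}$), so dominated convergence yields $\bP_x(T_a<T_b)$; the right-hand side converges to~\eqref{eq:two-hittig-time-prob-trans} via $h_q\to h$, part (i), and $1/r_q(0)\to\kappa$.

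For (iii), I would imitate the proof of Lemma~\ref{Lem:LT-two-hitting}. Applying the strong Markov property at $T_a\wedge T_b$ to $\int_0^\infty\ce^{-qt}\,\cd L_t$ and using $\bP_x\sbra*{\int_0^\infty\ce^{-qt}\,\cd L_t}=r_q(-x)=r_q(0)-h_q(x)$, I rearrange to
\begin{align}
  \bP_0\sbra*{\int_0^{T_a\wedge T_b}\ce^{-qt}\,\cd L_t}
  = r_q(0)(1-E_a^q-E_b^q)+h_q(a)E_a^q+h_q(b)E_b^q,
\end{align}
where $E_a^q=\bP_0\sbra{\ce^{-qT_a};T_a<T_b}$ and similarly $E_b^q$. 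In the transient case every factor has a finite limit, so letting $q\to 0+$ with monotone convergence on the left gives
\begin{align}
  \bP_0\sbra{L_{T_a\wedge T_b}} = \kappa^{-1}(1-P_a-P_b)+h(a)P_a+h(b)P_b,
\end{align}
where $P_a=\bP_0(T_a<T_b)$ and $P_b=\bP_0(T_b<T_a)$ are given by (ii). Substituting these together with $h^B(a-b)=h(a-b)+h(b-a)-\kappa h(a-b)h(b-a)$ from (i), multiplying through by $h^B(a-b)$, and collecting terms delivers~\eqref{eq:L_T_aT_b-trans}.

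The main obstacle will be the algebraic book-keeping in (iii): each of $P_a$, $P_b$, and $h^B(a-b)$ now carries a $\kappa$-correction absent in the recurrent case. In particular one must verify the cancellation
$\kappa^{-1}(1-P_a-P_b)=[h(-a)h(a-b)+h(-b)h(b-a)-h(a-b)h(b-a)]/h^B(a-b)$
by direct expansion from the formulas for $P_a$, $P_b$, $h^B$, so that the resulting expression remains finite and reduces to the recurrent formula as $\kappa\to 0$. Parts (i) and (ii) are essentially routine limit passages once the corresponding $q$-identities are invoked.
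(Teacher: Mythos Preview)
Your plan is precisely the paper's own: it omits the proof, saying it is ``similar to that in the recurrent case'', and your (i)--(iii) recapitulate exactly the limit passages in Lemma~\ref{Lem:h^B-hitting-h} and Lemma~\ref{Lem:LT-two-hitting} with $1/r_q(0)\to\kappa$ replacing $1/r_q(0)\to 0$.

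One caveat on the algebra in (iii): if you expand $\kappa^{-1}(1-P_a-P_b)+h(a)P_a+h(b)P_b$ using the formulae from (i) and (ii) and multiply by $h^B(a-b)$, you actually obtain the displayed right-hand side of~\eqref{eq:L_T_aT_b-trans} \emph{minus} the extra term $(h(a)-h(b))(h(-a)-h(-b))$; the same discrepancy is already present at the $q$-level and hence also in the recurrent formula of Lemma~\ref{Lem:LT-two-hitting}. For Brownian motion with $a=2$, $b=-1$ one gets $h^C(2,-1)=4/3$ directly, whereas the displayed formula yields $3/2$. This is a typo in the stated identity, not a defect of your method; your derivation is the intended one and produces the correct expression.
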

The proof of Lemma~\ref{Lem:useful-eq-trans} is similar to
that in the recurrent case.
So we omit it.

\begin{Lem}[{\cite[Lemma 3.10]{MR3689384}}]\label{Thm:h^B-infty-trans}
  It holds that \(\lim_{x\to\infty} h^B(x) = \kappa^{-1}\).
\end{Lem}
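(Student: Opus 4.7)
My plan is to mirror the lower-bound argument of Lemma~\ref{Thm:h^B-infty} from the recurrent case, and to supply a matching upper bound that comes for free from the finiteness of the total local time $L_\infty$ under transience. For the lower bound, I would repeat the decomposition
\begin{align}
  h^B(x)
  = \bP_0\sbra{L_{T_x}; T_x \le \bm{e}_q} + \bP_0\sbra{L_{T_x}; T_x > \bm{e}_q}
  \ge \bP_0\sbra{L_{\bm{e}_q}; T_x > \bm{e}_q}, \qquad q>0,
\end{align}
which uses only that $L$ is non-decreasing. Since $\bm{e}_q < \infty$ almost surely and $X$ is c\`adl\`ag, the path is bounded on $[0,\bm{e}_q]$, so $1_{\cbra{T_x > \bm{e}_q}} \to 1$ a.s.\ as $x \to \infty$. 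Dominated convergence, justified by $\bP_0\sbra{L_{\bm{e}_q}} = r_q(0) < \infty$, yields $\liminf_{x\to\infty} h^B(x) \ge r_q(0)$ for each $q>0$, and letting $q \to 0+$ together with~\eqref{eq:kappa} gives $\liminf_{x\to\infty} h^B(x) \ge \kappa^{-1}$.

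For the upper bound, monotonicity of $L$ gives $L_{T_x} \le L_\infty$ pointwise, so
\begin{align}
  h^B(x)
  = \bP_0\sbra{L_{T_x}}
  \le \bP_0\sbra{L_\infty}
  = \lim_{q\to 0+}\bP_0\sbra*{\int_0^\infty \ce^{-qt}\, \cd L_t}
  = \lim_{q\to 0+} r_q(0)
  = \kappa^{-1},
\end{align}
where the third equality is monotone convergence as $q\to 0+$, the fourth is~\eqref{eq:regularity-of-L}, and the fifth is~\eqref{eq:kappa}. Combined with the lower bound, this gives $\lim_{x\to\infty} h^B(x) = \kappa^{-1}$.

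There is no genuine obstacle here: the recurrent argument already delivers the lower bound, with its conclusion $\liminf \ge \infty$ in the recurrent regime degenerating to $\liminf \ge \kappa^{-1}$ in the transient regime, and the upper bound is handed to us by transience itself, which forces $L_\infty$ to be integrable with mean $\kappa^{-1}$. The only step that deserves a moment of care is the domination in the lower bound, and this is immediate from $\bP_0\sbra{L_{\bm{e}_q}} = r_q(0) < \infty$.
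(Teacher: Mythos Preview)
Your proof is correct and follows the same approach as the paper: the lower-bound argument via the exponential clock is identical, and your upper bound \(h^B(x) \le \bP_0\sbra{L_\infty} = \kappa^{-1}\) is a slightly more direct variant of the paper's (which bounds \(h^B(x) \le r_q(0) + \bP_0\sbra{L_{T_x}; T_x > \bm{e}_q}\) and then sends \(q \to 0+\)). Your direct route is in fact cleaner, since it avoids having to justify that the residual term \(\bP_0\sbra{L_{T_x}; T_x > \bm{e}_q}\) vanishes in the limit.
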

\begin{proof}
  For completeness of the paper, we give the same proof as
  that of~\cite[Lemma 3.10]{MR3689384}.
  Let \(\bm{e}\) be the exponentially distributed with parameter
  \(1\) and \(\bm{e}_q \coloneqq \bm{e}/q\) for \(q>0\).
  Then we already have~\eqref{eq:h_b-eq} and~\eqref{eq:h_b-lower}.
  Letting \(q\to 0+\) in~\eqref{eq:h_b-lower}, we obtain
  \begin{align}
    \liminf_{x\to\infty}h^B(x) \ge \kappa^{-1}.
  \end{align}
  On the other hand, it holds that
  \begin{align}
    h^B(x) \le \bP\sbra{L_{\bm{e}_q}} + \bP\sbra{L_{T_x}; T_x>\bm{e}_q}
    = r_q(0) + \bP\sbra{L_{T_x}; T_x>\bm{e}_q}.
  \end{align}
  Letting \(q\to 0+\), we have
  \(h^B(x) \le \kappa^{-1}\).
  Therefore we obtain the desired result.
\end{proof}

\begin{Thm}\label{Thm:n-two-hitting-time-trans}
  For \(a \in \bR \setminus \{0\}\), it holds that
  \begin{align}
    n(T_a < T_0 < \infty) = \frac{1-\kappa h^B(a)}{h^B(a)}, \quad
    \text{and} \quad
    n(T_a < T_0) = \frac{1-\kappa h(-a)}{h^B(a)}.
  \end{align}
\end{Thm}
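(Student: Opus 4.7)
The plan is to extend the excursion-theoretic proof of Theorem~\ref{Thm:n-two-hitting-time} to the transient regime, where the excursion measure $n$ now assigns positive mass $\kappa$ to $\{T_0 = \infty\}$. For $a \ne 0$, I partition the excursion space into the three disjoint Borel sets
\begin{align}
  A = \{T_a < T_0\}, \quad B = \{T_0 < T_a\}, \quad C = \{T_a = T_0 = \infty\},
\end{align}
and write $\alpha = n(T_a < T_0 < \infty)$, $\gamma = n(T_a < \infty,\, T_0 = \infty)$ and $\delta = n(C)$, so that $n(A) = \alpha + \gamma$ and $\gamma + \delta = n(T_0 = \infty) = \kappa$.

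First I would determine the law of $L_{T_a}$ under $\bP_0$, with the convention $L_{T_a} = L_\infty$ when $T_a = \infty$. Since $L$ is continuous, the event $\{L_{T_a} > l\}$ coincides up to null sets with $\{\eta_l < T_a\}$, and inspection of the Poisson point process of excursions shows this is exactly the event that no excursion in $A \cup C$ appears with local-time mark in $[0, l]$. Hence
\begin{align}
  \bP_0(L_{T_a} > l) = \ce^{-l(n(A) + \delta)},
\end{align}
so that $L_{T_a}$ is exponential and $h^B(a) = \bP_0\sbra{L_{T_a}} = 1/(n(A) + \delta)$. Since $n(A) + \delta = \alpha + \gamma + (\kappa - \gamma) = \alpha + \kappa$, the first identity follows at once: $n(T_a < T_0 < \infty) = 1/h^B(a) - \kappa = (1 - \kappa h^B(a))/h^B(a)$.

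For the second identity, the first appearance times of type-$A$ and type-$C$ excursions are independent exponentials with rates $n(A)$ and $\delta$ respectively, and $\{T_a < \infty\}$ is precisely the event that the former precedes the latter, so
\begin{align}
  \bP_0(T_a < \infty) = \frac{n(A)}{n(A) + \delta} = h^B(a)\, n(A).
\end{align}
On the other hand, spatial homogeneity of $X$ gives $\bP_0(T_a < \infty) = \bP_{-a}(T_0 < \infty)$, which equals $1 - \kappa h(-a)$ by the earlier representation $\bP_x(T_0 = \infty) = \kappa h(x)$. Comparing these two expressions yields $n(T_a < T_0) = (1 - \kappa h(-a))/h^B(a)$. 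The main delicate point is the three-type decomposition and the clean verification that $\{\eta_l < T_a\}$ corresponds exactly to avoiding both $A$ and $C$ in the marked Poisson process—particularly the edge case $T_a = \infty$, which occurs precisely when the first type-$C$ excursion precedes every type-$A$ one. Once this bookkeeping is in place, both identities reduce to standard competition of independent Poisson arrivals.
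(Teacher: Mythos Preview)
Your derivation of the first identity matches the paper's exactly: both recognise that $L_{T_a}$ equals the first local-time mark at which an excursion in $A\cup C$ arrives, so that $L_{T_a}$ is exponential with rate $n(A\cup C)=\alpha+\kappa$, and then invert $h^B(a)=\bP_0[L_{T_a}]$. Your three-set partition $A\cup B\cup C$ is just a relabelling of the paper's set $\{T_0=\infty\}\cup\{T_a<T_0<\infty\}$.

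For the second identity, your route genuinely differs from the paper's. The paper applies the strong Markov property of the excursion measure $n$ at the stopping time $T_a$ to write
\[
  n(T_a<T_0<\infty)=\bP_a(T_0<\infty)\,n(T_a<T_0)=(1-\kappa h(a))\,n(T_a<T_0),
\]
and then simplifies via the transient formula $h^B(a)=h(a)+h(-a)-\kappa h(a)h(-a)$, which factors $1-\kappa h^B(a)=(1-\kappa h(a))(1-\kappa h(-a))$. You instead compute $\bP_0(T_a<\infty)$ two ways: once as the Poisson competition probability $n(A)/(n(A)+\delta)=h^B(a)\,n(T_a<T_0)$, and once by spatial homogeneity as $\bP_{-a}(T_0<\infty)=1-\kappa h(-a)$. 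Your argument is more elementary in that it avoids both the Markov property of $n$ (which costs a reference to Blumenthal) and the algebraic identity for $h^B$; the paper's argument, on the other hand, stays entirely inside the excursion measure and does not need to reconstruct $\bP_0(T_a<\infty)$ from the point process. Both are clean and correct; your competition-of-exponentials description is a nice way to see where the factor $1-\kappa h(-a)$ comes from without any computation.
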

\begin{proof}
  For \(l > 0\), it holds that
  \begin{align}
    \bP(L_{T_a}>l) = \bP(T_a > \eta_l) =
    \bP(\sigma_{\cbra{T_0 =\infty}\cup \cbra{T_a<T_0<\infty}}> l),
  \end{align}
  where \(\sigma_A = \inf\cbra{l \colon e_l \in A}\)
  for \(A \subset \cD\).
  Since \(\sigma_A\) is the hitting time
  of the set \(A\)
  for the killed
  Poisson point process \(((l, e_l), l\ge 0)\),
  we have
  \begin{align}\label{eq:L_T_a-exp-distrib-trans}
    \bP(L_{T_a}>l)
    = \ce^{-l\cbra{n(T_0=\infty)+n(T_a<T_0<\infty)}}
    = \ce^{-l\cbra{\kappa +n(T_a<T_0<\infty)}}.
  \end{align}
  For more details, see, e.g.,~\cite[Lemma 6.17]{MR3155252}.
  In particular, \(L_{T_a}\) is exponentially distributed.
  On the other hand, we know \(h^B(a) = \bP\sbra{L_{T_a}}\)
  by Lemma~\ref{Lem:h^B-hitting-h}.
  Hence we obtain
  \begin{align}
    n(T_a < T_0 < \infty) = \frac{1}{h^B(a)} - \kappa.
    \label{eq:n-T_a-T_0-inf-trans}
  \end{align}
  We use the strong Markov property of the excursion measure \(n\)
  (see, e.g.,~\cite[Theorem III.3.28]{MR1138461})
  to obtain
  \begin{align}
    n(T_a < T_0 < \infty)
    = n(\bP_a(T_0 <\infty); T_a<T_0)
    = \rbra{1-\kappa h(a)} n(T_a<T_0).
    \label{eq:n-T_a-T_0}
  \end{align}
  It follows from~\eqref{eq:n-T_a-T_0-inf} and~\eqref{eq:n-T_a-T_0}
  that
  \begin{align}
    n(T_a<T_0)
    = \frac{1-\kappa h^B(a)}{h^B(a)(1-\kappa h(a))}
    = \frac{1-\kappa h(-a)}{h^B(a)}.
  \end{align}
  Hence the proof is complete.
\end{proof}

\subsection{Penalization result in the transient case}
Let \(f\) be a non-negative
function on \([0,\infty)\)
which satisfies
\(\int_0^\infty \ce^{-\kappa u}f(u)\, \cd u <\infty\).
For \(x \in \mathbb{R}\), we introduce the process given by
\begin{align}\label{eq:def-mart-trans}
  M_t= M_t^{(f)}= h(X_t)f(L_t)+(1-\kappa h(X_t))
  \int_0^\infty
  \ce^{-\kappa u} f(L_t+u)\,\cd u.
\end{align}
Then we can show that \((M_t,t\ge 0)\) is a non-negative martingale.
\begin{Thm}\label{Thm:transient-infty-mart}
  Let \(x \in \mathbb{R}\) and let \(f\) be a non-negative
  function on \([0,\infty)\)
  which satisfies
  \(\int_0^\infty \ce^{-\kappa u}f(u)\, \cd u <\infty\).
  Then it holds that
  \begin{align}\label{eq:L-inf-M-trans}
    \bP_x\sbra{f(L_\infty)}
    = \kappa M_0,\quad \bP_x\text{-a.s.,}
  \end{align}
  and \((M_t,t\ge 0)\) is a non-negative \(((\cF_t),\bP_x)\)-martingale.
\end{Thm}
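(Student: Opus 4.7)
The plan is to derive~\eqref{eq:L-inf-M-trans} by an explicit computation of the law of $L_\infty$ under $\bP_x$, and then deduce the martingale property by applying the Markov property to the \emph{fixed} integrable random variable $f(L_\infty)$. The key idea is that in the transient case, $L_\infty < \infty$ almost surely, so $\bP_x[f(L_\infty)\mid \cF_t]$ is automatically a martingale; what must be done is to identify this conditional expectation with $\kappa M_t$.

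\textbf{Step 1: Compute $\bP_0[f(L_\infty)]$.} The plan is to show that $L_\infty$ is exponentially distributed with parameter $\kappa$ under $\bP_0$. Since $X$ is transient, $L$ stops growing after the last visit to $0$, so $L_\infty = \sup\{l \ge 0 \colon \eta_l < \infty\}$. Because $\eta$ is a (possibly killed) subordinator with $\bP_0[\ce^{-q\eta_l}] = \ce^{-l/r_q(0)}$, letting $q \to 0+$ and using~\eqref{eq:kappa} gives $\bP_0(\eta_l < \infty) = \ce^{-l \kappa}$, i.e.,
\begin{align}
\bP_0[f(L_\infty)] = \kappa \int_0^\infty \ce^{-\kappa u} f(u)\,\cd u.
\end{align}

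\textbf{Step 2: Extend to $\bP_x$.} Next I would split according to whether $X$ hits $0$. On $\{T_0 = \infty\}$ we have $L_t \equiv 0$, hence $f(L_\infty) = f(0)$; on $\{T_0 < \infty\}$, the strong Markov property at $T_0$ together with Step~1 gives the expectation of $f(L_\infty)$ restricted to that event. Using $\bP_x(T_0 = \infty) = \kappa h(x)$ (which is exactly the definition of $h$ in the transient case), one obtains
\begin{align}
\bP_x[f(L_\infty)] = \kappa h(x) f(0) + (1-\kappa h(x))\,\kappa \int_0^\infty \ce^{-\kappa u} f(u)\,\cd u = \kappa M_0,
\end{align}
which is~\eqref{eq:L-inf-M-trans}. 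The integrability assumption on $f$ guarantees that $f(L_\infty) \in \cL^1(\bP_x)$.

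\textbf{Step 3: Martingale property.} Applying the Markov property at time $t$ with $L_\infty = L_t + \widetilde{L}_\infty$, where $\widetilde{L}$ is the local time of an independent copy of $X$ started at $X_t$, I would write
\begin{align}
\bP_x[f(L_\infty) \mid \cF_t] = \widetilde{\bP}_{X_t}[f(L_t + \widetilde{L}_\infty)].
\end{align}
Applying~\eqref{eq:L-inf-M-trans} to the shifted non-negative function $g(u) = f(L_t + u)$ (whose integrability against $\ce^{-\kappa u}$ is immediate since $f$ satisfies that integrability) yields that the right-hand side equals exactly $\kappa M_t$. Hence $(\kappa M_t, t\ge 0)$, being a version of $\bP_x[f(L_\infty)\mid \cF_t]$, is a uniformly integrable martingale. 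Non-negativity of $M_t$ follows since $f\ge 0$ and $\kappa h(X_t) = \bP_{X_t}(T_0 = \infty) \in [0,1]$.

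The only non-routine step is Step~1: verifying that $L_\infty$ is exponential with rate $\kappa$ under $\bP_0$. This follows from standard excursion-theoretic arguments (the Poisson point process of excursions has rate $\kappa$ of producing an excursion with $T_0 = \infty$), so I expect no serious obstacle, just a short verification via the Laplace transform of $\eta_l$.
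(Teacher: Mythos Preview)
Your proof is correct and proceeds by a genuinely different route from the paper's. The paper reuses the exponential-clock identity (Lemma~\ref{Lem:expect-L_e_q}, which remains valid in the transient case) and passes to the limit $q\to 0+$: one shows $f(L_{\bm{e}_q})\to f(L_\infty)$ $\bP_x$-a.s.\ using that the last zero $g<\infty$ a.s., applies dominated convergence (first for bounded $f$, then removes the boundedness by monotone approximation), and reads off~\eqref{eq:L-inf-M-trans} from the $q\to 0+$ limit of~\eqref{eq:lem-expect-L_e_q}. Your argument is more direct: you identify the law of $L_\infty$ under $\bP_0$ as exponential with rate~$\kappa$ straight from the killed-subordinator structure of $\eta$, then lift to $\bP_x$ by the strong Markov property at $T_0$ and the relation $\bP_x(T_0=\infty)=\kappa h(x)$. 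This avoids the exponential-clock detour and the truncation step, and it makes the probabilistic content (that $L_\infty$ is the lifetime of a rate-$\kappa$ killed subordinator) completely explicit. Both proofs derive the martingale property in the same way, by recognising $\kappa M_t$ as a version of $\bP_x[f(L_\infty)\mid\cF_t]$.
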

\begin{proof}
  By the same discussion, Lemma~\ref{Lem:expect-L_e_q} also holds
  in the transient case.
  Suppose first that \(f\) is bounded.
  We write \(g = \sup\{t\colon X_t=0\}\).
  Since \(X\) is transient, \(\bP_x(g < \infty)=1\).
  We see that \(f(L_{\bm{e}_q})\to f(L_\infty)\), \(\bP_x\)-a.s.\ as \(q\to 0+\); in fact,
  for almost every sample path,
 \(L_{\bm{e}_q} = L_g = L_\infty\) for small \(q>0\)
 (Here we do not need continuity of \(f\)).
  Hence, by the dominated convergence theorem, we obtain
  \begin{align}
    \bP_x\sbra{f(L_{\bm{e}_q})} \longrightarrow \bP_x\sbra{f(L_\infty)},
    \qquad \text{as \(q \to 0+\).}
  \end{align}
  On the other hand, by the monotone convergence theorem, we obtain
  \begin{align}
    \int_0^\infty \ce^{-u/{r_q(0)}}f(u) \, \cd u
    \longrightarrow \int_0^\infty \ce^{-\kappa u} f(u)\, \cd u,
    \qquad  \text{as } q \to {0+}.
  \end{align}
  Hence~\eqref{eq:L-inf-M-trans} follows by letting \(q \to 0+\)
  in~\eqref{eq:lem-expect-L_e_q}.
  To remove the boundedness assumption of \(f\),
  we consider \(f \wedge n\) and then let \(n \to \infty\) in~\eqref{eq:L-inf-M-trans}.
  Moreover, by the Markov property and the additivity of \(L\), we have,
  for \(0<s<t\),
  \begin{align}
    \bP_x\sbra{M_t|\cF_s}
     & =\kappa^{-1}
    \bP_x\sbra[\big]{\widetilde{\bP}_{X_t}\sbra{f(\widetilde{L}_\infty+L_t)}|\cF_s}
    \\
     & =\kappa^{-1}\bP_x\sbra[\big]{{\bP}_{x}\sbra{f({L}_\infty)|\cF_t}|\cF_s}
    \\
     & = \kappa^{-1}{\bP}_{x}\sbra{f({L}_\infty)|\cF_s}
    \\
     & = \kappa^{-1}\widetilde{\bP}_{X_s}\sbra{f(\widetilde{L}_\infty+L_s)}    \\
     & = M_s.
  \end{align}
  This means that \((M_t,t\ge 0)\) is a non-negative \(((\cF_t),\bP_x)\)-martingale.
\end{proof}

\begin{Thm}\label{Thm:penal-trans}
  Suppose that
  the conditions~\ref{item:cond-not-CPP} and~\ref{item:cond-regular} hold.
  Let \(f\) be a bounded non-negative function
  and let \(\tau\) be a random clock. Define
  \begin{align}
    M_t^\tau & = \kappa^{-1}\bP_x\sbra{ f(L_\tau)|\cF_t}.
  \end{align}
  Then it holds that
  \begin{align}
    M_t^\tau \longrightarrow M_t ,
    \quad \text{\(\bP_x\)-a.s.\ and in \(\cL^1(\bP_x)\) as \(\tau\to\infty\).}
  \end{align}
  Consequently, if \(M_0>0\) under \(\bP_x\),
  it holds that
  \begin{align}\label{eq:penalized-meas-trans}
    \frac{\bP_x\sbra{F_t f(L_{\tau})}}{\bP_x\sbra{f(L_{\tau})}}
    \longrightarrow \bP_x\sbra*{F_t \frac{M_t}{M_0}},
    \qquad \text{as \(\tau \to \infty\),}
  \end{align}
  for all bounded \(\cF_t\)-measurable functionals \(F_t\).
\end{Thm}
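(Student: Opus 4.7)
The plan is to exploit the finiteness of $L_\infty$ in the transient case together with the martingale identification that was already established inside the proof of Theorem~\ref{Thm:transient-infty-mart}. The crucial point is that the transience of $X$ collapses the limit $\tau\to\infty$ to the fixed (finite) random variable $L_\infty$.

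First I would observe that, since $X$ is transient, the last visit to $0$, namely $g=\sup\{t\colon X_t=0\}$, is $\bP_x$-a.s.\ finite, and $L_t=L_g=L_\infty$ for every $t\ge g$. Consequently, for each of the clock families considered in this paper (exponential, hitting time, two-point hitting time, and inverse local time), one has $L_\tau\to L_\infty$ $\bP_x$-a.s.\ as $\tau\to\infty$, because eventually $\tau>g$ and then $L_\tau=L_\infty$ exactly (so no continuity of $f$ at $L_\infty$ is needed). Since $f$ is bounded, the conditional dominated convergence theorem yields
\begin{align}
  M_t^\tau = \kappa^{-1}\bP_x\sbra{f(L_\tau)|\cF_t}
  \longrightarrow \kappa^{-1}\bP_x\sbra{f(L_\infty)|\cF_t},
  \qquad \bP_x\text{-a.s.}
\end{align}
The limit is identified with $M_t$ via Theorem~\ref{Thm:transient-infty-mart}: indeed, applying the Markov property and~\eqref{eq:L-inf-M-trans} to the process started at $X_t$ with local-time offset $L_t$ gives $\bP_x\sbra{f(L_\infty)|\cF_t}=\widetilde{\bP}_{X_t}\sbra{f(L_t+\widetilde{L}_\infty)}=\kappa M_t$, which is precisely the computation used in the martingale verification there.

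For the $\cL^1(\bP_x)$ convergence, the bound $0\le M_t^\tau\le \kappa^{-1}\norm{f}_\infty$, which is uniform in $\tau$, allows the unconditional dominated convergence theorem to upgrade the a.s.\ convergence to $\cL^1$. For the consequence, write $\bP_x\sbra{F_t f(L_\tau)}=\kappa\bP_x\sbra{F_t M_t^\tau}$ for any bounded $\cF_t$-measurable $F_t$; the $\cL^1$ convergence then gives $\bP_x\sbra{F_t f(L_\tau)}\to \kappa \bP_x\sbra{F_t M_t}$. Taking $F_t\equiv 1$ and using the martingale property $\bP_x\sbra{M_t}=M_0$ shows the denominator tends to $\kappa M_0>0$, and dividing yields~\eqref{eq:penalized-meas-trans}.

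There is no serious obstacle: once transience is exploited to reduce $L_\tau$ to $L_\infty$ and the boundedness of $f$ is invoked, everything is dominated convergence, with the nontrivial identification of the limit already handled in Theorem~\ref{Thm:transient-infty-mart}. This stands in sharp contrast to the recurrent case, where $L_\infty=\infty$ forces one to use the delicate tools involving the renormalized zero resolvent $h$ and the different clock-dependent versions $h^{(\gamma)}$.
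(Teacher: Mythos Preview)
Your proposal is correct and follows essentially the same route as the paper's own proof: both exploit that transience forces $g<\infty$ so that $L_\tau=L_\infty$ for large $\tau$, then invoke boundedness of $f$ and dominated convergence to pass to the limit inside the conditional expectation, with the identification $\kappa^{-1}\bP_x\sbra{f(L_\infty)\mid\cF_t}=M_t$ coming from Theorem~\ref{Thm:transient-infty-mart}. Your write-up is in fact more explicit than the paper's (which compresses the argument into three sentences), particularly in spelling out the identification of the limit and the derivation of~\eqref{eq:penalized-meas-trans}.
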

\begin{proof}
  We have
  \(f(L_\tau)\to f(L_\infty)\), \(\bP_x\)-a.s.
  as \(\tau\to\infty\); in fact,
  \(L_\tau=L_g=L_\infty\) for large \(\tau\).
  In addition, since \(f\) is bounded,
  we may apply the dominated convergence theorem to obtain
  \(f(L_\tau)\to f(L_\infty)\), in
  \(\cL^1(\bP_x)\) as \(\tau\to\infty\).
  This implies that \(M_t^\tau \to M_t\), \(\bP_x\)-a.s.\ and in
  \(\cL^1(\bP_x)\) as \(\tau\to\infty\).
\end{proof}
\begin{Rem}
  If \(\tau\) is exponential clock,
  hitting clock, two-point hitting time clock or
  inverse local time clock, Theorem~\ref{Thm:penal-trans}
  also holds under the assumption that
  \(f\) is a non-negative
  function which satisfies
  \(\int_0^\infty \ce^{-\kappa u}f(u)\, \cd u <\infty\).
\end{Rem}

Since we have \(M_t = \bP_x\sbra{f(L_\infty)|\cF_t}\),
we see that the penalized measure \(\bQ^f_x\) can be represented as
\begin{align}
  \bQ^f_x = \frac{f(L_\infty)}{\bP_x\sbra{f(L_\infty)}} \cdot \bP_x,
\end{align}
which shows that \(\bQ^f_x\) is absolutely continuous with respect to \(\bP_x\).

\section{Appendix: Martingale property of \(X_t f(L_t)\)}\label{Sec:mart}
In Remark~\ref{Rem:martingale}, we have shown that
\((X_t f(L_t),t\ge 0)\) is a \(((\cF_t),\bP_x)\)-martingale
for \(f\in\cL^1_{+}\) under the condition that \(m^2<\infty\).
Let us remove the additional assumption \(m^2<\infty\).
In this section, we assume \(X\) is either recurrent or transient,
and assume the conditions~\ref{item:cond-not-CPP} and~\ref{item:cond-regular}.
\begin{Thm}\label{Thm:mart-XfL}
  Suppose the conditions~\ref{item:cond-not-CPP} and~\ref{item:cond-regular} hold.
  Suppose, in addition, that \(\bP\sbra{\abs{X_1}}<\infty\) and \(\bP[X_1] = 0\).
  Then the following assertions hold:
  \begin{enumerate}
    \item \(\bP\sbra{\abs{X_{\bm{e}_q}}}<\infty\) and
          \(n\sbra{\abs{X_{\bm{e}_q}};T_0>\bm{e}_q}<\infty\)
          for all \(q>0\), and \(\bP\sbra{\abs{X_t}}<\infty\)
          and \(n\sbra{\abs{X_t};T_0>t}<\infty\)
          for all \(t>0\);\label{Thm-item:pn-integ}
    \item  \(\bP_x\sbra{X_t; T_0>t} = x\), for all \(t>0\)
          and \(x\in\bR\);\label{Thm-item:p-kill-expect}
    \item  \(n\sbra{X_t;T_0>t}=0\), for all \(t>0\);\label{Thm-item:n-expect}
    \item  \((X_t f(L_t), t\ge 0 )\) is a
          \(((\cF_t), \bP_x)\)-martingale
          for \(x \in \bR\) and all bounded measurable functions
          \(f\).\label{Thm-item:mart-XfL}
  \end{enumerate}
\end{Thm}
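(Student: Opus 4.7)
The plan is to prove (i)--(iv) in order using the compensation formula (Lemma~\ref{Lem:compensation-formula}) together with the strong Markov property at $T_0$. For (i), the hypothesis $\bP\sbra{\abs{X_1}}<\infty$ implies that $\varPsi$ is of class $C^1$ at the origin, so $\bP\sbra{\abs{X_t}}<\infty$ for every $t>0$; since $(X_t)$ is a martingale (as $\bP\sbra{X_1}=0$), the i.i.d.\ increment structure yields a linear bound $\bP\sbra{\abs{X_t}}\le C(1+t)$, and hence $\bP\sbra{\abs{X_{\bm{e}_q}}}<\infty$. The compensation formula applied to the predictable functional $F(t,\omega,e)=qe^{-qt}\int_0^{T_0(e)}\abs{e(u)}e^{-qu}\,\cd u$ then gives
\begin{align}
r_q(0)\int_0^\infty qe^{-qt}\,n\sbra{\abs{X_t};T_0>t}\,\cd t=\bP\sbra{\abs{X_{\bm{e}_q}}}<\infty,
\end{align}
so $n\sbra{\abs{X_t};T_0>t}<\infty$ for almost every $t>0$; the extension to every $t>0$ follows from the Markov property of $n$ at a small $s$ together with the bound $\bP_y\sbra{\abs{X_{t-s}}}\le\abs{y}+\bP\sbra{\abs{X_{t-s}}}$.

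Statement (ii) is immediate: translation invariance gives $\bP_y\sbra{X_t}=y$, and the strong Markov property at $T_0$ combined with $\bP_0\sbra{X_s}=0$ for every $s\ge 0$ yields
\begin{align}
\bP_y\sbra{X_t;T_0\le t}=\bP_y\sbra{\bP_0\sbra{X_{t-T_0}};T_0\le t}=0,
\end{align}
whence (ii) by subtraction. For (iii), observe that $\bP\sbra{X_{\bm{e}_q}}=\int_0^\infty qe^{-qt}\bP\sbra{X_t}\,\cd t=0$; since $X_s=0$ outside the excursion intervals, the compensation formula applied to $F(t,\omega,e)=qe^{-qt}\int_0^{T_0(e)}e(u)e^{-qu}\,\cd u$---whose integrability is furnished by (i)---identifies
\begin{align}
0=\bP\sbra{X_{\bm{e}_q}}=r_q(0)\int_0^\infty qe^{-qt}\,n\sbra{X_t;T_0>t}\,\cd t.
\end{align}
Uniqueness of the Laplace transform gives $n\sbra{X_t;T_0>t}=0$ for almost every $t>0$, and the Markov property of $n$ at $s\in(0,t)$ together with (ii) upgrades this to every $t>0$ via
\begin{align}
n\sbra{X_t;T_0>t}=n\sbra{\bP_{X_s}\sbra{X_{t-s};T_0>t-s};T_0>s}=n\sbra{X_s;T_0>s},
\end{align}
which shows that $t\mapsto n\sbra{X_t;T_0>t}$ is constant and hence identically zero.

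For (iv), the boundedness of $f$ combined with part (i) ensures $X_tf(L_t)\in\cL^1(\bP_x)$. The simple Markov property at $s\le t$ reduces the martingale identity $\bP_x\sbra{X_tf(L_t)\mid\cF_s}=X_sf(L_s)$ to the claim
\begin{align}
\bP_y\sbra{X_ug(L_u)}=yg(0)\qquad \text{for all } y\in\bR,\ u>0,\ g\ \text{bounded measurable}.
\end{align}
Splitting on $\cbra{T_0>u}$ and $\cbra{T_0\le u}$, part (ii) handles the first piece as $g(0)\bP_y\sbra{X_u;T_0>u}=yg(0)$; the strong Markov property at $T_0$ (using $X_{T_0}=L_{T_0}=0$) reduces the second piece to showing that $\phi(v)\coloneqq\bP_0\sbra{X_vg(L_v)}=0$ for every $v>0$. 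Since $X_v$ vanishes off the excursion intervals, the compensation formula yields
\begin{align}
\phi(v)=\bP\sbra*{\int_0^v g(L_t)\,n\sbra{X_{v-t};T_0>v-t}\,\cd L_t}=0
\end{align}
by (iii), which completes the proof. The main obstacle will be the extension of the almost-everywhere statements in (i) and (iii) to every $t>0$ by means of the Markov property under the excursion measure $n$; once this is in hand, the remaining identities are algebraic consequences of the compensation formula, the strong Markov property at $T_0$, and the vanishing of $\bP\sbra{X_t}$.
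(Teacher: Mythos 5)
Your proofs of (i)--(iii) track the paper's closely: same subadditivity/linear-growth bound for \(\bP\sbra{\abs{X_t}}\), same compensation-formula computation of \(\bP\sbra{\abs{X_{\bm e_q}}}\) to deduce the a.e.\ finiteness of \(n\sbra{\abs{X_t};T_0>t}\) and then upgrade to every \(t\) via the Markov property of \(n\), same strong-Markov decomposition of \(\bP_x\sbra{X_t}\) for (ii), and the same constancy-in-\(t\) argument via \(n\sbra{X_t;T_0>t}=n\sbra{X_s;T_0>s}\) for (iii).

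For (iv) you take a genuinely different and somewhat cleaner route. The paper computes \(\bP_x\sbra{X_{\bm e_q}f(L_{\bm e_q})}=xf(0)\) via the compensation formula at an exponential time, deduces \(\bP_x\sbra{X_tf(L_t)}=xf(0)\) for almost every \(t\), and then upgrades this to \emph{every} \(t\) by a right-continuity argument: one must check that \(t\mapsto\bP_x\sbra{X_tf(L_t)}\) is right-continuous, which uses \(\bP_x(X_t\ne 0)=1\), local constancy of \(L\) on a right neighbourhood of \(t\), right-continuity of sample paths, and dominated convergence via the Doob-type bound \(\bP\sbra{\sup_{s\le t}\abs{X_s}}\le 8\bP\sbra{\abs{X_t}}\). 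You instead reduce the martingale identity by the simple Markov property to the single claim \(\bP_y\sbra{X_u g(L_u)}=yg(0)\), split at \(T_0\), dispatch the pre-\(T_0\) piece with (ii), and apply the compensation formula directly at the \emph{fixed} time \(u\) to the post-\(T_0\) piece \(\phi(v)=\bP_0\sbra{X_v g(L_v)}\), identifying it as \(\bP_0\bigl[\int_0^v g(L_t)\,n\sbra{X_{v-t};T_0>v-t}\,\cd L_t\bigr]=0\) by (iii). This bypasses the a.e.-to-everywhere step and the continuity argument entirely. The trade-off is that you use the compensation formula at a deterministic time rather than an exponential time, which requires a slightly more careful check of predictability of the functional \(F(t,\omega,e)=\bm 1_{\{t\le v<t+T_0(e)\}}\,e(v-t)\,g(L_t(\omega))\); this is standard but worth stating. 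Both routes are correct.
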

\begin{proof}
  \noindent (\ref{Thm-item:pn-integ})
  We have \(\bP\sbra{\abs{X_{t+s}}}\le
  \bP\sbra{\abs{X_t}}+\bP\sbra{\abs{X_{t+s}-X_t}}
  =\bP\sbra{\abs{X_t}}+
  \bP\sbra{\abs{X_{s}}}\),
  which implies that the function \(t\mapsto \bP\sbra{\abs{X_t}}\) is subadditive.
  Hence, for any \(k\in \bN\),
  we have
  \(\bP\sbra{\abs{X_k}}\le k\bP\sbra{\abs{X_1}}<\infty\).
  For \(t>0\), it is known that
  \begin{align}\label{eq:sup-above}
    \bP\sbra*{\sup_{0\le s\le t}\abs{X_s}}\le 8\bP\sbra{\abs{X_t}};
  \end{align}
  see Doob~\cite[Theorem VII.5.1]{MR0058896} and
  Sato~\cite[Theorem 25.18 and Remark 25.19]{MR1739520}
  for the proof. Hence we have \(\sup_{0\le t\le k} \bP\sbra{\abs{X_t}}<\infty\)
  for all \(k\in\bN\).
  In particular, we obtain \(\bP\sbra{\abs{X_t}}<\infty\) for all \(t>0\).
  Again by the subadditivity of \(t\mapsto \bP\sbra{\abs{X_t}}\), we have
  \begin{align}
    \lim_{t\to\infty}\frac{\bP\sbra{\abs{X_t}}}{t}=
    \inf_{t>0}\frac{\bP\sbra{\abs{X_t}}}{t}
    \le\bP\sbra{\abs{X_1}}<\infty.
  \end{align}
  Thus there exist constants \(C,C'>0\) such that \(\bP\sbra{\abs{X_t}}\le C+C't\)
  for all \(t>0\).
  In particular, we obtain
  \begin{align}
    \bP\sbra{\abs{X_{\bm{e}_q}}}\le
    q\int_0^\infty (C+C't)\ce^{-qt} \,\cd t<\infty,
    \quad \text{for all \(q>0\).}
  \end{align}
  By Lemma~\ref{Lem:compensation-formula}, we have
  \begin{align}
    \bP\sbra{\abs{X_{\bm{e}_q}}}
     & = \bP\sbra*{\sum_{u\in D}\int_{\eta_{u-}}^{\eta_u}q\ce^{-qt} \abs{X_t}
    \, \cd t}                                                                   \\
     & = \bP \otimes \widetilde{n}
    \sbra*{\int_0^\infty \cd L_u \, q\ce^{-qu}
    \int_0^{\widetilde{T}_0} \cd t\, \ce^{-qt}\abs{\widetilde{X}_t}}            \\
     & = \bP\sbra*{\int_0^\infty \cd L_u \, \ce^{-qu}}
    n\sbra{\abs{X_{\bm{e}_q}}; T_0>\bm{e}_q}
    \\
     & = r_q(0) n\sbra{\abs{X_{\bm{e}_q}}; T_0>\bm{e}_q}.
  \end{align}
  Hence we have \(n\sbra{\abs{X_{\bm{e}_q}}; T_0>\bm{e}_q}<\infty\) for all \(q>0\)
  and this implies that \(n\sbra{\abs{X_{t}};T_0>t}<\infty\)
  for almost all \(t>0\).
  For any \(t>0\), we can take \(0<s<t\) such that
  \(n\sbra{\abs{X_s};T_0>s}<\infty\).
  Then it follows from the Markov property of the excursion measure \(n\)
  that
  \begin{align}
    n\sbra{\abs{X_t};T_0>t}
     & = n\sbra{\bP_{X_s}\sbra{\abs{X_{t-s}}; T_0>t-s}; T_0>s}        \\
     & \le  n\sbra{\bP_{X_s}\sbra{\abs{X_{t-s}}}; T_0>s}              \\
     & \le n\sbra{\abs{X_s}+\bP\sbra{\abs{X_{t-s}}}; T_0>s}<\infty.
  \end{align}
  Hence we obtain \(n\sbra{\abs{X_t};T_0>t}<\infty\) for all \(t>0\).

  \noindent (\ref{Thm-item:p-kill-expect})
  By the Markov property, we have
  \begin{align}
    \bP_x\sbra{X_t}
    = \bP_x\sbra{X_t; T_0>t}
    +\int_{[0,t]}\bP_x(T_0\in\cd s)\bP\sbra{X_{t-s}}.
  \end{align}
  Since \(\bP_x\sbra{X_t}=x\) for all \(t\ge 0\) and \(x\in\bR\), we obtain
  \(\bP_x\sbra{X_t; T_0>t}=x \) for all \(t>0\).

  \noindent (\ref{Thm-item:n-expect})
  By Lemma~\ref{Lem:compensation-formula},
  we have
  \begin{align}
    \bP\sbra{X_{\bm{e}_q} f(L_{\bm{e}_q})}
     & = \bP \otimes \widetilde{n}
    \sbra*{\int_0^\infty \cd L_u \, q\ce^{-qu} f(u)
    \int_0^{\widetilde{T}_0} \cd t\, \ce^{-qt}\widetilde{X}_t}  \\
     & = \bP\sbra*{\int_0^\infty \cd L_u \, q \ce^{-qu} f(u)}
    \int_0^\infty \cd t \, \ce^{-qt} n\sbra{X_t;T_0>t}.
    \label{eq:X_eqL_eq}
  \end{align}
  Here in the second equality, we use Fubini's theorem.
  If we take \(f \equiv 1\),
  then~\eqref{eq:X_eqL_eq} becomes
  \begin{align}
    \bP\sbra{X_{\bm{e}_q}}
    = qr_q(0)
    \int_0^\infty \cd t \, \ce^{-qt} n\sbra{X_t;T_0>t}.
  \end{align}
  Since \(\bP\sbra{X_{\bm{e}_q}} = 0\) for all \(q > 0\),
  we obtain \(n\sbra{X_t;T_0>t} = 0\)
  for almost all \(t>0\).
  By the Markov property of the excursion measure \(n\),
  we have, for \(0<s<t\),
  \begin{align}
    n\sbra{X_t;T_0>t} = n\sbra{\bP_{X_s}\sbra{X_{t-s}; T_0>t-s};T_0>s}
    = n\sbra{X_s;T_0>s},
  \end{align}
  which implies that \(n\sbra{X_t;T_0>t}\) is constant in \(t>0\).
  Thus we obtain \(n\sbra{X_t;T_0>t}=0\) for all \(t>0\).

  \noindent (\ref{Thm-item:mart-XfL})
  By~(\ref{Thm-item:n-expect}) of
  Theorem~\ref{Thm:mart-XfL} and by~\eqref{eq:X_eqL_eq},
  we have
  \(\bP\sbra{X_{\bm{e}_q}f(L_{\bm{e}_q})} = 0\).
  Hence, by the Markov property and~(\ref{Thm-item:p-kill-expect}) of
  Theorem~\ref{Thm:mart-XfL},
  it holds that,
  \begin{align}
    \bP_x\sbra{X_{\bm{e}_q}f(L_{\bm{e}_q})}
     & =
    \bP_x\sbra*{\int_0^{T_0}q\ce^{-qt} X_t f(0)\, \cd t}
    + \bP_x\sbra{\ce^{-qT_0}}
    \bP\sbra{X_{\bm{e}_q} f(L_{\bm{e}_q})} \\
     & =
    f(0) \int_0^\infty q\ce^{-qt} \bP_x\sbra{X_t ;T_0>t}
    \, \cd t                                 \\
     & =x f(0).
    \label{eq:x-X_eqL_eq}
  \end{align}
  Thus we obtain
  \begin{align}\label{eq:XL-a.e.}
    \bP_x\sbra{X_t f(L_{t})} = xf(0) \quad
    \text{ for almost all \(t > 0\).}
  \end{align}
  We show the function \(t\mapsto \bP_x\sbra{X_t f(L_{t})}\)
  is right-continuous for \(t>0\).
  Fix \(t>0\).
  Since \(X_t\ne 0\), \(\bP_x\)-a.s.,
  we see that, for almost every sample path,
  we can choose small \(\delta>0\) such that
  \(L_{t+\delta}=L_t\). Since \(t\mapsto X_t\) is right-continuous,
  we have
  \(\lim_{s\to 0+}X_{t+s}f(L_{t+s}) = X_t f(L_t) \), \(\bP_x\)-a.s.,
  where we do not require continuity of \(f\).
  For \(0<s<1\), it holds that
  \(\abs{X_{t+s}f(L_{t+s})}\le \sup_{0\le t'\le t+1}
  \abs{X_{t'}}\sup_{u\in\bR}\abs{f(u)}\).
  By~\eqref{eq:sup-above}, we may apply the dominated convergence theorem to deduce
  that
  \(t\mapsto\bP_x\sbra{X_t f(L_{t})} \) is right-continuous.
  This and~\eqref{eq:XL-a.e.} imply that
  \begin{align}
    \bP_x\sbra{X_t f(L_{t})} = xf(0) \quad \text{for all \(t\ge 0\)}.
  \end{align}
  By the Markov property and the additivity property of \(L\),
  the process \((X_t f(L_t), t\ge 0)\) is a
  \(((\cF_t), \bP_x)\)-martingale.
\end{proof}
By Theorems~\ref{Thm:martingale} and~\ref{Thm:mart-XfL}, we obtain the following:
\begin{Cor}
  Suppose that the assumptions of Theorem~\ref{Thm:mart-XfL} are satisfied
  and that \(X\) is recurrent.
  For measurable function \(f_1\) and locally integrable function \(f_2\), define
  \begin{align}
    F(X_t,L_t)=X_t f_1(L_t) + h(X_t)f_2(L_t) -\int_0^{L_t} f_2(u)\,\cd u.
  \end{align}
  Then the following assertions hold.
  \begin{enumerate}
    \item If \(f_1\) is bounded and \(f_2\) is integrable, then
          \((F(X_t,L_t),t\ge 0)\) is a \(((\cF_t),\bP_x)\)-martingale.
    \item If \(f_1\) is locally bounded and \(f_2\) is locally integrable,
          then \((F(X_t,L_t),t\ge 0)\) is a local \(((\cF_t),\bP_x)\)-martingale.
  \end{enumerate}
\end{Cor}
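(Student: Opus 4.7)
The plan is to assemble the martingale $F(X_t,L_t)$ from two building blocks already available in the paper: the martingale $M^{(0,f)}$ of Theorem~\ref{Thm:martingale} (taking $\gamma=0$, so $h^{(0)}=h$) and the martingale $X_tf(L_t)$ of Theorem~\ref{Thm:mart-XfL}(\ref{Thm-item:mart-XfL}). The point is that for $g\in\cL^1_{+}$ one has the trivial rewriting
\begin{align}
\int_0^\infty g(L_t+u)\,\cd u = \int_0^\infty g(v)\,\cd v - \int_0^{L_t} g(v)\,\cd v,
\end{align}
so that $M_t^{(0,g)} - \int_0^\infty g(v)\,\cd v = h(X_t)g(L_t) - \int_0^{L_t} g(v)\,\cd v$ is a $((\cF_t),\bP_x)$-martingale.

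For part (i), decompose $f_2 = f_2^+ - f_2^-$ with $f_2^\pm \in \cL^1_{+}$; integrability of $h(X_t)f_2^\pm(L_t)$ is automatic because $0\le h(X_t)f_2^\pm(L_t)\le M_t^{(0,f_2^\pm)}$ and $\bP_x[M_t^{(0,f_2^\pm)}]=M_0^{(0,f_2^\pm)}<\infty$, while $|\int_0^{L_t} f_2(u)\,\cd u|\le \|f_2\|_{L^1}$. By linearity $h(X_t)f_2(L_t)-\int_0^{L_t}f_2(u)\,\cd u$ is a martingale. Combined with the martingale $X_tf_1(L_t)$ from Theorem~\ref{Thm:mart-XfL}(\ref{Thm-item:mart-XfL}) (valid since $f_1$ is bounded measurable), summation gives that $F(X_t,L_t)$ is a $((\cF_t),\bP_x)$-martingale.

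For part (ii), the standard localization is to set $\tau_n = \inf\{t\colon L_t>n\}$; since $L$ is continuous and $L_t<\infty$ for each $t$, we have $\tau_n\up\infty$ almost surely. Define the truncated coefficients $f_1^{(n)}(u)=f_1(u\wedge n)$ (bounded by local boundedness of $f_1$) and $f_2^{(n)}(u)=f_2(u)1_{[0,n]}(u)$ (integrable by local integrability of $f_2$), and let $F^{(n)}$ be the corresponding functional. On $\{t\le\tau_n\}$ we have $L_t\le n$, so $F^{(n)}(X_t,L_t)=F(X_t,L_t)$, and by part (i) the process $F^{(n)}(X_t,L_t)$ is a martingale. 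Consequently $F(X_{t\wedge\tau_n},L_{t\wedge\tau_n})$ is a stopped martingale, proving $F$ is a local martingale. The only mildly delicate point is the integrability check in part (i), for which the domination by $M_t^{(0,f_2^\pm)}$ above is the crucial input; the rest is bookkeeping.
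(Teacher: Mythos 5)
Your proof is correct and carries out exactly the assembly the paper intends: the paper gives no separate proof for this Corollary, merely remarking that it follows from Theorems~\ref{Thm:martingale} and~\ref{Thm:mart-XfL}, and your argument supplies the missing details. The rewriting of $\int_0^\infty g(L_t+u)\,\cd u$ to isolate the martingale $h(X_t)g(L_t)-\int_0^{L_t}g$, the decomposition $f_2=f_2^+-f_2^-$ with domination by $M^{(0,f_2^\pm)}$ for integrability, and the localization via $\tau_n=\inf\{t\colon L_t>n\}$ together with the truncated coefficients $f_1(u\wedge n)$ and $f_2\,1_{[0,n]}$ are all valid.
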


\begin{Rem}
  For the Brownian motion \(B\) and its local time \(L\) at \(0\),
  Fitzsimmons--Wroblewski~\cite{MR2759773}
  proved that any local martingale of the
  form \((F(B_t, L_t), t\ge 0)\)
  is given by the following form:
  \begin{align}\label{eq:F-mart}
    F(B_t,L_t) & = F(0,0)+B_t f_1(L_t) +\abs{B_t}f_2(L_t) -\int_0^{L_t} f_2(u)\, \cd u,
  \end{align}
  where \(f_1\) and \(f_2\) are locally integrable functions.
\end{Rem}

\end{document}